\newtheorem{theo}{Theorem}[section]
\newtheorem{lemm}[theo]{Lemma}
\newtheorem{defi}[theo]{Definition}
\newtheorem{remark}[theo]{Remark}
\def\x        {\textit {\textbf{x}}}
\numberwithin{equation}{section}
\title{Numerical analysis for a chemotaxis-Navier-Stokes system}
\author[a]{Abelardo Duarte-Rodr\'{\i}guez}
\author[b]{Mar\'{\i}a  A. Rodr\'{\i}guez-Bellido}
\author[a]{Diego A. Rueda-G\'omez}
\author[a]{\'Elder J. Villamizar-Roa\thanks{Corresponding author.
% \href{mailto:lcff@ime.unicamp.br}{lcff@ime.unicamp.br}  (L. C. F. Ferreira),\\
\href{mailto:jvillami@uis.edu.co}{jvillami@uis.edu.co}  (E. J. Villamizar-Roa).}}
\affil[a]{Universidad Industrial de Santander, Escuela de Matem\'{a}ticas, A.A. 678, Bucaramanga, Colombia.}
\affil[b]{Departamento de Ecuaciones Diferenciales y An\'alisis Num\'erico and IMUS, Universidad
de Sevilla, Facultad de Matem\'aticas, C/ Tarfia, S/N, 41012 Sevilla, Spain.}
\DeclareRobustCommand{\uvec}[1]{{%
  \ifcsname uvec#1\endcsname
     \csname uvec#1\endcsname
   \else
    \bm{\hat{\mathbf{#1}}}%
   \fi
}}
\date{}
\begin{document}
\maketitle
%%%%%%%%%%%%%% ABSTRACT %%%%%%%%%%%%%% ABSTRACT %%%%%%%%%%%%%%%%%%%%%%%%%%%%%%%
\begin{abstract}
In this paper we develop a numerical scheme for approximating a $d$-dimensional chemotaxis-Navier-Stokes system, $d=2,3$, modeling cellular swimming in incompressible fluids. This model describes the chemotaxis-fluid interaction in cases where the chemical signal is consumed with a rate proportional to the amount of organisms. We construct numerical approximations based on the Finite Element method and analyze some error estimates and convergence towards weak solutions. In order to construct the numerical scheme, we use a splitting technique to deal with the 
chemo-attraction term in the cell-density equation, leading to introduce a new variable given by the gradient of the chemical concentration. Having the equivalent model, we consider a fully discrete Finite Element approximation which is 
well-posed and it is mass-conservative.  We obtain uniform estimates and analyze the convergence of the scheme. Finally, we present some numerical simulations to verify the 
good behavior of our scheme, as well as to check numerically the error estimates proved in our theoretical analysis.
\vspace{0.3cm}

\noindent{\bf Keywords.} Chemotaxis-Navier-Stokes system, finite elements, convergence rates, error estimates.\vspace{0.3cm}

\noindent{\bf AMS subject classifications.}  {35Q35; 35Q92; 92C17; 65M12; 65M15; 65M60}
\end{abstract}

%%%%%%%%%%%%%  INTRODUCTION %%%%%%%%%%%%%%% INTRODUCTION %%%%%%%%%%%%%%%%%%%%%%
\section{Introduction}
 Chemotaxis is the oriented movement of cells towards the concentration gradient of certain chemicals in their environment. In particular, when the movement of cells is directed towards the increasing concentration of a signal, the phenomenon is known as chemotaxis by atracction. This kind of phenomena, which play an outstanding role in a large range of biological applications, are modeled, in their simplest form, by the Keller-Segel system. However, some experimental studies, as reported in \cite{Hill,tuval2005bacterial}, reveal that the chemotactic motion in liquid environments affects substantially the migration of cells. Some examples of these facts are the phenomenon of broadcast spawning, the pattern generation and spontaneous emergence of turbulence in populations of aerobic bacteria suspended in sessile drops of water \cite{Tao_2015_boun}. This kind of interaction can be modeled through a chemotaxis-Navier-Stokes system. This model is given by the following system of PDEs:
\begin{equation}\label{KNS}
\left\{
\begin{array}{lc}
\eta_{t}+\mathbf{u} \cdot\nabla \eta=D_n\Delta \eta-\chi \nabla\cdot( \eta\nabla c), &\\
c_{t}+\mathbf{u} \cdot\nabla c=D_c\Delta c- \gamma \eta c,  &\\
\rho \left(\mathbf{u}_{t}+(\mathbf{u}\cdot\nabla)\mathbf{u} \right)= D_{\mathbf{u}} \Delta \mathbf{u}-\nabla\pi+ \eta \nabla \phi, &\\
\nabla\cdot \mathbf{u}=0, &
\end{array}
\right.
\end{equation}
where $\eta = \eta(\x, t),$ $c =c(\x, t),$ $\pi(\x,t)$ and $\mathbf{u}(\x,t)$ denote respectively the cell density, the concentration of an attractive chemical signal, the hydrostatic pressure, and the velocity field of the fluid at position $\x\in \Omega\subset\mathbb{R}^d,$ $d=2,3,$ and time $t\in(0,T]$. This model describes the interaction between a type of cells (e.g., bacteria), and a chemical signal 
which is consumed with a rate proportional to the amount of organisms. The cells and chemical substance are transported by a viscous incompressible fluid under the influence of a force due to the aggregation of cells. The equation for the velocity field $\mathbf{u}$ is described by the incompressible Navier-Stokes system with forcing term given by $-\eta \nabla \phi,$ which represents the effects due to density variations caused by cell aggregation. The parameters $\chi,D_n,D_c,\rho$ and $D_{\mathbf{u}}$ are positive constants that represent the chemotactic coefficient, the cell diffusion constant, the chemical diffusion constant, the fluid density and the viscosity of fluid, respectively.\\ 

System (\ref{KNS}) is completed with the following initial and boundary data:
\begin{equation}\label{initialdata}
\left\{
\begin{array}{lc}
\left[\eta(\x,0),c(\x,0),\mathbf{u}(\x,0)\right]=\left[\eta_0(\x),c_0(\x),\mathbf{u}_0(\x)\right],\ \x\in\Omega,\\[.3cm]
\frac{\partial \eta(\x,t)}{\partial \boldsymbol{\nu}}=\frac{\partial c(\x,t)}{\partial \boldsymbol{\nu}}=0, \quad \mathbf{u}(\x,t)=0,\quad \x\in\partial\Omega,\quad t\in(0,T).
\end{array}
\right.
\end{equation}
The mathematical understanding of the existence and uniqueness of a solution for (\ref{KNS})-(\ref{initialdata}) is quite challenging, due the coupling between the Navier-Stokes equations and the chemotaxis system. However, there are several results  of existence, uniqueness, regularity and qualitative properties of the solutions for system (\ref{KNS})-(\ref{initialdata}) and related models (see for instance \cite{Duarte, Jiang,lankeit2016long,winkler2012global, winkler2016global,Winkler3, Zhang} and references therein). In \cite{winkler2012global}, it was proved the existence of global classical solutions in  two-dimensional bounded convex domains. The results of \cite{winkler2012global} were extended to nonconvex domains in \cite{Jiang}. Results of convergence of classical solutions to the corresponding stationary model were analyzed in \cite{Winkler3, Zhang}. In \cite{winkler2016global}, it was proved the existence of global weak solutions in bounded three-dimensional convex domains, considering the Stokes system instead of the Navier-Stokes equations, that is, neglecting the nonlinear term $(\mathbf{u}\cdot \nabla)\mathbf{u}$ in the fluid equation. The existence of global weak solutions for the full three-dimentional  Chemotaxis Navier-Stokes model in bounded convex domains was obtained in \cite{winkler2016global}. In \cite{winkler2016global} the author also proved that any eventual energy solution becomes smooth after some waiting time. In \cite{lankeit2016long} the author proved the existence of weak solutions for general bounded domains of $\mathbb{R}^3$ for the chemotaxis-Navier-Stokes system with logistic source. In the same paper, the author also proved that after some waiting time, the weak solutions become smooth and converge to some steady state. Results of global existence of mild solutions in bounded domains of $\mathbb{R}^d$ , $d=2,3,$ for small initial data in $L^p$-spaces were obtained in \cite{Duarte}. However, to the best of our knowledge, there is no convergence numerical analysis for the solutions of (\ref{KNS})-(\ref{initialdata}). We only know some numerical simulations to investigate the patterns formation and predict numerically the nonlinear dynamic of the chemotaxis-fluid system (see for instance \cite{Chertock,Deleuze, Karimi, Lee, tuval2005bacterial}).\\ 

This paper is focused on the analysis of a numerical method to approximate the solutions of (\ref{KNS})-(\ref{initialdata}). As far as we know, there is no numerical analysis related to the convergence and error estimates of approximations for the weak solutions of (\ref{KNS})-(\ref{initialdata}). Even more, there are only a few works about numerical analysis for Keller-Segel system (i.e., with chemo-attraction and linear production)  and assuming that there is no interaction with the fluid \cite{Epshteyn, Filbet, Marrocco, Saito1, Saito2}. In \cite{Filbet} the author analyzed the existence of discrete solutions and the convergence of a finite volume scheme. In \cite{Saito1, Saito2}, was obtained some error estimates for a conservative Finite Element (FE) approximation. In \cite{Epshteyn} were proved some error estimates for a fully discrete discontinuous FE method. A mixed FE approximation was analyzed in \cite{Marrocco}. On the other hand, in the same framework of Keller-Segel system some previous energy stable numerical schemes have also been analyzed (cf. \cite{Bessemoulin-Chatard}). In addition, unconditionally energy stable time-discrete numerical schemes and fully discrete FE schemes for a chemo-repulsion model with quadratic production has been analyzed in \cite{GMD2,GMD3}. Some unconditionally energy stable fully discrete schemes for a parabolic repulsive-productive chemotaxis model (with linear production term) was recently analyzed in  \cite{GMD1}. In our case, the difficulties to deal with the numerical analysis of (\ref{KNS})-(\ref{initialdata}) come from the strong coupling nonlinear  term $\nabla\cdot(\eta \nabla c)$ and the transport terms $\mathbf{u}\cdot\nabla \eta,$ $\mathbf{u}\cdot\nabla c$ and $\mathbf{u}\cdot\nabla \mathbf{u}.$ Therefore, in order to deal the chemotaxis term $\nabla\cdot(\eta \nabla c)$ we introduce a new variable given by the gradient of the chemical concentration, that is,  $\boldsymbol{\sigma} = \nabla c,$ which allow us to control the strong regularity request by the system; on the other hand, the transport terms have the difficulty that  the corresponding discrete forms do not preserve the same properties of the continuous case. We concentrate our analysis by introducing a splitting mixed finite element method for approximating the weak solutions of  (\ref{KNS})-(\ref{initialdata}) and analyze some error estimates.  This scheme is 
well-posed and it is mass-conservative.  We obtain uniform estimates and analyze the convergence towards weak solutions. We also present some numerical simulations in order to validity the numerical results.\\ 

The plan of this paper is as follows. In Section 2, we first establish some basic notations and recall some existence and uniqueness results of (\ref{KNS})-(\ref{initialdata}) in the continuous case. We also define the weak formulation of (\ref{KNS})-(\ref{initialdata}), which will be used to construct the numerical approximation. In Section 3, we construct  a mass-conservative numerical approximation for the weak solutions of the chemotaxis-fluid system (\ref{KNS}) with initial and boundary data (\ref{initialdata}), by using FE approximations in space and finite differences in time, and give some preliminary results concerning to the FE spaces. In Section 4, we analyze the well-posedness and the mass-conservation property (see (\ref{mass01b}) below) for the numerical scheme; and following an inductive procedure, in Section 5, we obtain some uniform estimates for any solution of scheme (\ref{scheme1}), subsequently required in the convergence analysis. The analysis of Section 5 is focused on the two-dimensional case. In Section 6, we make some comments about the extensions of these results to the three-dimensional case. Finally, in Section 7, we provide some numerical simulations in agreement with the numerical results.
%%%%%%%%%%%%%%%%%%%%%%%%%%%%%%%%%%%%%%%%%%%%%%%%%%%%%%%%%%%%%%%%%%%
%%%%%%%%%%%%%%%%%%%%%%%%%%%%%%%%%%%%%%%%%%%%%%%%%%%%%%%%%%%%%%%%%%%
\section{The continuous problem}
In this section, we establish a variational formulation  of (\ref{KNS})-(\ref{initialdata}) which will be used to construct the numerical scheme. In order to deal with the chemotaxis term in the cell density equation, we introduce a splitting variational formulation for the chemoatractant concentration, which yields to consider an auxiliar variable representing the gradient of the chemical signal. In this section, after to establish the basic notation which will be used, we recall  some existence results for two and three dimensional bounded domains, and define a variational formulation of the continuous problem.

 \subsection{Notations}
We consider the standard Sobolev and Lebesgue spaces $W^{k,p}(\Omega)$ and $L^p(\Omega),$ with respective norms $\Vert \cdot\Vert_{W^{k,p}}$ and $\Vert \cdot\Vert_{L^p}.$ In particular, we denote $W^{k,2}(\Omega)=H^k(\Omega).$ Also, $W_0^{1,p}(\Omega)$ denotes the elements of $W^{1,p}(\Omega)$ with trace zero on the boundary of $\Omega.$  The $L^2(\Omega)$-inner product will be represented by $(\cdot,\cdot).$ Corresponding Sobolev spaces of vector valued functions will be denoted by ${\bf W}^{k,p} (\Omega),$ ${\bf L}^{p} (\Omega),$ and so on.  Also, we will use the following function spaces
\begin{eqnarray*}%\label{spaceH}
\mathbf{V}&:=&\{\mathbf{v} \in \mathbf{H}^1_0(\Omega):\nabla\cdot \mathbf{v}=0 \: \text{ in} \: \Omega\},\\
\mathbf{H}^{1}_{\sigma}(\Omega)&:=&\{\mathbf{v}\in \mathbf{H}^{1}(\Omega): \mathbf{v}\cdot \boldsymbol{\nu}=0 \mbox{ on } \partial\Omega\},\\
{L}_0^2(\Omega)&:=& \left\{ p \in L^2(\Omega): \int_{\Omega}p =0 \right\},\\
\widehat{H}^1(\Omega)&:=&H^1(\Omega) \cap L^2_0(\Omega)=\left\{ w \in H^1(\Omega): \int_{\Omega}w =0 \right\},
\end{eqnarray*}
where $\boldsymbol{\nu}$ denotes the unit outward normal vector to the boundary. We will use the following equivalent norms in $H^1(\Omega)$  and ${\bf H}_{\sigma}^1(\Omega)$, respectively (see \cite{necas} and \cite[Corollary 3.5]{Nour}):
\begin{equation}\label{EQu}
\Vert u \Vert_{H^1}^2=\Vert \nabla u\Vert_{0}^2 + \left( \int_\Omega u\right)^2, \ \ \forall u\in H^1(\Omega),
\end{equation}
\begin{equation}\label{EQs}
\Vert {\boldsymbol\sigma} \Vert_{H^1}^2=\Vert {\boldsymbol\sigma}\Vert_{0}^2 + \Vert \mbox{rot }{\boldsymbol\sigma}\Vert_0^2 + \Vert \nabla \cdot {\boldsymbol\sigma}\Vert_0^2, \ \ \forall {\boldsymbol\sigma}\in \mathbf{H}^{1}_{\sigma}(\Omega).
\end{equation}
We recall the Poincar\'e inequality 
\begin{equation}\label{PIa}
\|\mathbf{v}\|_{{H}^1} \leq C_{P}\|\nabla \mathbf{v}\|_{{L}^2}, \ \ \forall \mathbf{v} \in \mathbf{H}^1_0(\Omega),
\end{equation}
for some constant $C_{P}>0$ independent of $\mathbf{v}$. Also, we will use the classical interpolations inequalities
\begin{equation}\label{in2D}
\Vert u\Vert_{L^4}\leq C\Vert u\Vert_{L^2}^{1/2}\Vert u\Vert_{H^1}^{1/2} \  \ \forall u\in H^1(\Omega) \ \ \mbox{ (in 2D domains)},
\end{equation}
\begin{equation}\label{in3D}
\Vert u\Vert_{L^3}\leq C\Vert u\Vert_{L^2}^{1/2}\Vert u\Vert_{L^6}^{1/2} \  \ \forall u\in H^1(\Omega) \ \ \mbox{ (in 3D domains)}.
\end{equation}
Finally, the letter $C$ denotes different positive constants (independent of discrete parameters) which may change from line to line (or even within the same line).

\subsection{Variational formulation}
In this subsection we give a variational formulation for (\ref{KNS})-(\ref{initialdata}). We start by recalling a result in \cite{winkler2012global} which provides the existence and uniqueness of global classical solution for (\ref{KNS})-(\ref{initialdata}) in bounded and convex domains of $\mathbb{R}^2$ with boundary $\partial\Omega$ smooth enough.  Let us assume that the initial data satisfy the following regularity conditions:
\begin{equation}\label{data02}
\left\{
\begin{array}{lc}
\eta_0 \in C^0(\overline{\Omega}), \ \ n_0>0 \ \ \text{in } \: \overline{\Omega}, &\\
c_0 \in W^{1,q}(\Omega) \: \text{ for some }  q>2, \ \ c_0>0 \ \ \text{in } \: \overline{\Omega}, &\\
\mathbf{u}_0 \in D(A^{\alpha}) \ \ \text{ for some } \alpha \in (\frac{1}{2},1), \nabla\phi\in C^1(\overline{\Omega}),&
\end{array}
\right.
\end{equation}
where $A^\alpha$ denotes the fractional Stokes operator with domain $D(A^{\alpha}).$ Then, the following theorem holds.
\begin{theo}
Let $\Omega\subset \mathbb{R}^2$ be a bounded domain with smooth boundary, and suppose that $\eta_0,c_0, \mathbf{u}_0,\phi$ satisfy (\ref{data02}). Then (\ref{KNS})-(\ref{initialdata}) possesses a classical solution which is global in time. This solution is unique, up to addition of constants to the pressure $\pi$, and for all $T\in(0,\infty)$ the solution has the following regularity properties 
\begin{equation}\label{regul}
\left\{
\begin{array}{lc}
\eta\in C([0,T);L^2(\Omega))\cap L^\infty((0,T);C(\bar{\Omega}))\cap C^{2,1}(\bar{\Omega}\times(0,T)),\\
c\in C([0,T);L^2(\Omega))\cap L^\infty((0,T);W^{1,q}({\Omega}))\cap C^{2,1}(\bar{\Omega}\times(0,T)),\\
\mathbf{u}\in C([0,T);L^2(\Omega))\cap L^\infty((0,T);D(A^\alpha))\cap C^{2,1}(\bar{\Omega}\times(0,T)),\\
\pi\in L^1((0,T);W^{1,2}).
\end{array}
\right.
\end{equation}
\end{theo}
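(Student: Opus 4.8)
The plan is to follow the standard three-step strategy for quasilinear parabolic--fluid systems: first construct a local-in-time classical solution by a fixed-point argument, then establish a priori estimates uniform on the maximal existence interval, and finally invoke a continuation criterion to rule out finite-time blow-up and extend the solution globally; uniqueness will be obtained separately by an energy estimate on the difference of two solutions. For the local step I would fix a small $T_0>0$ and set up a Banach fixed point via the variation-of-constants formula, using the Neumann heat semigroups for $\eta$ and $c$ and the Stokes semigroup for $\mathbf{u}$, in a function space encoding the regularity in (\ref{data02}). The smoothing properties of these semigroups, together with the $W^{1,q}$-regularity of $c_0$ and the $D(A^{\alpha})$-regularity of $\mathbf{u}_0$, yield a unique mild solution on $(0,T_0)$, which parabolic Schauder theory then upgrades to a classical solution in $\bar\Omega\times(0,T_0)$.

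The core of the argument lies in the a priori estimates. Integrating the $\eta$-equation against the no-flux boundary condition gives the mass identity $\int_\Omega\eta(\cdot,t)=\int_\Omega\eta_0$, while the maximum principle applied to the $\eta$- and $c$-equations yields $\eta>0$ together with $0<c\le\|c_0\|_{L^\infty}$, the upper bound following from the fact that the reaction $-\gamma\eta c$ only depletes $c$. The decisive quantity is a coupled entropy--energy functional of the form
$$\mathcal{E}(t)=\int_\Omega\eta\ln\eta\,dx+\frac{1}{2}\int_\Omega\frac{|\nabla c|^2}{c}\,dx+\frac{\kappa}{2}\int_\Omega|\mathbf{u}|^2\,dx,$$
for a suitable $\kappa>0$. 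Differentiating in time, using incompressibility $\nabla\cdot\mathbf{u}=0$ to annihilate the transport contributions, and integrating by parts, I would derive a dissipation inequality of the type
$$\frac{d}{dt}\mathcal{E}(t)+\delta\left(\int_\Omega\frac{|\nabla\eta|^2}{\eta}+\int_\Omega c\,|D^2\ln c|^2+\int_\Omega|\nabla\mathbf{u}|^2\right)\le C\,\mathcal{E}(t)+C.$$

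Here the convexity of $\Omega$ is essential: upon differentiating $\int_\Omega|\nabla c|^2/c$ and integrating by parts, a boundary term proportional to $\int_{\partial\Omega}c^{-1}\,\partial_{\boldsymbol\nu}|\nabla c|^2$ arises, and under $\partial_{\boldsymbol\nu}c=0$ this equals the integral of the second fundamental form of $\partial\Omega$ evaluated on $\nabla c$, which carries the favourable sign precisely when $\partial\Omega$ is convex. With the dissipation inequality secured, a Gronwall argument produces uniform-in-time bounds on $\int_\Omega\eta\ln\eta$, on $\nabla c$ in $L^2$, and on $\mathbf{u}$ in $L^2$. From these I would bootstrap: the $L^1\log L$ control on $\eta$, combined with heat-semigroup smoothing and the interpolation inequality (\ref{in2D}), upgrades $\eta$ to $L^p$ for every finite $p$ and then to $L^\infty$; reinserting this into the $c$- and $\mathbf{u}$-equations and applying parabolic and Stokes maximal regularity delivers the $C^{2,1}$ bounds recorded in (\ref{regul}). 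Since these estimates contradict the blow-up dictated by the continuation criterion, the solution is global.

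For uniqueness I would take two solutions sharing the data, write the equations for their differences, test with the differences themselves (and with the gradient of the difference in the $c$-component), and close a Gronwall inequality using the established $L^\infty$ bounds on $\eta,c,\nabla c$ and $\mathbf{u}$; uniqueness of $\pi$ up to an additive constant then follows from the Navier--Stokes structure. I expect the genuine obstacle to be the coupled entropy--energy dissipation inequality: balancing the chemotactic cross term $\chi\int_\Omega\nabla\eta\cdot\nabla c$ against the diffusion, controlling the fluid--chemical coupling introduced by the transport of $c$, and, above all, extracting the favourable boundary sign from convexity, which is exactly what forces the convexity hypothesis and constitutes the technical crux of the whole proof.
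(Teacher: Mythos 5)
Your reconstruction follows, in outline, the argument of Winkler's Theorem~1.1 in \cite{winkler2012global}, which is exactly what the paper cites for this statement: local existence by a semigroup fixed point, a coupled entropy--energy functional $\int_\Omega \eta\ln\eta + \tfrac12\int_\Omega |\nabla c|^2/c + \tfrac{\kappa}{2}\int_\Omega|\mathbf{u}|^2$ (note $\int_\Omega|\nabla c|^2/c = 4\int_\Omega|\nabla\sqrt c|^2$, the form appearing in the paper's remark), the problematic boundary term $\int_{\partial\Omega}\partial_{\boldsymbol\nu}|\nabla c|^2\,dS$, a bootstrap to $L^\infty$ followed by parabolic regularity, and uniqueness by a Gronwall estimate on differences. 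Up to that point the route is the same as the cited one.

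There is, however, a genuine gap relative to the statement you were asked to prove. The theorem assumes only that $\Omega\subset\mathbb{R}^2$ is bounded with \emph{smooth} boundary --- convexity is not hypothesized --- yet your argument, as you yourself stress, relies on convexity to extract the favourable sign of $\int_{\partial\Omega}c^{-1}\,\partial_{\boldsymbol\nu}|\nabla c|^2$, and you conclude that this ``forces the convexity hypothesis.'' As written, your proof therefore establishes the result only on convex domains and does not prove the theorem as stated. The paper's own proof addresses precisely this point: it observes that Winkler's original argument needs convexity only for the boundary term you identify, and that the convexity condition can be removed by invoking Lemma~4.2 of \cite{Mizoguchi}, as done in \cite{Jiang,lankeit2016long}. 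That lemma provides a pointwise bound of the form $\partial_{\boldsymbol\nu}|\nabla c|^2\le C\,|\nabla c|^2$ on $\partial\Omega$ for functions satisfying $\partial_{\boldsymbol\nu}c=0$, with $C$ depending on the curvature of $\partial\Omega$; the resulting boundary integral is then absorbed into the interior dissipation via trace and interpolation inequalities. Without this (or an equivalent device), your dissipation inequality fails on non-convex domains and the global extension step collapses. Everything else in your outline is consistent with the cited proof.
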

\begin{proof}
The proof is given in \cite[Theorem 1.1]{winkler2012global}. Indeed, the proof of \cite{winkler2012global} requires that $\Omega$ be a bounded convex domain. This condition comes from an integration term on the boundary which appears dealing with the differentiation of $\int_\Omega\vert \nabla \sqrt c\vert^2.$ That boundary term takes the form $\int_{\partial \Omega}\frac{\partial}{\partial \nu}\vert \nabla c\vert^2dS.$ Thus, if $\Omega$ is convex, the last boundary integral turns out to be non-positive and thus, this term can be neglected. However, the convexity condition can be removed by using Lemma 4.2 in \cite{Mizoguchi}, as used in \cite{Jiang,lankeit2016long}.
\end{proof}

For the three-dimensional setting of (\ref{KNS})-(\ref{initialdata}) and large initial data, Winkler in \cite{winkler2016global} proved the existence of weak solutions (for bounded convex domains). The notion of weak solution considered in \cite{winkler2016global} is the following one:
\begin{defi}\label{weak}(\textit{Weak solution})
A weak solution of (\ref{KNS})-(\ref{initialdata}) is a triple $[n,c,{u}]$ of functions such that
$\eta\in {L^{1}([0,T],W^{1,1}}(\Omega)),$  $c\in {L^{1}([0,T],W^{1,1}}(\Omega)),$  $\mathbf{u}\in L^1([0,T],W^{1,1}),$ verifying $\eta\geq 0$ and $c\geq 0$ a.e. in $\Omega\times (0,T),$ $nc\in L^1(0,T; L^1(\Omega)),$ $\mathbf{u} \otimes \mathbf{u}\in L^1(0,T; L^1(\Omega)),$ $\eta\nabla c, \eta c$ and $\eta \mathbf{u}\in L^1(0,T; L^1(\Omega)),$ $\nabla\cdot \mathbf{u}=0$ a.e in $\Omega\times (0,T),$ and such that
\begin{eqnarray*}
-\int_0^T\int_\Omega \eta\varphi_t-\int_\Omega \eta_0\varphi(\cdot,0)-\int_0^T\int_\Omega \eta{\mathbf{u}}\cdot\nabla\varphi=-\int_0^T\int_\Omega \nabla \eta\cdot \nabla  \varphi
+\chi\int_0^T\int_\Omega \eta\nabla c\cdot \varphi,\\
-\int_0^T\int_\Omega c\varphi_t-\int_\Omega c_0\varphi(\cdot,0)-\int_0^T\int_\Omega c\mathbf{u}\cdot\nabla\varphi+\int_0^T\int_\Omega \nabla c\cdot\nabla  \varphi=-\gamma\int_0^T\int_\Omega \eta c\varphi,\\
-\int_0^T\int_\Omega {\mathbf{u}}{\psi}_t-\int_\Omega {\mathbf{u}}_0{\psi}(\cdot,0)-\int_0^T\int_\Omega {\mathbf{u}}\otimes{\mathbf{u}}\cdot\nabla{\psi}+\int_0^T\int_\Omega \nabla {\mathbf{u}}\cdot \nabla  {\psi}
=-\int_0^T\int_\Omega \eta\nabla\phi\cdot\nabla{\psi},
\end{eqnarray*}
for all $\varphi\in C_0^\infty([0,T]\times \bar{\Omega})$ with $\varphi(T)=0,$ and ${\psi}$ in $C_0^\infty([0,T]\times \bar{\Omega})^3$ with ${\psi}(T)=0.$
\end{defi}
By assuming $\phi\in W^{2,\infty}({\Omega})$ and the initial data $[n_0,c_0,\mathbf{u}_0]$ satisfying 
\begin{equation}\label{initial1b}
\left\{
\begin{array}{lc}
\eta_0\in L \log L(\Omega)\ \mbox{is nonnegative with}\ \eta_0\neq 0,\\
c_0\in L^{\infty}(\Omega)\ \mbox{is nonnegative with}\ \sqrt{c_0}\in W^{1,2}(\Omega),\\  
{\mathbf{u}}_0\in L^2(\Omega),\ \nabla\cdot \mathbf{u}_0=0,
\end{array}
\right.
\end{equation}
in \cite{winkler2016global}, the author proves the following theorem of global existence of weak solutions.
\begin{theo}\label{teor2} Assume that the initial data $[n_0,c_0,{\mathbf{u}}_0]$ satisfies (\ref{initial1b}) and $\phi\in W^{2,\infty}({\Omega}).$ Then there exist
$\eta\in {L^{\infty}(0,T;L^{1}(\Omega))\cap {L^{\frac{5}{4}}([0,T],W^{1,\frac{5}{4}}}(\Omega))},$  $c\in {L^{\infty}(0,T;L^\infty(\Omega))\cap L^4([0,T];W^{1,4}(\Omega))},$ $\mathbf{u}\in L^2([0,T],\bf{V}),$
such that $[n,c,\bf{u}]$ is a weak solution of (\ref{KNS})-(\ref{initialdata}) in the sense of Definition \ref{weak}.
\end{theo}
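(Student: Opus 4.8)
The plan is to establish the existence of global weak solutions for the three-dimensional chemotaxis-Navier-Stokes system via an approximation-compactness argument. The standard strategy is to regularize the system—for instance by replacing the singular chemotactic sensitivity with a truncated version $\chi_\varepsilon(\eta)=\chi\,\eta/(1+\varepsilon\eta)$ and mollifying the fluid convection $(\mathbf{u}_\varepsilon\cdot\nabla)\mathbf{u}_\varepsilon$—so that for each $\varepsilon>0$ one obtains a family of smooth (or sufficiently regular) global solutions $[\eta_\varepsilon,c_\varepsilon,\mathbf{u}_\varepsilon]$ to the approximate problems. These approximate solutions exist by the two-dimensional-type arguments or by standard parabolic fixed-point theory, and the heart of the matter is to derive \emph{$\varepsilon$-uniform a priori estimates} matching exactly the regularity classes claimed in the statement, namely $\eta\in L^\infty(0,T;L^1)\cap L^{5/4}(0,T;W^{1,5/4})$, $c\in L^\infty(0,T;L^\infty)\cap L^4(0,T;W^{1,4})$, and $\mathbf{u}\in L^2(0,T;\mathbf{V})$.

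\medskip

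\noindent The key steps, in order, are as follows. First I would record the elementary conserved and dissipated quantities: mass conservation $\int_\Omega\eta_\varepsilon=\int_\Omega\eta_0$ (obtained by testing the first equation with $1$ and using $\nabla\cdot\mathbf{u}_\varepsilon=0$ together with the no-flux boundary condition), and the maximum principle for the consumption equation giving $\|c_\varepsilon(t)\|_{L^\infty}\le\|c_0\|_{L^\infty}$, which secures the $L^\infty(0,T;L^\infty)$ bound on $c$. Second, I would establish the fundamental \emph{free energy / entropy inequality}. Testing the $\eta$-equation against $\log(\eta_\varepsilon+1)$ (or $1+\log\eta_\varepsilon$) and coupling it with the Dirichlet-type functional $\int_\Omega|\nabla\sqrt{c_\varepsilon}|^2$ and the fluid kinetic energy $\tfrac{\rho}{2}\|\mathbf{u}_\varepsilon\|_{L^2}^2$, one obtains an energy functional
\[
\mathcal{E}_\varepsilon(t)=\int_\Omega \eta_\varepsilon\log\eta_\varepsilon+\kappa\int_\Omega\frac{|\nabla c_\varepsilon|^2}{c_\varepsilon}+\frac{\rho}{2D_{\mathbf u}}\|\mathbf{u}_\varepsilon\|_{L^2}^2
\]
whose time derivative is controlled after absorbing the cross terms. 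This is where the forcing term $\eta_\varepsilon\nabla\phi$ is handled using $\phi\in W^{2,\infty}$ and the $L^1$-mass bound, and where the boundary integral $\int_{\partial\Omega}\partial_\nu|\nabla c_\varepsilon|^2$ is discarded by the convexity of $\Omega$ (or managed via the Mizoguchi-type lemma cited in the proof of the first theorem). From this inequality one extracts the dissipation terms $\int_0^T\!\!\int_\Omega\frac{|\nabla\eta_\varepsilon|^2}{\eta_\varepsilon}$, $\int_0^T\!\!\int_\Omega c_\varepsilon|D^2\log c_\varepsilon|^2$, and $\int_0^T\|\nabla\mathbf{u}_\varepsilon\|_{L^2}^2$, the last of which immediately yields the $L^2(0,T;\mathbf{V})$ bound on $\mathbf{u}$.

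\medskip

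\noindent Third, I would upgrade these dissipation bounds into the precise gradient estimates claimed. The bound $\int_0^T\!\!\int_\Omega\frac{|\nabla\eta_\varepsilon|^2}{\eta_\varepsilon}\le C$ combined with mass conservation gives, via Hölder with exponents tuned so that $\nabla\eta_\varepsilon=2\sqrt{\eta_\varepsilon}\,\nabla\sqrt{\eta_\varepsilon}$ and an interpolation of $\eta_\varepsilon$ against its $L^\infty_t L^1_x$ and the Gagliardo–Nirenberg improvement of $\sqrt{\eta_\varepsilon}\in L^2_tH^1_x$, exactly the exponent $5/4$: one checks $\|\nabla\eta_\varepsilon\|_{L^{5/4}(Q_T)}\le C$, which is the known borderline integrability in three dimensions. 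Similarly, writing $\nabla c_\varepsilon=2\sqrt{c_\varepsilon}\,\nabla\sqrt{c_\varepsilon}$ and using the $L^\infty$ bound on $c_\varepsilon$ together with $\nabla\sqrt{c_\varepsilon}\in L^2_tH^1_x$ and the embedding $H^1\hookrightarrow L^6$ in 3D yields $\nabla c_\varepsilon\in L^4(Q_T)$, giving the $L^4(0,T;W^{1,4})$ class for $c$. Fourth and finally, I would pass to the limit $\varepsilon\to0$: the uniform bounds yield weak(-star) limits $[\eta,c,\mathbf{u}]$ in the stated spaces, and I would invoke Aubin–Lions–Simon compactness—using the equations to bound the time derivatives $\partial_t\eta_\varepsilon,\partial_t c_\varepsilon,\partial_t\mathbf{u}_\varepsilon$ in suitable negative-order spaces—to obtain \emph{strong} convergence of $\eta_\varepsilon,c_\varepsilon,\mathbf{u}_\varepsilon$ in appropriate $L^p(Q_T)$, which is what allows identification of the nonlinear products $\eta_\varepsilon\mathbf{u}_\varepsilon$, $\eta_\varepsilon\nabla c_\varepsilon$, $\eta_\varepsilon c_\varepsilon$, and $\mathbf{u}_\varepsilon\otimes\mathbf{u}_\varepsilon$ with their limits in the weak formulation of Definition \ref{weak}. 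The nonnegativity of $\eta$ and $c$ is preserved in the limit.

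\medskip

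\noindent \textbf{The main obstacle} I anticipate is the derivation of the coupled energy inequality with $\varepsilon$-uniform constants: the chemotactic cross term $\chi\int_\Omega\eta_\varepsilon\nabla c_\varepsilon\cdot\nabla\log(\eta_\varepsilon+1)$ and the fluid-transport coupling must be absorbed into the dissipation, and the boundary term arising from differentiating $\int_\Omega|\nabla\sqrt{c_\varepsilon}|^2$ does not vanish unless convexity of $\Omega$ is exploited (exactly the difficulty flagged in the proof of the preceding theorem). Getting the dissipation strong enough to reach the sharp exponents $5/4$ and $4$—rather than some weaker integrability—is the delicate part, and it is precisely here that the three-dimensional analysis becomes borderline; once these uniform bounds are in hand, the compactness-and-limit passage is comparatively routine and follows the template of \cite{winkler2016global}.
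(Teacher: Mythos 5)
The paper offers no proof of this theorem: it is quoted directly from \cite{winkler2016global}, and your outline faithfully reproduces the strategy of that reference (regularization, the coupled entropy--energy functional $\int_\Omega \eta\log\eta+\int_\Omega|\nabla\sqrt{c}|^2+\|\mathbf{u}\|_{L^2}^2$, the dissipation-driven exponents $5/4$ and $4$, and Aubin--Lions compactness). The one imprecision worth flagging is that the $L^4(\Omega\times(0,T))$ bound on $\nabla c$ does not follow from $\nabla\sqrt{c}\in L^2_tH^1_x$ together with $H^1\hookrightarrow L^6$ alone (that interpolation chain only reaches $L^{10/3}$ in space-time); one needs the functional inequality bounding $\int_\Omega |\nabla c|^4/c^3$ by $\int_\Omega c\,|D^2\log c|^2$ plus a boundary contribution, which is precisely where the convexity of $\Omega$ (or the Mizoguchi--Souplet lemma) is invoked.
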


 Notice that if $[\eta,c,\mathbf{u},\pi]$ is a classical solution of system (\ref{KNS})-(\ref{initialdata}), then we get the following basic property: 
\begin{equation}\label{mass01b}
\int_{\Omega} \eta(\cdot,t)\, d\x = \int_{\Omega} \eta_0 \, d\x. 
\end{equation}
Equality (\ref{mass01b}) is obtained integrating the equation (\ref{KNS})$_1$ and using the boundary conditions (\ref{initialdata})$_2$; and it means that the total mass of bacteria is conserved in time and equal to the initial mass.\\

In order to design the scheme, taking into account the property (\ref{mass01b}), we first consider the auxiliary variable $n:=\eta-\frac{1}{\vert\Omega\vert}\int_{\Omega} \eta=\eta-\frac{1}{\vert\Omega\vert}\int_{\Omega} \eta_0:=\eta -\alpha_0$. Also, 
in order to introduce the weak formulation for the $c$-equation (\ref{KNS})$_2$, we consider the auxiliary unknown $\boldsymbol{\sigma} = \nabla c.$ Thus, we obtain the following mixed variational form for the variables $\eta, c, \boldsymbol{\sigma},\mathbf{u}$ and $\pi$:

\begin{equation}\label{Chemoweak1}
\left\{
\begin{array}{lc}\vspace{0.1 cm}
(n_t, \bar{n})+ D_n(\nabla n,\nabla\bar{n})+( \mathbf{u} \cdot \nabla n,\bar{n}) =\chi((n+\alpha_0) \boldsymbol{\sigma}, \nabla \bar{n}),\quad \forall\bar{n}\in \widehat{H}^1(\Omega),&\\
(\boldsymbol{\sigma},\bar{\boldsymbol{\sigma}})+(c,\nabla \cdot \bar{\boldsymbol{\sigma}})=0, \quad \forall \bar{\boldsymbol{\sigma}} \in \mathbf{H}_{\sigma}^1(\Omega),&\\
\displaystyle (c_t ,\bar{c})-D_c(\nabla \cdot \boldsymbol{\sigma},\bar{c})+(\mathbf{u} \cdot\boldsymbol{\sigma}, \bar{c})=-\gamma((n+\alpha_0)c,\bar{c}), \quad \forall \bar{c} \in L^2(\Omega),&\\
\rho ( \mathbf{u}_t,\bar{\mathbf{u}})+ D_{\mathbf{u}} (\nabla \mathbf{u},\nabla\bar{\mathbf{u}})+ \rho((\mathbf{u}\cdot \nabla)\mathbf{u},\bar{\mathbf{u}}) - (\pi,\nabla\cdot\bar{\mathbf{u}})=((n+\alpha_0)\nabla\phi,\bar{\mathbf{u}}),\ \  \forall\bar{\mathbf{u}}\in \mathbf{H}_0^1(\Omega),& \\
(\bar{\pi},\nabla \cdot {\mathbf{u}}) =0,\quad \forall\bar{\pi}\in {L}_0^2(\Omega).&
\end{array}
\right.
\end{equation}
Taking the time derivative in the equation (\ref{Chemoweak1})$_2$ we have
\begin{eqnarray}\label{Chemoweak2}
(\boldsymbol{\sigma}_t, \bar{\boldsymbol{\sigma}})+(c_t, \nabla \cdot \bar{\boldsymbol{\sigma}})=0, \quad \forall \bar{\boldsymbol{\sigma}} \in \mathbf{H}_{\sigma}^1(\Omega).
\end{eqnarray}
Then, choosing $\bar{c} = \nabla \cdot\ \bar{\boldsymbol{\sigma}}$ in (\ref{Chemoweak1})$_3$, substituting it into (\ref{Chemoweak2}), and adding the term $D_c(\mbox{rot }{\boldsymbol{\sigma}},\mbox{rot }\bar{\boldsymbol{\sigma}})$ using that $\mbox{rot }{\boldsymbol{\sigma}}=\mbox{rot}(\nabla c)=0$, we get the following variational formulation:
\begin{equation}\label{Chemoweak3}
\left\{
\begin{array}{lc} \vspace{0.1 cm}
(n_t, \bar{n})+ D_n(\nabla n,\nabla\bar{n})+( \mathbf{u} \cdot \nabla n,\bar{n}) =\chi((n+\alpha_0) \boldsymbol{\sigma}, \nabla \bar{n}),&\\
(\boldsymbol{\sigma}_t,\bar{\boldsymbol{\sigma}})+D_c(\nabla \cdot \boldsymbol{\sigma},\nabla \cdot \bar{\boldsymbol{\sigma}})+D_c(\mbox{rot }{\boldsymbol{\sigma}},\mbox{rot }\bar{\boldsymbol{\sigma}})- (\mathbf{u}\cdot \boldsymbol{\sigma}, \nabla \cdot \bar{\boldsymbol{\sigma}}) =\gamma ((n+\alpha_0)c, \nabla \cdot \bar{\boldsymbol{\sigma}}),& \\
(c_t, \bar{c}) + D_c(\nabla c,\nabla \bar{c}) + (\mathbf{u}\cdot \nabla c,\bar{c})= -\gamma((n+\alpha_0)c, \bar{c}),&\\
\rho ( \mathbf{u}_t,\bar{\mathbf{u}})+ D_{\mathbf{u}} (\nabla \mathbf{u},\nabla\bar{\mathbf{u}})+ \rho((\mathbf{u}\cdot \nabla)\mathbf{u},\bar{\mathbf{u}}) - (\pi,\nabla\cdot\bar{\mathbf{u}})=((n+\alpha_0)\nabla\phi,\bar{\mathbf{u}}),& \\
(\bar{\pi},\nabla \cdot {\mathbf{u}}) =0,&
\end{array}
\right.
\end{equation}
for all $[\bar{n},\bar{\boldsymbol{\sigma}},\bar{c},\bar{\mathbf{u}},\bar{\pi}] \in  \widehat{H}^1(\Omega)\times \mathbf{H}_{\sigma}^1(\Omega) \times H^1(\Omega)\times \mathbf{H}_0^1(\Omega)\times {L}_0^2(\Omega)$. 
The integral equations (\ref{Chemoweak3}) define a variational form of  (\ref{KNS})-(\ref{initialdata}). Also, it is straightforward to check that if we have a smooth solution of (\ref{Chemoweak3}), then $\boldsymbol{\sigma} = \nabla c.$

\section{Numerical scheme}\label{NScheme}
In this section, we construct  a mass-conservative numerical approximation for the weak solutions of the chemotaxis-fluid system (\ref{KNS}) with initial and boundary data (\ref{initialdata}). The idea is to use finite element approximations in space and finite differences in time (considered for simplicity on a uniform partition of $[0,T]$ with time step
$\Delta t=T/N : (t_n = n\Delta t)_{n=0}^{n=N}$), combined with splitting ideas to decouple the computation of the fluid part from the chemotaxis one. Moreover, in order to deal with the velocity trilinear form and the nonlinear convective terms, we will use the skew-symmetric forms $A$ and $B$ given in (\ref{a6})-(\ref{a6aa}).  Concerning to the space discretization, we consider conformed finite element spaces:
$\mathcal{X}_n\times \mathcal{X}_c\times \mathcal{X}_{\boldsymbol{\sigma}} \times\mathcal{X}_{\textbf{u}}\times \mathcal{X}_\pi  \subset \widehat{H}^1(\Omega)\times {H}^1(\Omega)\times \mathbf{H}^1_{\sigma}(\Omega)\times \textbf{H}^1_0(\Omega)\times {L}^2_0(\Omega)$ corresponding to a family of shape-regular and quasi-uniform  triangulations of $\overline{\Omega}$, $\{\mathcal{T}_h\}_{h>0}$,  made up of triangles $K$, so that $\overline{\Omega}= \cup_{K\in \mathcal{T}_h} K$, where $h = \max_{K\in \mathcal{T}_h} h_K$, with $h_K$ being the diameter of $K$. 

\subsection{Hyphotesis and preliminary results}
We assume that $\mathcal{X}_{\textbf{u}}$ and $\mathcal{X}_\pi $ satisfy the following discrete {\it inf-sup} condition: There exists a constant $\beta>0$, independent of $h$, such that
\begin{equation}\label{LBB}
\sup_{\mathbf{v}\in \mathcal{X}_{\textbf{u}}\setminus\{0\}} \frac{-(w,\nabla \cdot \mathbf{v}) }{\|\mathbf{v}\|_{\mathcal{X}_{\textbf{u}}}} \geq \beta \|w\|_{\mathcal{X}_\pi}, \quad \forall w
\in \mathcal{X}_\pi.
\end{equation}
\begin{remark}{\bf (Some possibilities for the choice of the discrete spaces)} 
For the spaces $[\mathcal{X}_{\mathbf{u}}, \mathcal{X}_\pi]$, we can choose the Taylor-Hood approximation $\mathbb{P}_{r}\times \mathbb{P}_{r-1}$ (for $r\geq 2$) (\cite{GR,Sten}); or the approximation $\mathbb{P}_1-bubble \times \mathbb{P}_1$ (\cite{GR}) (for $r=1$). On the other hand, the spaces 
$[\mathcal{X}_{n}, \mathcal{X}_c,\mathcal{X}_{\boldsymbol{\sigma}}]$ are approximated by $\mathbb{P}_{r_1}\times\mathbb{P}_{r_2}\times\mathbb{P}_{r_3}$- continuous FE, with $r_i\geq 1$ ($i=1,2,3$).
\end{remark}

Then, we can define the well-known Stokes operator $(\mathbb{P}_{\mathbf{u}},\mathbb{P}_\pi):\mathbf{H}^1_0(\Omega)\times L^2_0(\Omega)\rightarrow \mathcal{X}_{\mathbf{u}}\times \mathcal{X}_\pi$ such that  $[\mathbb{P}_{\mathbf{u}}\mathbf{u},\mathbb{P}_\pi \pi]\in \mathcal{X}_{\mathbf{u}}\times \mathcal{X}_\pi$ satisfies
\begin{equation}\label{StokesOp}
\left\{
\begin{array}{lc} \vspace{0.1 cm}
D_{\mathbf{u}}(\nabla
(\mathbb{P}_{\mathbf{u}}{\mathbf{u}}  -{\mathbf{u}}), \nabla\bar{\mathbf{u}}) - (\mathbb{P}_\pi \pi - \pi, \nabla \cdot \bar{\mathbf{u}})=0,\quad \forall \bar{\mathbf{u}}\in \mathcal{X}_{\mathbf{u}},& \\
(\nabla\cdot
(\mathbb{P}_{\mathbf{u}}{\mathbf{u}}  -{\mathbf{u}}), \bar{\pi})=0, \quad \forall \bar{\pi}\in \mathcal{X}_\pi,&
\end{array}
\right.
\end{equation}
and the following approximation and stability properties hold (\cite{GV}):
\begin{equation}\label{StabStk1}
\| [\mathbf{u}-\mathbb{P}_ {\mathbf{u}}\mathbf{u},\pi-\mathbb{P}_{\pi} \pi]\|_{ H^1\times L^2} + \frac{1}{h}\| \mathbf{u}-\mathbb{P}_ {\mathbf{u}}\mathbf{u}\|_{L^2} \leq K h^r \|[\mathbf{u},\pi]\|_{H^{r+1}\times H^{r}},
\end{equation}
\begin{equation}\label{StabStk2}
\|[\mathbb{P}_ {\mathbf{u}}\mathbf{u},\mathbb{P}_{\pi} \pi]\|_{W^{1,6}\times L^6} \leq C \|[\mathbf{u},\pi]\|_{H^{2}\times H^1}.
\end{equation}
Moreover, we consider the following interpolation operators:  
$$
\mathbb{P}_n:\widehat{H}^1(\Omega)\rightarrow \mathcal{X}_n, \quad \mathbb{P}_c:H^1(\Omega)\rightarrow \mathcal{X}_c,\quad  \mathbb{P}_{\boldsymbol \sigma}:\mathbf{H}^1_{\sigma}(\Omega)\rightarrow \mathcal{X}_{\boldsymbol{\sigma}},
$$
such that for all $n\in \widehat{H}^1(\Omega)$, $c\in H^1(\Omega)$ and ${\boldsymbol{\sigma}}\in \mathbf{H}^1_{\sigma}(\Omega)$, $\mathbb{P}_n n \in \mathcal{X}_n$, $\mathbb{P}_c c\in \mathcal{X}_c$ and $\mathbb{P}_{\boldsymbol \sigma} {\boldsymbol \sigma}\in \mathcal{X}_{\boldsymbol{\sigma}}$ satisfy
\begin{equation}\label{Interp1New}
\left\{
\begin{array}{lc} \vspace{0.1 cm}
(\nabla
(\mathbb{P}_n n - n), \nabla\bar{n})=0,\quad \forall \bar{n}\in \mathcal{X}_n,& \\
\vspace{0.1 cm}
(\nabla
(\mathbb{P}_c c - c), \nabla\bar{c})  + (\mathbb{P}_c c - c, \bar{c})=0,\quad \forall \bar{c}\in \mathcal{X}_c,& \\
\vspace{0.1 cm}
(\nabla \cdot (\mathbb{P}_{\boldsymbol \sigma} {\boldsymbol\sigma} - {\boldsymbol\sigma}),\nabla \cdot \bar{\boldsymbol\sigma}) + (\mbox{rot}(\mathbb{P}_{\boldsymbol \sigma} {\boldsymbol\sigma}-{\boldsymbol\sigma}),\mbox{rot }\bar{\boldsymbol\sigma}) + (\mathbb{P}_{\boldsymbol \sigma} {\boldsymbol\sigma} - {\boldsymbol\sigma},\bar{\boldsymbol\sigma})=0,\ \ \forall \bar{\boldsymbol \sigma}\in \mathcal{X}_{\boldsymbol{\sigma}},&
\end{array}
\right.
\end{equation}
respectively. Observe that from Lax-Milgram Theorem, we have that the interpolation operators $\mathbb{P}_n$, $\mathbb{P}_c$ and $\mathbb{P}_{\boldsymbol \sigma}$ are well defined. Moreover, it is well known that the following interpolation errors hold:
\begin{equation}\label{aprox01} 
\left\{
\begin{array}{lc}
\| n-\mathbb{P}_n n\|_{L^2}+ h \|  n-\mathbb{P}_n n  \|_{H^1} \leq K h^{r_1+1} \|n\|_{H^{r_1+1}}, & \forall n \in H^{r_1+1}(\Omega),\\[.2cm]
\| c-\mathbb{P}_c c\|_{L^2}+ h \|  c-\mathbb{P}_c c  \|_{H^1} \leq K h^{r_2+1} \|c\|_{H^{r_2+1}}, & \forall c \in H^{r_2+1}(\Omega),\\[.2cm]
 \Vert {\boldsymbol\sigma}- \mathbb{P}_{\boldsymbol \sigma} {\boldsymbol\sigma}  \Vert_{L^2} + h \Vert {\boldsymbol\sigma} - \mathbb{P}_{\boldsymbol \sigma} {\boldsymbol\sigma} \Vert_{H^1} \leq Ch^{r_3 + 1} \Vert  {\boldsymbol\sigma}\Vert_{H^{r_3 +1}}, &  \forall  {\boldsymbol\sigma}\in \mathbf{H}^{r_3 +1}(\Omega).
\end{array}
\right.
\end{equation}
Also, the stability properties 
\begin{equation}\label{aprox01-aNN}
\| [\mathbb{P}_n n, \mathbb{P}_c c, \mathbb{P}_{\boldsymbol\sigma} {\boldsymbol{\sigma}}] \|_{H^{1}} \leq  \| [n,c, {\boldsymbol{\sigma}}] \|_{H^1},
\end{equation}
\begin{equation}\label{aprox01-a}
\| [\mathbb{P}_n n, \mathbb{P}_c c, \mathbb{P}_{\boldsymbol\sigma} {\boldsymbol{\sigma}}] \|_{W^{1,6}} \leq C \| [n,c, {\boldsymbol{\sigma}}] \|_{H^2},
\end{equation}
hold. Inequality (\ref{aprox01-aNN}) can be deduced from (\ref{Interp1New}), and (\ref{aprox01-a}) can be obtained from (\ref{aprox01}) using the inverse inequality 
$$\Vert [n_h,c_h,{\boldsymbol\sigma}_h]\Vert_{W^{1,6}}\leq Ch^{-p}\Vert [n_h,c_h,{\boldsymbol\sigma}_h]\Vert_{H^1} \ \ \mbox {for all} \ [n_h,c_h,{\boldsymbol\sigma}_h]\in \mathbb{P}_n\times \mathbb{P}_c\times \mathbb{P}_{\boldsymbol \sigma},$$ 
with $p=2/3$ (in the 2D case) and $p=1$ (in the 3D case), and comparing $\mathbb{P}_{n,c,{\boldsymbol \sigma}}$ with an average interpolation of Clement or Scott-Zhang type (which are stable in $W^{1,6}$-norm). Finally, we consider the following skew-symmetric trilinear forms which will be used in the formulation of the numerical scheme:
\begin{eqnarray}
B(\mathbf{v}_1,\mathbf{v}_2,\mathbf{v}_3)&=&\frac{1}{2}\Big[\Big( (\mathbf{v}_1\cdot \nabla)\mathbf{v}_2,\mathbf{v}_3 \Big) - \Big( (\mathbf{v}_1\cdot \nabla)\mathbf{v}_3,\mathbf{v}_2 \Big)\Big], \ \forall \mathbf{v}_1,\mathbf{v}_2,\mathbf{v}_3 \in \mathbf{H}^1(\Omega),\label{a6}\\
A(\mathbf{v},w_1,w_2)&=&\frac{1}{2}\Big[\Big( (\mathbf{v}\cdot \nabla)w_1,w_2 \Big) - \Big( (\mathbf{v}\cdot \nabla)w_2,w_1 \Big)\Big], \ \forall \mathbf{v} \in \mathbf{H}^1(\Omega), \ w_1,w_2\in H^1(\Omega). \label{a6aa}
\end{eqnarray}
It is easy to verify that
\begin{eqnarray}
B(\mathbf{v}_1,\mathbf{v}_2,\mathbf{v}_3)\!&=&\!\Big( (\mathbf{v}_1\cdot \nabla)\mathbf{v}_2,\mathbf{v}_3 \Big), \ \ \forall \mathbf{v}_1\in \mathbf{V} , \ \ \mathbf{v}_2,\mathbf{v}_3  \in  \mathbf{H}^1(\Omega), \label{aA1} \\
A(\mathbf{v}_1,w_1,w_2)\| &=&\!\Big( (\mathbf{v}_1\cdot \nabla)w_1,w_2 \Big), \ \ \forall \mathbf{v}_1\in \mathbf{V} , \ \ w_1,w_2  \in  {H}^1(\Omega),\label{aA2} \\
B(\mathbf{v}_1,\mathbf{v}_2,\mathbf{v}_2)&=&0,\ \ \forall \mathbf{v}_1,\mathbf{v}_2\in \mathbf{H}^1(\Omega), \label{a7} \\
A(\mathbf{v},w,w)&=&0,\ \forall w\in H^1(\Omega),\ \ \mathbf{v}\in \mathbf{H}^1(\Omega). \label{a8} 
\end{eqnarray}

\subsection{Definition of the scheme}
Taking into account  (\ref{Chemoweak3}), we consider the following first order in time, linear and semi-coupled scheme:\\

\textbf{Initialization:} Let $[n^0_h,c^0_h,\boldsymbol{\sigma}^0_h,\mathbf{u}^0_h]=[\mathbb{P}_n n_0, \mathbb{P}_c c_0, \mathbb{P}_{\boldsymbol\sigma} {\boldsymbol\sigma}_0, \mathbb{P}_{\mathbf{u}} \mathbf{u}_0] \in \mathcal{X}_n \times \mathcal{X}_c \times \mathcal{X}_{\boldsymbol\sigma} \times \mathcal{X}_{\mathbf{u}}$. \\

\textbf{Time step $m$:} Given the vector $[n^{m-1}_h,c^{m-1}_h,\boldsymbol{\sigma}^{m-1}_h,\mathbf{u}^{m-1}_h]\in \mathcal{X}_n \times \mathcal{X}_c \times \mathcal{X}_{\boldsymbol\sigma} \times \mathcal{X}_{\mathbf{u}}$, compute $[n^m_h,c^m_h,\boldsymbol{\sigma}^{m}_h,\mathbf{u}^m_h,\pi_h^m]\in\mathcal{X}_n \times \mathcal{X}_c \times \mathcal{X}_{\boldsymbol\sigma} \times \mathcal{X}_{\mathbf{u}}\times \mathcal{X}_\pi$ such that for each $[\bar{n},\bar{c},\bar{\boldsymbol{\sigma}},\bar{\mathbf{u}},\bar{\pi}] \in \mathcal{X}_n \times \mathcal{X}_c \times \mathcal{X}_{\boldsymbol\sigma} \times \mathcal{X}_{\mathbf{u}}\times \mathcal{X}_\pi$ it holds:
\begin{flalign}\label{scheme1} 
&a) \ \ (\delta_t n^m_h,\bar{n})+D_n (\nabla n^m_h,\nabla \bar{n})+A(\mathbf{u}^{m-1}_h, n^m_h,\bar{n}) - \chi((n^{m-1}_h+\alpha_0) {\boldsymbol{\sigma}}^{m-1}_h,\nabla \bar{n})= 0, \nonumber\\
&b) \ \ (\delta_t \boldsymbol{\sigma}^m_h,\bar{\boldsymbol{\sigma}})+D_c(\nabla \cdot \boldsymbol{\sigma}^m_h,\nabla \cdot \bar{\boldsymbol{\sigma}})+D_c (\mbox{rot }\boldsymbol{\sigma}^m_h,\mbox{rot }\bar{\boldsymbol{\sigma}})= (\mathbf{u}^{m-1}_h\cdot {\boldsymbol\sigma}^{m-1}_h +\gamma(n^{m-1}_h+\alpha_0)c^{m-1}_h, \nabla \cdot \bar{\boldsymbol{\sigma}}), & \nonumber\\
&c) \ \ (\delta_t c^m_h,\bar{c}) + D_c (\nabla c^m_h,\nabla \bar{c})+A(\mathbf{u}^{m-1}_h,c^m_h,\bar{c})=-\gamma((n^{m-1}_h+\alpha_0) c^{m-1}_h,\bar{c}), \\
&d) \ \ (\delta_t \mathbf{u}^m_h,\bar{\mathbf{u}})+B(\mathbf{u}^{m-1}_h,\mathbf{u}^m_h,\bar{\mathbf{u}})+\frac{D_{\mathbf{u}}}{\rho}(\nabla \mathbf{u}^m_h,\nabla \bar{\mathbf{u}})-\frac{1}{\rho}(\pi^m_h,\nabla \cdot \bar{\mathbf{u}}) = \frac{1}{\rho} ((n^{m-1}_h+\alpha_0) \nabla \phi,\bar{\mathbf{u}}), \nonumber \\
&e) \ \ (\bar{\pi},\nabla \cdot {\mathbf{u}_h^m}) =0, \nonumber
\end{flalign}
\noindent where, in general, we denote $\delta_t a^m_h=\frac{a^m_h-a^{m-1}_h}{\Delta t}$. \\

\begin{remark}
The skew-symmetric forms $A$ and $B$ verifying the  properties (\ref{aA1})-(\ref{a8}) will be important in order to get uniform estimates for the discrete solutions and to develop convergence analysis.
\end{remark}

\section{Well-posedness and mass-conservation}
In this section, we analyze the well-posedness of the scheme (\ref{scheme1})  and the mass-conservation property. With this aim, we define $\eta^m_h:=n^m_h+\alpha_0$. Then $\eta^m_h$ verifies
$$(\delta_t \eta^m_h,\bar{n})+D_n (\nabla \eta^m_h,\nabla \bar{n})+A(\mathbf{u}^{m-1}_h, \eta^m_h -\alpha_0,\bar{n}) - \chi(\eta^{m-1}_h{\boldsymbol{\sigma}}^{m-1}_h,\nabla \bar{n})= 0.$$ 

\begin{lemm}{\bf (Mass conservation)}\label{MCs}
	The discrete cell density $\eta^m_h$ satisfies the mass-conservation property (\ref{mass01b}).
\end{lemm}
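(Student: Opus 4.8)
The plan is to exploit the structural fact that, by construction of scheme (\ref{scheme1}), the auxiliary variable $n^m_h$ is sought in the discrete space $\mathcal{X}_n$, which is a subspace of the zero-average space $\widehat{H}^1(\Omega)$. First I would recall that every element of $\widehat{H}^1(\Omega)=\{w\in H^1(\Omega):\int_\Omega w=0\}$ has vanishing integral over $\Omega$. Since $\mathcal{X}_n\subset\widehat{H}^1(\Omega)$ and the scheme produces an iterate $n^m_h\in\mathcal{X}_n$ at every time step $m$, it follows immediately that $\int_\Omega n^m_h\,d\x=0$ for all $m\ge 0$.

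Next I would translate this into the desired statement about the discrete cell density. Using the definition $\eta^m_h:=n^m_h+\alpha_0$ and integrating over $\Omega$, I obtain $\int_\Omega \eta^m_h\,d\x=\int_\Omega n^m_h\,d\x+\alpha_0\vert\Omega\vert=\alpha_0\vert\Omega\vert$, which is independent of $m$. Recalling that $\alpha_0=\frac{1}{\vert\Omega\vert}\int_\Omega\eta_0$, this yields $\int_\Omega\eta^m_h\,d\x=\int_\Omega\eta_0\,d\x$ for every $m$, which is precisely the discrete counterpart of the conservation property (\ref{mass01b}). The initialization is consistent with this, since $n^0_h=\mathbb{P}_n n_0$ with $\mathbb{P}_n$ mapping into $\mathcal{X}_n\subset\widehat{H}^1(\Omega)$, and moreover $n_0=\eta_0-\alpha_0$ itself has zero average by the very choice of $\alpha_0$.

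The argument is short because the whole purpose of introducing the shifted unknown $n=\eta-\alpha_0$ and of posing it in $\widehat{H}^1(\Omega)$ was to render conservation structural rather than something to be extracted from the discrete equations. The only subtlety worth flagging is that one cannot recover this by testing equation (\ref{scheme1})a with the constant function $\bar n=1$, as one would do in the continuous setting, because $1\notin\widehat{H}^1(\Omega)$ and hence $1\notin\mathcal{X}_n$. Even formally, although the diffusion and chemotaxis contributions would drop out thanks to $\nabla 1=0$, the discrete convective term $A(\mathbf{u}^{m-1}_h,\eta^m_h-\alpha_0,1)=\frac{1}{2}\int_\Omega(\mathbf{u}^{m-1}_h\cdot\nabla)(\eta^m_h-\alpha_0)=-\frac{1}{2}\int_\Omega(\eta^m_h-\alpha_0)\,\nabla\cdot\mathbf{u}^{m-1}_h$ need not vanish, since $\mathbf{u}^{m-1}_h$ is only discretely divergence-free through (\ref{scheme1})e and $(\eta^m_h-\alpha_0)$ does not belong to $\mathcal{X}_\pi$ in general. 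For this reason I would present the proof via the zero-average membership $n^m_h\in\widehat{H}^1(\Omega)$ described above, which is the clean and complete route.
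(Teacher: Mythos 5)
Your proof is correct and follows essentially the same route as the paper: both rest on the observation that $n^m_h\in\mathcal{X}_n\subset\widehat{H}^1(\Omega)$ forces $\int_\Omega n^m_h=0$ for every $m$, so that $\int_\Omega\eta^m_h=\alpha_0\vert\Omega\vert=\int_\Omega\eta_0$ is automatic from the definition $\eta^m_h=n^m_h+\alpha_0$. Your additional remark explaining why testing with $\bar n=1$ is unavailable is accurate but not needed for the argument.
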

\begin{proof}
	From the construction of scheme (\ref{scheme1}), for each $m\geq 0$ it holds that $\int_\Omega n^m_h=0$ (since $n^m_h\in \mathcal{X}_n\subset \widehat{H}^1(\Omega)$); from which we deduce that 
		$\eta^m_h$ satisfies 
		$$
		\int_\Omega \eta^m_h=\int_\Omega \eta^{m-1}_h=\cdot \cdot \cdot =\int_{\Omega} \eta^0_h=\alpha_0.
		$$
\end{proof}
Now, in the next result, we prove the well-posedness of the scheme (\ref{scheme1}).
\begin{theo}{\bf (Unconditional well-posedness)}\label{WPs}
The numerical scheme (\ref{scheme1}) is well-posed, that is, there exists a unique $[n^m_h,c^m_h,\boldsymbol{\sigma}^{m}_h,\mathbf{u}^m_h,\pi_h^m]\in\mathcal{X}_n \times \mathcal{X}_c \times \mathcal{X}_{\boldsymbol\sigma} \times \mathcal{X}_{\mathbf{u}}\times \mathcal{X}_\pi$ solution of the scheme (\ref{scheme1}).
\end{theo}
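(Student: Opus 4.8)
The plan is to exploit the fact that scheme (\ref{scheme1}) is linear and, at each fixed time step $m$, fully decoupled: every coupling quantity---namely $\mathbf{u}^{m-1}_h$, $n^{m-1}_h$, $c^{m-1}_h$ and $\boldsymbol{\sigma}^{m-1}_h$---is evaluated at the previous step and is therefore known data. Consequently (\ref{scheme1})a) is a linear problem for $n^m_h$ alone, (\ref{scheme1})b) for $\boldsymbol{\sigma}^m_h$ alone, (\ref{scheme1})c) for $c^m_h$ alone, and (\ref{scheme1})d)--e) a linear saddle-point problem of Stokes type for the pair $[\mathbf{u}^m_h,\pi^m_h]$. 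I would argue by induction on $m$: the initialization supplies $[n^0_h,c^0_h,\boldsymbol{\sigma}^0_h,\mathbf{u}^0_h]$, and, assuming the level $m-1$ data are well defined, I solve each of the four subproblems independently. Since each is posed on a finite-dimensional space, existence is equivalent to uniqueness, so it suffices to verify that the associated bilinear forms are positive definite (and, for the Stokes block, that the inf-sup condition holds).

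For the scalar and vector advection--diffusion blocks I would test each homogeneous equation with its own unknown. In (\ref{scheme1})a) and (\ref{scheme1})c), taking $\bar{n}=n^m_h$ and $\bar{c}=c^m_h$, the skew-symmetry property (\ref{a8}) annihilates the transport terms, since $A(\mathbf{u}^{m-1}_h,w,w)=0$, leaving $\frac{1}{\Delta t}\Vert w\Vert_{L^2}^2 + D\Vert\nabla w\Vert_{L^2}^2$ (with $D=D_n$, resp. $D_c$), which vanishes only for $w=0$; hence these forms are coercive and Lax--Milgram yields a unique solution. The same reasoning applies to (\ref{scheme1})b) with $\bar{\boldsymbol{\sigma}}=\boldsymbol{\sigma}^m_h$: its right-hand side is entirely known data, and testing produces $\frac{1}{\Delta t}\Vert\boldsymbol{\sigma}^m_h\Vert_{L^2}^2 + D_c\Vert\nabla\cdot\boldsymbol{\sigma}^m_h\Vert_{L^2}^2 + D_c\Vert\mbox{rot}\,\boldsymbol{\sigma}^m_h\Vert_{L^2}^2$, which is positive definite and, by the equivalent norm (\ref{EQs}), controls $\Vert\boldsymbol{\sigma}^m_h\Vert_{H^1}^2$.

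For the velocity--pressure block (\ref{scheme1})d)--e) I would invoke the discrete Babu\v{s}ka--Brezzi theory. Testing (\ref{scheme1})d) with $\bar{\mathbf{u}}=\mathbf{u}^m_h$ and using (\ref{a7}), namely $B(\mathbf{u}^{m-1}_h,\mathbf{u}^m_h,\mathbf{u}^m_h)=0$, shows that the velocity bilinear form is coercive on all of $\mathcal{X}_{\mathbf{u}}$, with lower bound $\frac{1}{\Delta t}\Vert\mathbf{u}^m_h\Vert_{L^2}^2 + \frac{D_{\mathbf{u}}}{\rho}\Vert\nabla\mathbf{u}^m_h\Vert_{L^2}^2$; combined with the assumed discrete inf-sup condition (\ref{LBB}) for the pair $[\mathcal{X}_{\mathbf{u}},\mathcal{X}_\pi]$, the saddle-point problem admits a unique solution $[\mathbf{u}^m_h,\pi^m_h]$. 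Collecting the four existence-and-uniqueness statements closes the induction step and yields the theorem.

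The coercivity estimates are routine, since they follow directly from the skew-symmetric structure deliberately built into $A$ and $B$. The only point that needs an ingredient beyond elementary finite-dimensional linear algebra is the Stokes block, where the pressure is controlled exclusively through the inf-sup condition (\ref{LBB}); this is where the genuine analytic content sits, although in the present setting it is supplied by hypothesis.
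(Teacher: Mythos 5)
Your proposal is correct and follows essentially the same route as the paper: in both cases the linearity and finite dimensionality reduce everything to showing the homogeneous problem has only the trivial solution, which is obtained by testing each equation with its own unknown, killing the transport terms via the skew-symmetry properties (\ref{a7})--(\ref{a8}), and recovering the pressure through the discrete inf-sup condition (\ref{LBB}). Your additional observation that the scheme decouples into four independent subproblems at each time step, each handled by Lax--Milgram or Babu\v{s}ka--Brezzi, is a valid repackaging of the paper's difference-of-two-solutions argument rather than a different proof.
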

\begin{proof}
Taking into account that the scheme (\ref{scheme1}) is an algebraic linear system, it suffices to prove the uniqueness. For that, suppose that  there exist $[n^m_{h,1},c^m_{h,1},\boldsymbol{\sigma}^{m}_{h,1},\mathbf{u}^m_{h,1},\pi_{h,1}^m], [n^m_{h,2},c^m_{h,2},\boldsymbol{\sigma}^{m}_{h,2},\mathbf{u}^m_{h,2},\pi_{h,2}^m]\in\mathcal{X}_n \times \mathcal{X}_c \times \mathcal{X}_{\boldsymbol\sigma} \times \mathcal{X}_{\mathbf{u}}\times \mathcal{X}_\pi$ two possible solutions of the scheme (\ref{scheme1}).
Then defining $n^m_{h}=n^m_{h,1}-n^m_{h,2}$, $c^m_{h}=c^m_{h,1}- c^m_{h,2}$, $\boldsymbol{\sigma}^{m}_{h}=\boldsymbol{\sigma}^{m}_{h,1}-\boldsymbol{\sigma}^{m}_{h,2}$ $\mathbf{u}^m_{h}=\mathbf{u}^m_{h,1}-\mathbf{u}^m_{h,2}$ and $\pi_{h}^m=\pi_{h,1}^m - \pi_{h,2}^m$,
we have that $[n^m_h,c^m_h,\boldsymbol{\sigma}^{m}_h,\mathbf{u}^m_h,\pi_h^m]\in\mathcal{X}_n \times \mathcal{X}_c \times \mathcal{X}_{\boldsymbol\sigma} \times \mathcal{X}_{\mathbf{u}}\times \mathcal{X}_\pi$ satisfies
\begin{equation}
\left\{
\begin{array}
[c]{lll}%
\vspace{0.1 cm}
\displaystyle
(n^m_h,\bar{n})+\Delta tD_n (\nabla n^m_h,\nabla \bar{n})+\Delta t A(\mathbf{u}^{m-1}_h, n^m_h,\bar{n}) = 0,\\ \vspace{0.1 cm}
\displaystyle
( \boldsymbol{\sigma}^m_h,\bar{\boldsymbol{\sigma}})+\Delta t D_c(\nabla \cdot \boldsymbol{\sigma}^m_h,\nabla \cdot \bar{\boldsymbol{\sigma}})+\Delta t D_c (\mbox{rot }\boldsymbol{\sigma}^m_h,\mbox{rot }\bar{\boldsymbol{\sigma}})= 0,\\\vspace{0.1 cm}
\displaystyle
(c^m_h,\bar{c}) + \Delta t D_c (\nabla c^m_h,\nabla \bar{c})+\Delta t A(\mathbf{u}^{m-1}_h,c^m_h,\bar{c})=0,\\\vspace{0.1 cm} 
\rho (\mathbf{u}^m_h,\bar{\mathbf{u}})+\Delta t \rho B(\mathbf{u}^{m-1}_h,\mathbf{u}^m_h,\bar{\mathbf{u}})+\Delta t D_{\mathbf{u}}(\nabla \mathbf{u}^m_h,\nabla \bar{\mathbf{u}})-\Delta t(\pi^m_h,\nabla \cdot \bar{\mathbf{u}}) =0,\\
(\bar{\pi},\nabla \cdot {\mathbf{u}_h^m}) =0,
\end{array}
\right.  \label{modelf02clin}
\end{equation}
for all $[\bar{n},\bar{c},\bar{\boldsymbol{\sigma}},\bar{\mathbf{u}},\bar{\pi}] \in \mathcal{X}_n \times \mathcal{X}_c \times \mathcal{X}_{\boldsymbol\sigma} \times \mathcal{X}_{\mathbf{u}}\times \mathcal{X}_\pi$. Taking $[\bar{n},\bar{c},\bar{\boldsymbol \sigma},\bar{\mathbf{u}},\bar{\pi}]=[n^m_h,c^m_h,\boldsymbol{\sigma}^{m}_h,\mathbf{u}^m_h,\Delta t\pi_h^m]$ in (\ref{modelf02clin}), using the properties (\ref{a7})-(\ref{a8}), and adding, we obtain
\begin{equation*}
\left\{
\begin{array}
[c]{lll}%
\vspace{0.1 cm}
\displaystyle
\Vert n^m_h\Vert_{L^2}^2+\Delta tD_n \Vert \nabla n^m_h\Vert_{L^2}^2=0,\\ \vspace{0.1 cm}
\displaystyle
\Vert \boldsymbol{\sigma}^m_h\Vert_{L^2}^2+\Delta t D_c\Vert \nabla \cdot \boldsymbol{\sigma}^m_h\Vert_{L^2}^2+\Delta t D_c \Vert \mbox{rot }\boldsymbol{\sigma}^m_h\Vert_{L^2}^2= 0,\\\vspace{0.1 cm}
\displaystyle
\Vert c^m_h\Vert_{L^2}^2 + \Delta t D_c \Vert \nabla c^m_h\Vert_{L^2}^2=0,\\\vspace{0.1 cm} 
\rho \Vert \mathbf{u}^m_h\Vert_{L^2}^2+\Delta t D_{\mathbf{u}}\Vert \nabla \mathbf{u}^m_h\Vert_{L^2}^2 =0,
\end{array}
\right.  \label{uni2}
\end{equation*}
which implies that $[n^m_h,c^m_h,\boldsymbol{\sigma}^{m}_h,\mathbf{u}^m_h]=[0,0,\boldsymbol{0},\boldsymbol{0}]$. Finally, using that $\mathbf{u}^m_h=\boldsymbol{0}$ in (\ref{modelf02clin})$_4$, and using the discrete {\it inf-sup} condition (\ref{LBB}) we deduce that $\pi^m_h=0$. 
\end{proof}

\section{Uniform estimates and convergence}
In this section, we will obtain some uniform estimates for any solution of scheme (\ref{scheme1}) that will be used in the convergence analysis. Here, we focus on the two-dimensional case. Later, in subsection \ref{Sub3D}, we study the three-dimensional case. With this aim, we make the following inductive hypothesis:  there exists a positive constant $K>0$, independent of $m$, such that
\begin{equation}\label{IndHyp}
\Vert {\boldsymbol{\sigma}}^{m-1}_h\Vert_{H^1}\leq K, \qquad \forall m\geq 1.
\end{equation}
After the convergence analysis we verify the vality of (\ref{IndHyp}). It is worthwhile to remark that the use of induction hypothesis to deal with the convergence analysis of approximating schemes in nonlinear partial differential equations have been considered by several authors (see for instance, \cite{Cui, Douglas, Yang, Zhan-Zhu}). In \cite{Douglas} the authors analyze a numerical method for a model describing compressible miscible displacement in porous media, and consider an inductive hypothesis on the difference of pressures $\pi=p-p_h$ comming from the Darcy model, in norm $W^{1,\infty}.$ In the same spirit, by using inductive hypotheses, a superconvergence estimate of a Combined Mixed Finite Element and Discontinuous Galerkin Method for a Compressible Miscible Displacement Problem was obtained in \cite{Yang} (see also \cite{Cui}). In the context of the Keller-Segel system without fluid influence, in \cite{Zhan-Zhu}, an inductive hypothesis on  $\Vert  {\boldsymbol{\sigma}}^m_h\Vert_{W^{1,\infty}}$ is considered. Observe that the norm in (\ref{IndHyp}) is in $H^1(\Omega)$ instead of $W^{1,\infty}(\Omega).$\\

 Additionally, we will use the following discrete Gronwall lemma:
 \begin{lemm}\label{Diego2} (\cite[p. 369]{Hey})
 Assume that $\Delta t>0$ and $B,b^k, d^k, g^k, h^k\ge 0$ satisfy:
 	\begin{equation*}\label{e-Diego2-1}
 	d^{m+1}+ \Delta t\sum_{k=0}^{m} b^{k+1}  \le \Delta t \sum_{k=0}^{m} g^k \, d^k +\Delta t\sum_{k=0}^{m} h^k + B,  \quad \forall k \ge 0.
 	\end{equation*}
 	Then, it holds
 	\begin{equation*}\label{e-Diego2-2}
 	d^{m+1} + \Delta t \, \sum_{k=0}^{m} b^{k+1} \le \exp \left( \Delta t \, \sum_{k=0}^{m} g^k\right)
 	\, \left( \Delta t \, \sum_{k=0}^{m} h^k + B \right), \quad \forall k \ge 0.
 	\end{equation*}
 \end{lemm}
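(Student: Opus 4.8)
The statement is the classical discrete Gronwall inequality of Heywood--Rannacher type, and the plan is to reduce it to a one-step recursion and then iterate using the elementary bound $1+x\le e^{x}$ for $x\ge 0$. Throughout I write $G_m:=\Delta t\sum_{k=0}^m g^k$ and $H_m:=\Delta t\sum_{k=0}^m h^k+B$, noting that $H_m$ is nondecreasing in $m$ because $h^k,B\ge 0$.

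First I would discard the dissipative terms: since each $b^{k+1}\ge 0$, the hypothesis gives, for every $m\ge 0$,
\[
d^{m+1}\le S_m,\qquad S_m:=\Delta t\sum_{k=0}^m g^k d^k+H_m .
\]
The heart of the argument is a recursion for the majorant $S_m$. A direct computation of the increment yields $S_m-S_{m-1}=\Delta t\, g^m d^m+\Delta t\, h^m$; inserting the bound $d^m\le S_{m-1}$ (valid for $m\ge 1$) produces
\[
S_m\le (1+\Delta t\, g^m)\,S_{m-1}+\Delta t\, h^m .
\]

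I would then establish $S_m\le \exp(G_m)\,H_m$ by induction on $m$. Using $1+\Delta t g^m\le \exp(\Delta t g^m)$ together with $\exp(G_m)\ge 1$, the inductive step reads
\[
S_m\le \exp(\Delta t g^m)\exp(G_{m-1})H_{m-1}+\Delta t\, h^m\le \exp(G_m)\bigl(H_{m-1}+\Delta t\, h^m\bigr)=\exp(G_m)H_m .
\]
Finally, returning to the full hypothesis (now keeping the $b$-terms) and combining with this bound gives
\[
d^{m+1}+\Delta t\sum_{k=0}^m b^{k+1}\le S_m\le \exp(G_m)H_m,
\]
which is exactly the claimed estimate.

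The only delicate point, and the step I would treat most carefully, is the base case of the induction: closing it requires the initial datum to be dominated by the right-hand side, i.e. $d^0\le H_0$ (guaranteed as soon as $d^0\le B$). This is implicit in the applications, where $B$ collects the initial-data norms; with it, $S_0=\Delta t g^0 d^0+H_0\le(1+\Delta t g^0)H_0\le\exp(G_0)H_0$ and the recursion closes. Everything else is routine manipulation of nonnegative finite sums, and since the result is standard one may alternatively just invoke \cite[p.~369]{Hey}.
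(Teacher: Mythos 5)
The paper does not actually prove this lemma: it is quoted verbatim as a known result, with the proof delegated to the citation \cite[p.~369]{Hey}. Your self-contained argument is therefore a genuine addition rather than a rederivation, and it is correct: discarding the nonnegative $b$-terms to get $d^{m+1}\le S_m$, computing the one-step increment $S_m-S_{m-1}=\Delta t\,g^m d^m+\Delta t\,h^m$, feeding back $d^m\le S_{m-1}$, and closing the induction via $1+x\le e^x$ and the monotonicity of $H_m$ is exactly the standard route for the \emph{explicit} discrete Gronwall inequality (note that the version stated here differs from Heywood--Rannacher's Lemma~5.1, which is the implicit variant requiring $\Delta t\,g^k<1$; your argument correctly needs no such smallness condition). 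The one point you flag is real and worth flagging: as transcribed, the hypothesis constrains only $d^{1},d^{2},\dots$, so $d^0$ appears on the right-hand side for $m=0$ but is never controlled, and the conclusion can fail if $d^0$ is large (e.g.\ $\Delta t=1$, $B=h^k=b^k=0$, $g^0=1$, $d^0$ large forces $d^1\le g^0d^0$ but the asserted bound would give $d^1\le 0$). The implicit convention $d^0\le B$ is indeed satisfied in every application in the paper --- for instance in (\ref{nh3-a}) the constant $B$ contains $\|n^0_h\|^2_{L^2}$, which is precisely the $d^0$ of that application --- so your proof, with that base-case hypothesis made explicit, fully justifies the lemma as it is used.
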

 
\subsection{Uniform estimates in finite time}
We can prove the following uniform estimates in finite time for the discrete cell and chemical variables, $n^m_h$ and $c^m_h$, in weak norms.
\begin{lemm}{\bf (Uniform estimates for $n_h^m$)}\label{uen} 
Assume the inductive hypothesis (\ref{IndHyp}). Then $n^m_h$ is bounded in $ l^{\infty}(L^2)\cap l^{2}(H^1)$.
\end{lemm}
\begin{proof}
Testing (\ref{scheme1})$_a$ by $\bar{n}=2\Delta t n^m_h$, using the equality $(a-b,2a)=\vert a\vert^2-\vert b\vert^2+\vert a-b\vert^2$, taking into account the property (\ref{a8}) and using the fact that $\int_{\Omega} n^m_h=0$ (since $n^m_h\in \mathcal{X}_n$), we have  
\begin{equation}\label{nh1}
\Vert n^m_h\Vert^2_{L^2}-\Vert n^{m-1}_h \Vert^2_{L^2}+ \Vert n^m_h - n^{m-1}_h\Vert^2_{L^2}+{2D_n\Delta t}\Vert n^m_h\Vert^2_{H^1}= 2 \Delta t \chi((n^{m-1}_h+\alpha_0) {\boldsymbol{\sigma}}^{m-1}_h,\nabla n^m_h).
\end{equation} 
Using the H\"older and Young inequalities and the 2D interpolation inequality (\ref{in2D}), we get
\begin{eqnarray}\label{nh2}
&2 \Delta t \chi((n^{m-1}_h+\alpha_0) {\boldsymbol{\sigma}}^{m-1}_h,\nabla n^m_h)&\!\!\!\!\leq  C\Delta t \chi ( \Vert n^{m-1}_h  \Vert_{L^2}^{1/2} \Vert n^{m-1}_h  \Vert_{H^1}^{1/2}+ \alpha_0)\Vert   {\boldsymbol{\sigma}}^{m-1}_h\Vert_{L^4} \Vert \nabla n^m_h\Vert_{L^2}  \nonumber\\
&&\hspace{-5 cm} \leq D_n{\Delta t}\Vert n^m_h\Vert^2_{H^1}+\frac{D_n\Delta t}{2}\Vert  n^{m-1}_h\Vert^2_{H^1}+ \frac{C\Delta t\chi^2}{D_n} \Vert   {\boldsymbol{\sigma}}^{m-1}_h\Vert_{L^4}^2 \Big(\frac{\chi^2}{D_n^2}\Vert n^{m-1}_h \Vert_{L^2}^2\Vert   {\boldsymbol{\sigma}}^{m-1}_h\Vert_{L^4}^2 + \alpha_0^2\Big).
\end{eqnarray}
Thus, taking into account the inductive hypothesis (\ref{IndHyp}), from (\ref{nh1})-(\ref{nh2}) we arrive at
\begin{equation}\label{nh3}
\Vert n^m_h\Vert^2_{L^2}-\|n^{m-1}_h\|^2_{L^2}
+
{D_n\Delta t}\Vert n^m_h\Vert^2_{H^1} - \frac{D_n\Delta t}{2}\Vert n^{m-1}_h\Vert^2_{H^1}
\leq
 C\frac{\chi^4}{D_n^3}\Delta t \Vert n^{m-1}_h \Vert_{L^2}^2+  C\frac{\chi^2}{D_n}\Delta t \alpha_0^2.
\end{equation}
 Then, summing in (\ref{nh3})  from $m=1$ to $m=r$, we arrive at
 \begin{eqnarray}\label{nh3-a}
& \Vert n^r_h\Vert^2_{L^2}&\!\!\!\!
 +
 \frac{D_n}{2}\Delta t\sum_{m=1}^r\Vert n^m_h\Vert^2_{H^1} + \frac{D_n}{2}\Delta t\Vert n^r_h\Vert^2_{H^1} \nonumber\\
 &&
 \leq \|n^0_h\|^2_{L^2}+  \frac{D_n\Delta t}{2}\Vert n^0_h\Vert^2_{H^1}+
 C\frac{\chi^4}{D_n^3}\Delta t\sum_{m=1}^{r} \Vert n^{m-1}_h \Vert_{L^2}^2+  C\frac{\chi^2}{D_n}\Delta t \sum_{m=1}^{r}\alpha_0^2.
 \end{eqnarray}
 Therefore, applying Lemma \ref{Diego2} to (\ref{nh3-a}) we deduce
$$
\Vert n^r_h\Vert^2_{L^2} +
{\Delta t}\sum_{m=1}^{r}\Vert n^m_h\Vert^2_{H^1}
\leq  C, \ \ \forall m\geq1, 
$$
where the constant $C>0$ depends on $(\chi, D_n,\alpha_0,T,n_0)$, but is independent of $(\Delta t, h)$ and $r$;  thus we conclude the proof.
\end{proof}

\begin{lemm}{\bf (Uniform estimates for $c_h^m$)}\label{uec}
	Assume the inductive hypothesis (\ref{IndHyp}). Then, $c^m_h$ is bounded in $ l^{\infty}(L^2)\cap l^{2}(H^1)$.
\end{lemm}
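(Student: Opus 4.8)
The proof runs parallel to that of Lemma~\ref{uen}, the only genuine novelty being the reaction term that couples $c$ with the cell density. First I would test (\ref{scheme1})$_c$ with $\bar c=2\Delta t\,c^m_h$, apply the identity $(a-b,2a)=\Vert a\Vert_{L^2}^2-\Vert b\Vert_{L^2}^2+\Vert a-b\Vert_{L^2}^2$ to the discrete time derivative, and use the skew-symmetry property (\ref{a8}), which makes $A(\mathbf{u}^{m-1}_h,c^m_h,c^m_h)=0$. This produces the discrete energy identity
\begin{equation*}
\Vert c^m_h\Vert_{L^2}^2-\Vert c^{m-1}_h\Vert_{L^2}^2+\Vert c^m_h-c^{m-1}_h\Vert_{L^2}^2+2D_c\Delta t\Vert\nabla c^m_h\Vert_{L^2}^2=-2\gamma\Delta t\,((n^{m-1}_h+\alpha_0)c^{m-1}_h,c^m_h),
\end{equation*}
so that the whole estimate reduces to controlling the reaction term on the right-hand side.

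To bound this term I would feed in the estimates already obtained in Lemma~\ref{uen}, namely that $n^m_h$ is bounded in $l^\infty(L^2)\cap l^2(H^1)$; in particular $\Vert n^{m-1}_h+\alpha_0\Vert_{L^2}\le C$ uniformly in $m$. Writing $c^m_h=c^{m-1}_h+(c^m_h-c^{m-1}_h)$ keeps every factor of $c$ at the lagged level $m-1$ except the increment. For the lagged part $-2\gamma\Delta t\,((n^{m-1}_h+\alpha_0)c^{m-1}_h,c^{m-1}_h)$ I apply H\"older with exponents $(2,4,4)$ together with the 2D interpolation inequality (\ref{in2D}) to obtain a bound of the form $C\Delta t\,\Vert n^{m-1}_h+\alpha_0\Vert_{L^2}\Vert c^{m-1}_h\Vert_{L^2}\Vert c^{m-1}_h\Vert_{H^1}$; a Young inequality then separates a small multiple $\tfrac{D_c\Delta t}{4}\Vert c^{m-1}_h\Vert_{H^1}^2$ from a Gronwall term $C\Delta t\Vert c^{m-1}_h\Vert_{L^2}^2$ with a \emph{constant} coefficient. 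For the increment part I use Cauchy--Schwarz and Young with an extra factor $\Delta t$, absorbing $\tfrac12\Vert c^m_h-c^{m-1}_h\Vert_{L^2}^2$ into the left-hand side and leaving $C\Delta t^2\Vert(n^{m-1}_h+\alpha_0)c^{m-1}_h\Vert_{L^2}^2$. The $H^1$-contributions of $c$ are finally absorbed into the diffusion $2D_c\Delta t\Vert\nabla c^m_h\Vert_{L^2}^2$ by the same telescoping used in (\ref{nh3})--(\ref{nh3-a}), after splitting $\Vert c^{m-1}_h\Vert_{H^1}^2\le\Vert\nabla c^{m-1}_h\Vert_{L^2}^2+|\Omega|\,\Vert c^{m-1}_h\Vert_{L^2}^2$ via the equivalent norm (\ref{EQu}).

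The step I expect to be the main obstacle is controlling the cell-density coefficient that survives in the increment part, since after interpolation it takes the form $\Vert n^{m-1}_h+\alpha_0\Vert_{L^4}^2\Vert c^{m-1}_h\Vert_{L^4}^2\le C(1+\Vert n^{m-1}_h\Vert_{H^1})\Vert c^{m-1}_h\Vert_{L^2}\Vert c^{m-1}_h\Vert_{H^1}$, whose $H^1$-factor in $n$ is only $l^2$-bounded and not uniform in $m$. This is precisely where both bounds of Lemma~\ref{uen} are needed: after Young the associated Gronwall coefficient is $g^{m-1}\sim\tfrac{C\Delta t^2}{\epsilon}(1+\Vert n^{m-1}_h\Vert_{H^1})^2$, and the extra $\Delta t^2$ combined with the $l^2(H^1)$ bound $\Delta t\sum_m\Vert n^{m-1}_h\Vert_{H^1}^2\le C$ gives $\Delta t\sum_m g^{m-1}\le C$, so that the factor $\exp(\Delta t\sum_m g^{m-1})$ in Lemma~\ref{Diego2} stays bounded independently of $(\Delta t,h)$ and $r$.

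Finally I would sum the resulting inequality from $m=1$ to $m=r$, estimate the boundary terms created by the telescoping with the initialization bound $\Vert c^0_h\Vert_{H^1}\le\Vert c_0\Vert_{H^1}$ coming from (\ref{aprox01-aNN}), and apply the discrete Gronwall Lemma~\ref{Diego2} with $d^m=\Vert c^m_h\Vert_{L^2}^2$ and $b^m$ a fixed fraction of $D_c\Vert\nabla c^m_h\Vert_{L^2}^2$. This yields $\Vert c^r_h\Vert_{L^2}^2+\Delta t\sum_{m=1}^r\Vert c^m_h\Vert_{H^1}^2\le C$, with $C$ depending on $(\gamma,D_c,\alpha_0,T,c_0)$ and on the constant of Lemma~\ref{uen} but independent of $(\Delta t,h)$ and $r$; that is, $c^m_h$ is bounded in $l^\infty(L^2)\cap l^2(H^1)$.
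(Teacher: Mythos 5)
Your proposal is correct and follows essentially the same route as the paper: test (\ref{scheme1})$_c$ with $2\Delta t\,c^m_h$, control the reaction term via H\"older, Young and the 2D interpolation inequality (\ref{in2D}) using both the $l^\infty(L^2)$ and $l^2(H^1)$ bounds from Lemma \ref{uen}, and conclude with the discrete Gronwall Lemma \ref{Diego2}. The only (immaterial) difference is bookkeeping: you split $c^m_h=c^{m-1}_h+(c^m_h-c^{m-1}_h)$ inside the inner product and distribute the $L^4$/$L^2$ norms between $n^{m-1}_h+\alpha_0$ and $c^{m-1}_h$ slightly differently, whereas the paper keeps $\Vert c^m_h\Vert_{L^4}$ and performs the equivalent split after taking norms, arriving at the same Gronwall structure.
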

\begin{proof}
Testing (\ref{scheme1})$_c$ by $\bar{c}=2\Delta t c^m_h$, using the equality $(a-b,2a)=\vert a\vert^2-\vert b\vert^2+\vert a-b\vert^2$ and taking into account the property (\ref{a8}), we obtain
	\begin{equation}\label{ch1}
	\begin{array}{l}
	\Vert c^m_h\Vert^2_{L^2}-\Vert c^{m-1}_h\Vert^2_{L^2} + 	\Vert c^m_h- c^{m-1}_h\Vert^2_{L^2}+{2D_c\Delta t}\Vert \nabla c^m_h\Vert^2_{L^2}= -2\gamma\Delta t((n^{m-1}_h+\alpha_0) c^{m-1}_h,c^m_h).
	\end{array}
	\end{equation} 
Using the H\"older and Young inequalities,  we have
\begin{eqnarray}\label{ch2}
&2\gamma\Delta t((n^{m-1}_h+\alpha_0) c^{m-1}_h,c^m_h)&\!\!\! \leq 2 \gamma\Delta t\Vert n^{m-1}_h+\alpha_0\Vert_{L^4} \Vert c^{m-1}_h\Vert_{L^2} \Vert c^m_h\Vert_{L^4}\nonumber\\
&& \!\!\! \leq   C \gamma\Delta t \Vert n^{m-1}_h+\alpha_0\Vert_{L^4} \Vert c^{m-1}_h\Vert_{L^2}( \Vert \nabla c^m_h\Vert_{L^2} + \Vert c^m_h\Vert_{L^2})  \nonumber\\
&& \!\!\! \leq   D_c \Delta t \Vert \nabla c^m_h\Vert_{L^2}^2 + \frac{1}{2} \Vert c^m_h - c^{m-1}_h\Vert_{L^2}^2 + \Delta t \Vert c^{m-1}_h\Vert_{L^2}^2\nonumber\\
&& +  C\gamma^2\Delta t \left[\frac{1}{D_c}+ \Delta t +1\right]\Vert n^{m-1}_h+\alpha_0\Vert_{L^4}^2 \Vert c^{m-1}_h\Vert_{L^2}^2. 
\end{eqnarray}
Therefore, from (\ref{ch1}) and (\ref{ch2}), we arrive at
\begin{eqnarray}\label{ch3}
	&\Vert c^m_h\Vert^2_{L^2}&\!\!\!\!-\Vert c^{m-1}_h\Vert^2_{L^2} + \frac{1}{2}	\Vert c^m_h- c^{m-1}_h\Vert^2_{L^2}+{D_c\Delta t}\Vert \nabla c^m_h\Vert^2_{L^2}\nonumber\\
&&\!\!\!\!	\leq
C\gamma^2\Delta t \left[\frac{1}{D_c}+ \Delta t +1 \right]\Vert n^{m-1}_h+\alpha_0\Vert_{L^4}^2 \Vert c^{m-1}_h\Vert_{L^2}^2 + \Delta t \Vert c^{m-1}_h\Vert_{L^2}^2.
\end{eqnarray}
Then, summing in (\ref{ch3}) from $m=1$ to $m=r$, applying Lemma \ref{Diego2} and taking into account that $n^m_h$ is bounded in $l^2(H^1)$ (see Lemma \ref{uen}), we arrive at 
$$
\Vert c^r_h\Vert^2_{L^2} +
{\Delta t}\sum_{m=1}^{r}\Vert \nabla  c^m_h\Vert^2_{L^2}
\leq  C, \ \ \forall m\geq1, 
$$
the constant $C>0$ depends on $(\chi, D_c,D_n,\gamma,\alpha_0,T,n_0,c_0)$, but is independent of $(\Delta t, h)$ and $r$.
\end{proof}

%%%%%%%%%%%%%%%%%%%%%%%%%%%%%%%%%%%%%%%%%%%%%%%%%%%NEW11111111%%%%%%%%%%%%%%%%%
\subsection{Error estimates in weak norms}\label{ESWN}
The aim of this subsection is to obtain optimal error estimates for any solution $[n^m_h, c^m_h,{\boldsymbol\sigma}^m_h,\mathbf{u}^m_h,\pi^m_h]$ of the scheme (\ref{scheme1}), with respect to a su\-ffi\-cien\-tly regular solution $[n, c,{\boldsymbol\sigma},\mathbf{u},\pi]$ of  (\ref{Chemoweak3}) in the two dimensional case. We start by introducing the following notations for the errors at $t=t_{m}$:  $e_n^m=n^m-n^m_h$, $e_c^m=c^m-c^m_h$, $e_{\boldsymbol\sigma}^m={\boldsymbol\sigma}^m-{\boldsymbol\sigma}^m_h$, $e_{\mathbf{u}}^m=\mathbf{u}^m-\mathbf{u}^m_h$ and $e_\pi^m=\pi^m-\pi^m_h$, where $w^{m}$ denote, in general, the value of $w$ at time $t_{m}$. Then, subtracting the scheme (\ref{scheme1}) to (\ref{Chemoweak3}) at $t=t_m$, we obtain that $[e_n^m,e^m_c,e_{{\boldsymbol \sigma}}^m,e^m_{\mathbf{u}},e^m_\pi]$ satisfies
\begin{eqnarray}\label{errn}
&(\delta_t e_n^m,&\!\!\!\!\bar{n}) + D_n(\nabla e_n^m, \nabla \bar{n})+A(\mathbf{u}^m-\mathbf{u}^{m-1},n^m,\bar{n})  +A(\mathbf{u}^{m-1}_h,e^m_n,\bar{n}) + A(e^{m-1}_{\mathbf{u}},n^m,\bar{n})=(\omega_n^m,\bar{n})\nonumber\\
&&\!\!\!\!\!\!\!\!\!\!\! +\chi((n^{m-1}\!+\alpha_0)({\boldsymbol{\sigma}}^m- {\boldsymbol{\sigma}}^{m-1})+ (n^m-n^{m-1}){\boldsymbol{\sigma}}^m+(n^{m-1}\!+\alpha_0)e^{m-1}_{\boldsymbol{\sigma}}+ e^{m-1}_n {\boldsymbol{\sigma}}^{m-1}_h,\nabla \bar{n}),
\end{eqnarray}
\begin{eqnarray}\label{errs}
&\left(\delta_t e_{\boldsymbol\sigma}^m,\bar{\boldsymbol \sigma}\right)&\!\!\!\!+ D_c(\nabla \cdot e_{\boldsymbol\sigma}^m, \nabla \cdot\bar{\boldsymbol \sigma}) + D_c(\mbox{rot }e_{\boldsymbol\sigma}^m, \mbox{rot }\bar{\boldsymbol \sigma})  = (\omega_{\boldsymbol\sigma}^m,\bar{\boldsymbol \sigma})+((\mathbf{u}^m-\mathbf{u}^{m-1})\cdot {\boldsymbol\sigma}^m, \nabla \cdot\bar{\boldsymbol \sigma}) \nonumber\\
&&\!\!\!\!\!\! + (\mathbf{u}^{m-1}({\boldsymbol{\sigma}}^m-{\boldsymbol{\sigma}}^{m-1})+ e^{m-1}_{\mathbf{u}}{\boldsymbol{\sigma}}^{m-1}+\mathbf{u}^{m-1}_h e^{m-1}_{\boldsymbol{\sigma}}+ \gamma(n^{m}\!-n^{m-1})c^m,\nabla \cdot\bar{\boldsymbol \sigma})\nonumber\\
&&\!\!\!\!\!\! + \gamma((n^{m-1}\!+\alpha_0)(c^m \!- c^{m-1}) + e^{m-1}_n c^{m-1} + (n^{m-1}_h\!+\alpha_0)e^{m-1}_c,\nabla \cdot\bar{\boldsymbol \sigma}),
\end{eqnarray}
\begin{eqnarray}\label{errc}
&\left(\delta_t e_c^m,\bar{c}\right) &\!\!\!\!+ D_c(\nabla e_c^m, \nabla \bar{c})+A(\mathbf{u}^m-\mathbf{u}^{m-1},c^m,\bar{c})  +A(\mathbf{u}^{m-1}_h,e^m_c,\bar{c}) + A(e^{m-1}_{\mathbf{u}},c^m,\bar{c})=(\omega_c^m,\bar{c})\nonumber\\
&&\hspace{-0.8 cm}  - \gamma((n^{m}\!-n^{m-1})c^m + (n^{m-1}\!+\alpha_0)(c^m \!- c^{m-1}) + e^{m-1}_n c^{m-1} + (n^{m-1}_h\!+\alpha_0)e^{m-1}_c, \bar{c}),
\end{eqnarray}
\begin{eqnarray}\label{erru}
&\left(\delta_t e_{\mathbf{u}}^m,\bar{\mathbf{u}}\right) &\!\!\!\!+ \frac{D_{\mathbf{u}}}{\rho}(\nabla e_{\mathbf{u}}^m, \nabla \bar{\mathbf{u}})= (\omega_{\mathbf{u}}^m,\bar{\mathbf{u}}) - B(\mathbf{u}^m-\mathbf{u}^{m-1},{\mathbf{u}}^m,\bar{\mathbf{u}})   -B(e^{m-1}_{\mathbf{u}},{\mathbf{u}}^m,\bar{\mathbf{u}})\nonumber\\
&&\hspace{-0.8 cm} -B(\mathbf{u}^{m-1}_h,e^m_{\mathbf{u}},\bar{\mathbf{u}}) +\frac{1}{\rho}(e^m_\pi,\nabla \cdot \bar{\mathbf{u}}) + \frac{1}{\rho}((n^m-n^{m-1})\nabla \phi,\bar{\mathbf{u}})+ \frac{1}{\rho}(e^{m-1}_n \nabla \phi,\bar{\mathbf{u}}),
\end{eqnarray}
\begin{equation}\label{errpi}
(\bar{\pi},\nabla \cdot e^m_{\mathbf{u}})=0,
\end{equation}
for all $[\bar{n},\bar{c},\bar{\boldsymbol \sigma},\bar{\mathbf{u}},\bar{\pi}]\in \mathcal{X}_n\times\mathcal{X}_c\times \mathcal{X}_{\boldsymbol \sigma}\times \mathcal{X}_{\mathbf{u}}\times \mathcal{X}_\pi$, where $\omega_n^m,\omega_c^m,\omega^m_{\boldsymbol{\sigma}},\omega^m_{\mathbf{u}}$ are the consistency errors associated to the scheme (\ref{scheme1}), that is, $\omega_n^m=\delta_t n^m - (n^m)_t$ and so on. \\

{\underline{{\it 1. Error estimate for the cell density} $n$}}\\

Considering the interpolation operators $\mathbb{P}_n, \mathbb{P}_{\boldsymbol{\sigma}},\mathbb{P}_{\mathbf{u}}$ defined in (\ref{Interp1New}) and (\ref{StokesOp}), we decompose the total errors $e_n^m, e_{\boldsymbol{\sigma}}^m,e_{\mathbf{u}}^m$  as follows:
\begin{equation}\label{u1a}
e_n^m=(n^m -\mathbb{P}_n n^m) + ( \mathbb{P}_n n^m - n^m_h )=\theta^m_n+\xi^m_n,
\end{equation}
\begin{equation}\label{u1asig}
e_{\boldsymbol{\sigma}}^m=({\boldsymbol{\sigma}}^m -\mathbb{P}_{\boldsymbol{\sigma}} {\boldsymbol{\sigma}}^m) + ( \mathbb{P}_{\boldsymbol{\sigma}} {\boldsymbol{\sigma}}^m - {\boldsymbol{\sigma}}^m_h )=\theta^m_{\boldsymbol{\sigma}}+\xi^m_{\boldsymbol{\sigma}},
\end{equation}
\begin{equation}\label{u1au}
e_{\mathbf{u}}^m=(\mathbf{u}^m -\mathbb{P}_{\mathbf{u}} \mathbf{u}^m) + ( \mathbb{P}_{\mathbf{u}} \mathbf{u}^m - \mathbf{u}^m_h )=\theta^m_{\mathbf{u}}+\xi^m_{\mathbf{u}}.
\end{equation}
Then, taking into account (\ref{Interp1New})$_1$, from (\ref{errn}) and (\ref{u1a})-(\ref{u1au}) we have
\begin{eqnarray}\label{errn-int}
&(\delta_t &\!\!\!\!\! \xi_n^m,\bar{n}) + D_n(\nabla \xi_n^m, \nabla \bar{n}) +A(\mathbf{u}^{m-1}_h,\xi^m_n,\bar{n}) =(\omega_n^m,\bar{n})- \left(\delta_t \theta_n^m,\bar{n}\right)-A(\mathbf{u}^{m-1}_h,\theta^m_n,\bar{n}) \nonumber\\
&&-A(\mathbf{u}^m-\mathbf{u}^{m-1},n^m,\bar{n}) - A((\xi^{m-1}_{\mathbf{u}}+\theta^{m-1}_{\mathbf{u}}),n^m,\bar{n})  +\chi((n^{m-1}\!+\alpha_0)({\boldsymbol{\sigma}}^m - {\boldsymbol{\sigma}}^{m-1}),\nabla \bar{n})\nonumber\\
&& +\chi( (n^m-n^{m-1}){\boldsymbol{\sigma}}^m+(n^{m-1}\!+\alpha_0)(\xi^{m-1}_{\boldsymbol{\sigma}}+\theta^{m-1}_{\boldsymbol{\sigma}})+ (\xi^{m-1}_n+\theta^{m-1}_n) {\boldsymbol{\sigma}}^{m-1}_h,\nabla \bar{n}).
\end{eqnarray}
Taking $\bar{n}=\xi_n^m$ in (\ref{errn-int}), using (\ref{a8}) and taking into account that $\int_\Omega \xi_n^m=0$ (since $\xi_n^m\in \mathcal{X}_n$), we get
\begin{eqnarray}\label{errn-int2}
&\displaystyle\frac{1}{2}&\!\!\!\!\!\delta_t  \Vert \xi_n^m\Vert_{L^2}^2 + \frac{\Delta t}{2} \Vert \delta_t \xi_n^m\Vert_{L^2}^2 + D_n\Vert\xi_n^m\Vert_{H^1}^2 = (\omega_n^m,\xi_n^m)- \left(\delta_t \theta_n^m,\xi_n^m\right)-A(\mathbf{u}^{m-1}_h,\theta^m_n,\xi_n^m) \nonumber\\
&&\!\!\!\!\!\!\!\!\!-A(\mathbf{u}^m-\mathbf{u}^{m-1},n^m,\xi_n^m) - A((\xi^{m-1}_{\mathbf{u}}+\theta^{m-1}_{\mathbf{u}}),n^m,\xi_n^m)  +\chi((n^{m-1}\!+\alpha_0)({\boldsymbol{\sigma}}^m - {\boldsymbol{\sigma}}^{m-1}),\nabla \xi_n^m)\nonumber\\
&&\!\!\!\!\!\!\!\!\! +\chi( (n^m-n^{m-1}){\boldsymbol{\sigma}}^m+(n^{m-1}\!+\alpha_0)(\xi^{m-1}_{\boldsymbol{\sigma}}+\theta^{m-1}_{\boldsymbol{\sigma}})+ (\xi^{m-1}_n+\theta^{m-1}_n) {\boldsymbol{\sigma}}^{m-1}_h,\nabla \xi_n^m)\nonumber\\
&&\!\!\!\!\!\!\!\!\! =\sum_{k=1}^9 I_k. 
\end{eqnarray}
Then, using the H\"older and Young inequalities, (\ref{StabStk1})-(\ref{StabStk2}), (\ref{aprox01}) and (\ref{aprox01-a}),  we control the terms on the right hand side of (\ref{errn-int2}) as follows:
\begin{equation}\label{Ea1a}
I_1\leq \displaystyle\frac{D_n}{12} \Vert \xi_n^m\Vert_{H^1}^2 + \frac{C}{D_n}\Vert \omega_n^m\Vert_{(H^1)'}^2\leq \displaystyle\frac{D_n}{12}\Vert\xi_n^m\Vert_{H^1}^2+\frac{C\Delta t}{D_n} \int_{t_{m-1}}^{t_m}\Vert n_{tt}(t)\Vert_{(H^1)'}^2 dt,
\end{equation}
\begin{eqnarray}\label{Ea1b}
&I_2&\!\!\!\leq  \Vert \xi_n^m\Vert_{L^2} \Vert(\mathcal{I} - \mathbb{P}_n) \delta_t n^m \Vert_{L^2} \leq \displaystyle\frac{D_n}{12} \Vert \xi_n^m\Vert_{L^2}^2+\frac{C h^{2(r_1+1)}}{D_n}\Vert \delta_t n^m\Vert_{H^{r_1+1}}^2\nonumber\\
&&\!\!\!\leq \displaystyle\frac{D_n}{12} \Vert \xi_n^m\Vert_{L^2}^2+\displaystyle\frac{C h^{2(r_1+1)}}{D_n \Delta t }\int_{t_{m-1}}^{t_m}\Vert n_t \Vert_{H^{r_1+1}}^2  dt,
\end{eqnarray}
\begin{eqnarray}\label{Ea1c}
&I_3&\!\!\! = A(\xi_{\mathbf{u}}^{m-1},\theta^m_n,\xi^m_n) -A(\mathbb{P}_{\mathbf{u}} \mathbf{u}^{m-1},\theta^m_n,\xi^m_n)  \nonumber\\
&&\!\!\! \leq  \Vert \xi_{\mathbf{u}}^{m-1}\Vert_{L^2}   \Vert \theta_n^m\Vert_{L^\infty\cap W^{1,3}} \Vert \xi_n^m\Vert_{H^1} + \Vert \mathbb{P}_{\mathbf{u}} \mathbf{u}^{m-1}\Vert_{L^\infty\cap W^{1,3}}   \Vert \theta_n^m\Vert_{L^2} \Vert \xi_n^m\Vert_{H^1} \nonumber\\
&&\!\!\! \leq \displaystyle\frac{D_n}{12} \Vert \xi_n^m\Vert_{H^1}^2+\frac{C}{D_n} \Vert n^m\Vert_{H^2}^2 \Vert \xi_{\mathbf{u}}^{m-1}\Vert_{L^2}^2    + \displaystyle\frac{C}{D_n} h^{2(r_1 +1)} \Vert [\mathbf{u}^{m-1},\pi^{m-1}]\Vert_{H^2\times H^1}^2   \Vert n^m\Vert_{H^{r_1 + 1}}^2,
\end{eqnarray}
\begin{eqnarray}\label{Ea1d}
&I_4 + I_5&\!\!\! \leq (\Vert \mathbf{u}^m-\mathbf{u}^{m-1}\Vert_{L^2}+\Vert \xi_{\mathbf{u}}^{m-1}\Vert_{L^2}+\Vert \theta_{\mathbf{u}}^{m-1}\Vert_{L^2}  )  \Vert n^m\Vert_{L^\infty\cap W^{1,3}} \Vert \xi_n^m\Vert_{H^1}  \nonumber\\
&&\!\!\! \leq \displaystyle\frac{D_n}{12} \Vert \xi_n^m\Vert_{H^1}^2+ \frac{C}{D_n} \Vert \mathbf{u}^m-\mathbf{u}^{m-1}\Vert_{L^2}^2  \Vert n^m\Vert_{H^2}^2\nonumber\\
&&+\displaystyle\frac{C}{D_n} (h^{2(r+1)} \|[\mathbf{u}^{m-1},\pi^{m-1}]\|_{H^{r+1}\times H^{r}}^2+\Vert \xi_{\mathbf{u}}^{m-1}\Vert_{L^2}^2) \Vert n^m\Vert_{H^2}^2,
\end{eqnarray}
\begin{eqnarray}\label{Ea1e}
&\displaystyle\sum_{k=6}^8 I_k&\!\!\! \leq \chi\Vert [{\boldsymbol{\sigma}}^m\! -\! {\boldsymbol{\sigma}}^{m-1}\!, n^m \!- \!n^{m-1}\!, \xi^{m-1}_{\boldsymbol{\sigma}}, \theta^{m-1}_{\boldsymbol{\sigma}}]\Vert_{L^2}  \Vert[n^{m-1}+\alpha_0,{\boldsymbol{\sigma}}^m]\Vert_{L^\infty} \Vert \xi_n^m\Vert_{H^1}  \nonumber\\
&&\!\!\! \leq \displaystyle\frac{D_n}{12} \Vert \xi_n^m\Vert_{H^1}^2 +\frac{C\chi^2}{D_n} (\Vert [{\boldsymbol{\sigma}}^m\! -\! {\boldsymbol{\sigma}}^{m-1}\!, n^m \!- \!n^{m-1}]\Vert_{L^2}^2 +\Vert \xi^{m-1}_{\boldsymbol{\sigma}}\Vert_{L^2}^2) \Vert[n^{m-1}+\alpha_0,{\boldsymbol{\sigma}}^m]\Vert_{L^\infty}^2\nonumber\\
&&+\displaystyle\frac{C\chi^2}{D_n}h^{2(r_3+1)} \|{\boldsymbol{\sigma}}^{m-1}\|_{ H^{r_3+1}}^2 \Vert[n^{m-1}\!+\alpha_0,{\boldsymbol{\sigma}}^m]\Vert_{L^\infty}^2,
\end{eqnarray}
\begin{eqnarray}\label{Ea1e-new}
& I_9&\!\!\! = \chi (\xi^{m-1}_n {\boldsymbol{\sigma}}^{m-1}_h,\nabla \xi_n^m) - \chi(\theta^{m-1}_n \xi_{\boldsymbol{\sigma}}^{m-1},\nabla \xi_n^m)+ \chi(\theta^{m-1}_n \mathbb{P}_{\boldsymbol{\sigma}}{\boldsymbol{\sigma}}^{m-1},\nabla \xi_n^m) \nonumber\\
&&\!\!\! \leq \chi C(\Vert \xi^{m-1}_{n}\Vert_{L^2}^{1/2}\Vert \xi^{m-1}_{n}\Vert_{H^1}^{1/2} \Vert{\boldsymbol{\sigma}}^{m-1}_h\Vert_{L^4}+ \Vert \xi_{\boldsymbol\sigma}^{m-1}\Vert_{L^2}   \Vert \theta_n^{m-1}\Vert_{L^\infty}  + \Vert \mathbb{P}_{\boldsymbol{\sigma}} \boldsymbol{\sigma}^{m-1}\Vert_{L^\infty}  \Vert \theta_n^{m-1}\Vert_{L^2}) \Vert \xi_n^m\Vert_{H^1}  \nonumber\\
&&\!\!\! \leq \displaystyle\frac{D_n}{12} \Vert \xi_n^m\Vert_{H^1}^2 +  \displaystyle\frac{D_n}{8} \Vert \xi^{m-1}_{n}\Vert_{H^1}^{2}+ \frac{C\chi^4}{D_n^3} \Vert \xi^{m-1}_{n}\Vert_{L^2}^{2}  \nonumber\\
&&+\frac{C\chi^2}{D_n} \Vert n^{m-1}\Vert_{H^2}^2 \Vert \xi_{\boldsymbol{\sigma}}^{m-1}\Vert_{L^2}^2    + \displaystyle\frac{C\chi^2 }{D_n} h^{2(r_1 +1)} \Vert {\boldsymbol{\sigma}}^{m-1}\Vert_{H^2}^2   \Vert n^{m-1}\Vert_{H^{r_1 + 1}}^2,
\end{eqnarray}
where, in the first and second inequalities of (\ref{Ea1e-new}), the 2D interpolation inequality (\ref{in2D}) and the inductive hypothesis (\ref{IndHyp}) were used. Therefore, from (\ref{errn-int2})-(\ref{Ea1e-new}), we arrive at
\begin{eqnarray}\label{errnfin}
&\displaystyle\frac{1}{2}&\!\!\!\!\!\delta_t  \Vert \xi_n^m\Vert_{L^2}^2 + \frac{\Delta t}{2} \Vert \delta_t \xi_n^m\Vert_{L^2}^2 + \frac{D_n}{2}\Vert\xi_n^m\Vert_{H^1}^2 -  \displaystyle\frac{D_n}{8} \Vert \xi^{m-1}_{n}\Vert_{H^1}^{2} \leq   \displaystyle\frac{C h^{2(r_1+1)}}{D_n \Delta t }\int_{t_{m-1}}^{t_m}\Vert n_t \Vert_{H^{r_1+1}}^2  dt \nonumber\\
&&\!\!\!\!\!\!\!\!\!+\frac{C\Delta t}{D_n} \int_{t_{m-1}}^{t_m}\Vert n_{tt}(t)\Vert_{(H^1)'}^2 dt+\frac{C}{D_n} \Vert n^m\Vert_{H^2}^2 \Vert \xi_{\mathbf{u}}^{m-1}\Vert_{L^2}^2  + \displaystyle\frac{C}{D_n} h^{2(r_1 +1)} \Vert [\mathbf{u}^{m-1},\pi^{m-1}]\Vert_{H^2\times H^1}^2   \Vert n^m\Vert_{H^{r_1 + 1}}^2\nonumber\\
&&\!\!\!\!\!\!\!\!\!+ \frac{C}{D_n} \Vert \mathbf{u}^m-\mathbf{u}^{m-1}\Vert_{L^2}^2  \Vert n^m\Vert_{H^2}^2+\displaystyle\frac{C}{D_n} h^{2(r+1)} \|[\mathbf{u}^{m-1},\pi^{m-1}]\|_{H^{r+1}\times H^{r}}^2 \Vert n^m\Vert_{H^2}^2\nonumber\\
&&\!\!\!\!\!\!\!\!\! +\frac{C\chi^2}{D_n} (\Vert [{\boldsymbol{\sigma}}^m\! -\! {\boldsymbol{\sigma}}^{m-1}\!, n^m \!- \!n^{m-1}]\Vert_{L^2}^2+ \Vert \xi^{m-1}_{\boldsymbol{\sigma}}\Vert_{L^2}^2+h^{2(r_3+1)} \|{\boldsymbol{\sigma}}^{m-1}\|_{ H^{r_3+1}}^2 )  \Vert[n^{m-1}+\alpha_0,{\boldsymbol{\sigma}}^m]\Vert_{L^\infty}^2\nonumber\\
&&\!\!\!\!\!\!\!\!\!+ \frac{C\chi^4}{D_n^3} \Vert \xi^{m-1}_{n}\Vert_{L^2}^{2} +\frac{C\chi^2}{D_n} \Vert n^{m-1}\Vert_{H^2}^2 \Vert \xi_{\boldsymbol{\sigma}}^{m-1}\Vert_{L^2}^2    + \displaystyle\frac{C\chi^2 }{D_n} h^{2(r_1 +1)} \Vert {\boldsymbol{\sigma}}^{m-1}\Vert_{H^2}^2   \Vert n^{m-1}\Vert_{H^{r_1 + 1}}^2.
\end{eqnarray}
\vspace{0.5 cm}

{\underline{{\it 2. Error estimate for the chemical concentration} $c$}}\\

Considering the interpolation operator $\mathbb{P}_c$ defined in (\ref{Interp1New})$_2$, we decompose the total error $e_c^m$  as follows:
\begin{equation}\label{u1ac}
e_c^m=(c^m -\mathbb{P}_c c^m) + ( \mathbb{P}_c c^m - c^m_h )=\theta^m_c+\xi^m_c.
\end{equation}
Then, taking into account (\ref{Interp1New})$_2$, from (\ref{errc}), (\ref{u1a}), (\ref{u1au}) and (\ref{u1ac}), we have
\begin{eqnarray}\label{errc-int}
&(\delta_t &\!\!\!\!\xi_c^m,\bar{c}) + D_c(\nabla \xi_c^m, \nabla \bar{c})  +A(\mathbf{u}^{m-1}_h,\xi^m_c,\bar{c}) =(\omega_c^m,\bar{c})- (\delta_t \theta^m_c,\bar{c})-A(\mathbf{u}^m-\mathbf{u}^{m-1},c^m,\bar{c})\nonumber\\
&&\hspace{-0.3 cm}-A(\mathbf{u}^{m-1}_h,\theta^m_c,\bar{c})-A((\xi^{m-1}_{\mathbf{u}}+\theta^{m-1}_{\mathbf{u}}),c^m,\bar{c})   - \gamma((n^{m}\!-n^{m-1})c^m + (n^{m-1}\!+\alpha_0)(c^m \!- c^{m-1}), \bar{c})\nonumber\\
&&\hspace{-0.3 cm}- \gamma( (\xi^{m-1}_n+\theta^{m-1}_n) c^{m-1} + (n^{m-1}_h\!+\alpha_0)(\xi^{m-1}_c+\theta^{m-1}_c), \bar{c})+ D_c(\theta^m_c,\bar{c}).
\end{eqnarray}
Taking $\bar{c}=\xi_c^m$ in (\ref{errc-int}) and using (\ref{a8}), we get
\begin{eqnarray}\label{errc-int2}
&\displaystyle\frac{1}{2}&\!\!\!\!\!\delta_t  \Vert \xi_c^m\Vert_{L^2}^2 + \frac{\Delta t}{2} \Vert \delta_t \xi_c^m\Vert_{L^2}^2 + D_c\Vert \nabla \xi_c^m\Vert_{L^2}^2   =(\omega_c^m,\xi_c^m)- (\delta_t \theta^m_c,\xi_c^m)-A(\mathbf{u}^m-\mathbf{u}^{m-1},c^m,\xi_c^m)\nonumber\\
&&\hspace{-0.4 cm}-A(\mathbf{u}^{m-1}_h\!,\theta^m_c,\xi_c^m)-A((\xi^{m-1}_{\mathbf{u}}\!\!+\theta^{m-1}_{\mathbf{u}}),c^m,\xi_c^m)   - \gamma((n^{m}\!-n^{m-1})c^m + (n^{m-1}\!+\alpha_0)(c^m \!- c^{m-1}), \xi_c^m)\nonumber\\
&&\hspace{-0.4 cm}- \gamma( (\xi^{m-1}_n\!+\theta^{m-1}_n) c^{m-1}\! + (n^{m-1}_h\!+\alpha_0)(\xi^{m-1}_c\!+\theta^{m-1}_c), \xi_c^m)+ D_c(\theta^m_c,\xi_c^m)=\sum_{k=1}^{10} J_k.
\end{eqnarray}
Then, using the H\"older and Young inequalities, (\ref{StabStk1})-(\ref{StabStk2}), (\ref{aprox01}) and (\ref{aprox01-a}), we control the terms on the right hand side of (\ref{errc-int2}) as follows:
\begin{eqnarray}\label{Ea1aC}
&J_1&\!\!\!\leq (\Vert \xi_c^m\Vert_{L^2}+ \Vert \nabla \xi_c^m\Vert_{L^2})\Vert \omega_c^m\Vert_{(H^1)'}\leq  (\Delta t \Vert \delta_t \xi_c^m\Vert_{L^2}+ \Vert  \xi_c^{m-1}\Vert_{L^2}+ \Vert \nabla \xi_c^m\Vert_{L^2})\Vert \omega_c^m\Vert_{(H^1)'}  \nonumber\\
&&\!\!\!\leq \displaystyle\frac{D_c}{8} \Vert \nabla \xi_c^m\Vert_{L^2}^2 + \frac{\Delta t}{24}\Vert \delta_t \xi_c^m\Vert_{L^2}^2+ \frac{1}{2}\Vert  \xi_c^{m-1}\Vert_{L^2}^2 + \Big(\frac{C}{D_c} + C\Delta t+\frac{1}{2}\Big) \Vert \omega_c^m\Vert_{(H^1)'}^2 \nonumber\\
&&\!\!\!\leq \displaystyle\frac{D_c}{8} \Vert \nabla \xi_c^m\Vert_{L^2}^2 + \frac{\Delta t}{24}\Vert \delta_t \xi_c^m\Vert_{L^2}^2+ \frac{1}{2}\Vert  \xi_c^{m-1}\Vert_{L^2}^2 + \Big(\frac{1}{D_c} + \Delta t+1\Big)C\Delta t \!\int_{t_{m-1}}^{t_m}\!\!\!\Vert c_{tt}(t)\Vert_{(H^1)'}^2 dt,
\end{eqnarray}
\begin{eqnarray}\label{Ea1bC}
&J_2+J_{10}&\!\!\!\!\leq  \Vert \xi_c^m\Vert_{L^2} \Vert(\mathcal{I} - \mathbb{P}_c) \delta_t c^m \Vert_{L^2}+ D_c \Vert \xi_c^m\Vert_{L^2} \Vert \theta_c^m\Vert_{L^2}\nonumber\\
&&\!\!\!\! \leq  (\Vert(\mathcal{I} - \mathbb{P}_c) \delta_t c^m \Vert_{L^2}+ D_c \Vert \theta_c^m\Vert_{L^2})(\Delta t \Vert \delta_t \xi_c^m\Vert_{L^2}+ \Vert  \xi_c^{m-1}\Vert_{L^2}) 
\nonumber\\
&&\!\!\!\! \leq  \frac{\Delta t}{24}\Vert \delta_t \xi_c^m\Vert_{L^2}^2+ \frac{1}{2}\Vert  \xi_c^{m-1}\Vert_{L^2}^2+(\Delta t +1)C h^{2(r_2+1)}\Big[\Vert \delta_t c^m\Vert_{H^{r_2+1}}^2+D_c^2 \Vert c^m \Vert_{H^{r_2 +1}}^2\Big] \nonumber\\
&&\!\!\!\!\leq \frac{\Delta t}{24}\Vert \delta_t \xi_c^m\Vert_{L^2}^2+ \frac{1}{2}\Vert  \xi_c^{m-1}\Vert_{L^2}^2\nonumber\\
&&\!\!\!\! \ \ +  (\Delta t +1)C h^{2(r_2+1)}\Big[\displaystyle\frac{1}{\Delta t }\!\int_{t_{m-1}}^{t_m}\!\!\!\Vert c_t \Vert_{H^{r_2+1}}^2  dt+D_c^2 \Vert c^m \Vert_{H^{r_2 +1}}^2\Big],
\end{eqnarray}
\begin{eqnarray}\label{Ea1cC}
&J_4&\!\!\! = A(\xi_{\mathbf{u}}^{m-1},\theta^m_c,\xi^m_c) -A(\mathbb{P}_{\mathbf{u}} \mathbf{u}^{m-1},\theta^m_c,\xi^m_c)  \nonumber\\
&&\!\!\! \leq  \Vert \xi_{\mathbf{u}}^{m-1}\Vert_{L^2}   \Vert \theta_c^m\Vert_{L^\infty\cap W^{1,3}} \Vert \xi_c^m\Vert_{H^1} + \Vert \mathbb{P}_{\mathbf{u}} \mathbf{u}^{m-1}\Vert_{L^\infty\cap W^{1,3}}   \Vert \theta_c^m\Vert_{L^2} \Vert \xi_c^m\Vert_{H^1} \nonumber\\
&&\!\!\!\leq (\Vert \xi_{\mathbf{u}}^{m-1}\Vert_{L^2}   \Vert \theta_c^m\Vert_{L^\infty\cap W^{1,3}}  + \Vert \mathbb{P}_{\mathbf{u}} \mathbf{u}^{m-1}\Vert_{L^\infty\cap W^{1,3}}   \Vert \theta_c^m\Vert_{L^2}) (\Delta t \Vert \delta_t \xi_c^m\Vert_{L^2}+ \Vert  \xi_c^{m-1}\Vert_{L^2}+ \Vert \nabla \xi_c^m\Vert_{L^2})  \nonumber\\
&&\!\!\! \leq \displaystyle\frac{D_c}{8} \Vert \nabla \xi_c^m\Vert_{L^2}^2 + \frac{\Delta t}{24}\Vert \delta_t \xi_c^m\Vert_{L^2}^2+ \frac{1}{2}\Vert  \xi_c^{m-1}\Vert_{L^2}^2 + \Big(\frac{1}{D_c} + \Delta t+1\Big)C \Vert c^m\Vert_{H^2}^2 \Vert \xi_{\mathbf{u}}^{m-1}\Vert_{L^2}^2    \nonumber\\
&&+\Big(\frac{1}{D_c} + \Delta t+1\Big)C h^{2(r_2 +1)} \Vert [\mathbf{u}^{m-1},\pi^{m-1}]\Vert_{H^2\times H^1}^2   \Vert c^m\Vert_{H^{r_2 + 1}}^2,
\end{eqnarray}
\begin{eqnarray}\label{Ea1dC}
&J_3 + J_5&\!\!\!\! \leq (\Vert \mathbf{u}^m\!\!-\!\mathbf{u}^{m-1}\Vert_{L^2}+\Vert \xi_{\mathbf{u}}^{m-1}\Vert_{L^2}+\Vert \theta_{\mathbf{u}}^{m-1}\Vert_{L^2}  )  \Vert c^m\Vert_{L^\infty\cap W^{1,3}} \Vert \xi_c^m\Vert_{H^1}  \nonumber\\
&&\!\!\!\! \leq (\Vert \mathbf{u}^m\!\!-\!\mathbf{u}^{m-1}\Vert_{L^2}+\Vert \xi_{\mathbf{u}}^{m-1}\Vert_{L^2}+\Vert \theta_{\mathbf{u}}^{m-1}\Vert_{L^2}  )  \Vert c^m\Vert_{L^\infty\cap W^{1,3}} (\Delta t \Vert \delta_t \xi_c^m\Vert_{L^2}+ \Vert  \xi_c^{m-1}\Vert_{L^2}+ \Vert \nabla \xi_c^m\Vert_{L^2}) \nonumber\\
&&\!\!\!\! \leq \displaystyle\frac{D_c}{8} \Vert \nabla \xi_c^m\Vert_{L^2}^2 + \frac{\Delta t}{24}\Vert \delta_t \xi_c^m\Vert_{L^2}^2+ \frac{1}{2}\Vert  \xi_c^{m-1}\Vert_{L^2}^2 + \Big(\frac{1}{D_c} + \Delta t+1\Big)C \Vert \mathbf{u}^m-\mathbf{u}^{m-1}\Vert_{L^2}^2  \Vert c^m\Vert_{H^2}^2\nonumber\\
&&+\Big(\frac{1}{D_c} + \Delta t+1\Big)C (h^{2(r+1)} \|[\mathbf{u}^{m-1},\pi^{m-1}]\|_{H^{r+1}\times H^{r}}^2+\Vert \xi_{\mathbf{u}}^{m-1}\Vert_{L^2}^2) \Vert c^m\Vert_{H^2}^2,
\end{eqnarray}
\begin{eqnarray}\label{Ea1eC}
&\displaystyle\sum_{k=6}^8 J_k&\!\!\!\! \leq \gamma \Vert [n^m \!- \!n^{m-1}\!,c^m\! -\! c^{m-1}\!,  \xi^{m-1}_{n},\theta^{m-1}_{n}]\Vert_{L^2}  \Vert[c^m\!, n^{m-1}\!+\!\alpha_0,c^{m-1}]\Vert_{L^\infty} (\Delta t \Vert \delta_t \xi_c^m\Vert_{L^2}+ \Vert  \xi_c^{m-1}\Vert_{L^2})   \nonumber\\
&&\!\!\!\! \leq \frac{\Delta t}{24}\Vert \delta_t \xi_c^m\Vert_{L^2}^2+ \frac{1}{2}\Vert  \xi_c^{m-1}\Vert_{L^2}^2 +(\Delta t + 1)\gamma^2 C \Vert [n^m \!- \!n^{m-1}\!,c^m\! -\! c^{m-1}]\Vert_{L^2}^2 \Vert[c^m\!, n^{m-1}\!+\!\alpha_0,c^{m-1}]\Vert_{H^2}^2\nonumber\\
&&+(\Delta t + 1)\gamma^2C (h^{2(r_1+1)} \|n^{m-1}\|_{H^{r_1+1}}^2+ \Vert \xi^{m-1}_{n}\Vert_{L^2}^2)\Vert[c^m\!, n^{m-1}\!+\!\alpha_0,c^{m-1}]\Vert_{H^2}^2,
\end{eqnarray}
\begin{eqnarray}\label{Ea1fC}
&J_9&\!\!\! \leq \gamma\Vert [\xi^{m-1}_{c}, \theta^{m-1}_{c}]\Vert_{L^2}  \Vert n^{m-1}_h+\alpha_0\Vert_{L^3} (\Delta t \Vert \delta_t \xi_c^m\Vert_{L^2}+ \Vert  \xi_c^{m-1}\Vert_{L^2}+ \Vert \nabla \xi_c^m\Vert_{L^2})  \nonumber\\
&&\!\!\! \leq  \displaystyle\frac{D_c}{8} \Vert \nabla \xi_c^m\Vert_{L^2}^2 + \frac{\Delta t}{24}\Vert \delta_t \xi_c^m\Vert_{L^2}^2+ \frac{1}{2}\Vert  \xi_c^{m-1}\Vert_{L^2}^2\nonumber\\
&& + \Big(\frac{1}{D_c} + \Delta t+1\Big)\gamma^2C (h^{2(r_2+1)} \|c^{m-1}\|_{H^{r_2+1}}^2+\Vert \xi^{m-1}_{c}\Vert_{L^2}^2) \Vert n^{m-1}_h+\alpha_0\Vert_{L^3}^2.
\end{eqnarray}
Therefore, denoting $\gamma_1:=\frac{1}{D_c} + \Delta t+1$ and $\gamma_2:=\Delta t+1$, from (\ref{errc-int2})-(\ref{Ea1fC}), we arrive at
\begin{eqnarray}\label{errcfin}
&\displaystyle\frac{1}{2}&\!\!\!\!\!\delta_t  \Vert \xi_c^m\Vert_{L^2}^2 + \displaystyle\frac{\Delta t}{4} \Vert \delta_t \xi_c^m\Vert_{L^2}^2 + \frac{D_c}{2}\Vert \nabla \xi_c^m\Vert_{L^2}^2  \leq  \gamma_1C\Big(\Delta t \!\int_{t_{m-1}}^{t_m}\!\!\!\Vert c_{tt}(t)\Vert_{(H^1)'}^2 dt + \Vert c^m\Vert_{H^2}^2 \Vert \xi_{\mathbf{u}}^{m-1}\Vert_{L^2}^2\Big) \nonumber\\
&&\!\!\!\!\!\!\!\!\!+ \gamma_2C h^{2(r_2+1)}\Big[\displaystyle\frac{1}{\Delta t }\!\int_{t_{m-1}}^{t_m}\!\!\!\Vert c_t \Vert_{H^{r_2+1}}^2  dt+D_c^2 \Vert c^m \Vert_{H^{r_2 +1}}^2\Big] +\gamma_1 C h^{2(r_2 +1)} \Vert [\mathbf{u}^{m-1}\!,\pi^{m-1}]\Vert_{H^2\times H^1}^2   \Vert c^m\Vert_{H^{r_2 + 1}}^2\nonumber\\
&&\!\!\!\!\!\!\!\!\!+\gamma_1C \Vert \mathbf{u}^m-\mathbf{u}^{m-1}\Vert_{L^2}^2  \Vert c^m\Vert_{H^2}^2+\gamma_1C (h^{2(r+1)} \|[\mathbf{u}^{m-1},\pi^{m-1}]\|_{H^{r+1}\times H^{r}}^2) \Vert c^m\Vert_{H^2}^2\nonumber\\
&&\!\!\!\!\!\!\!\!\! +\gamma_2\gamma^2 C (\Vert [n^m \!- \!n^{m-1}\!,c^m\! -\! c^{m-1}]\Vert_{L^2}^2 + h^{2(r_1+1)} \|n^{m-1}\|_{H^{r_1+1}}^2+ \Vert \xi^{m-1}_{n}\Vert_{L^2}^2)\Vert[c^m\!, n^{m-1}\!+\!\alpha_0,c^{m-1}]\Vert_{H^2}^2\nonumber\\
&&\!\!\!\!\!\!\!\!\!+\gamma_1\gamma^2C (h^{2(r_2+1)} \|c^{m-1}\|_{H^{r_2+1}}^2+\Vert \xi^{m-1}_{c}\Vert_{L^2}^2) \Vert n^{m-1}_h+\alpha_0\Vert_{L^3}^2 + 3\Vert \xi^{m-1}_{c}\Vert_{L^2}^2.
\end{eqnarray}
\vspace{0.5 cm}

{\underline{{\it 3. Error estimate for the velocity} $\mathbf{u}$}}\\

Taking into account the interpolation operator $\mathbb{P}_{\pi}$ defined in (\ref{StokesOp}), we decompose the total error $e_\pi^m$  as follows:
\begin{equation}\label{u1api}
e_\pi^m=(\pi^m -\mathbb{P}_\pi \pi^m) + ( \mathbb{P}_\pi \pi^m - \pi^m_h )=\theta^m_\pi+\xi^m_\pi.
\end{equation}
Then, taking into account (\ref{StokesOp}), from (\ref{erru})-(\ref{errpi}), (\ref{u1a}), (\ref{u1au}) and (\ref{u1api}), we have
\begin{eqnarray}\label{erru-int}
&(\delta_t  \xi_{\mathbf{u}}^m,&\!\!\!\!\!\bar{\mathbf{u}})+ \frac{D_{\mathbf{u}}}{\rho}(\nabla \xi_{\mathbf{u}}^m, \nabla \bar{\mathbf{u}})= (\omega_{\mathbf{u}}^m,\bar{\mathbf{u}})-(\delta_t \theta_{\mathbf{u}}^m,\bar{\mathbf{u}}) - B(\mathbf{u}^m-\mathbf{u}^{m-1},{\mathbf{u}}^m,\bar{\mathbf{u}})   -B(\xi^{m-1}_{\mathbf{u}}+\theta^{m-1}_{\mathbf{u}},{\mathbf{u}}^m,\bar{\mathbf{u}})\nonumber\\
&&\hspace{-0.8 cm} \!-B(\mathbf{u}^{m-1}_h,\xi^m_{\mathbf{u}}+\theta^m_{\mathbf{u}},\bar{\mathbf{u}}) +\frac{1}{\rho}(\xi^m_\pi,\nabla \cdot \bar{\mathbf{u}}) + \frac{1}{\rho}((n^m-n^{m-1})\nabla \phi,\bar{\mathbf{u}})+ \frac{1}{\rho}((\xi^{m-1}_n+\theta^{m-1}_n) \nabla \phi,\bar{\mathbf{u}}),
\end{eqnarray}
\begin{equation}\label{errpi-int}
(\bar{\pi},\nabla \cdot \xi^m_{\mathbf{u}})=0.
\end{equation}
Taking $\bar{\mathbf{u}}=\xi_{\mathbf{u}}^m$ in (\ref{erru-int}), $\bar{\pi}=\frac{1}{\rho}\xi^m_\pi$ in (\ref{errpi-int}), using (\ref{a7}) and adding the resulting expressions, we obtain
\begin{eqnarray}\label{erru-int2}
&\displaystyle\frac{1}{2}&\!\!\!\!\!\delta_t  \Vert\xi_{\mathbf{u}}^m\Vert_{L^2}^2+\displaystyle\frac{\Delta t}{2}\Vert\delta_t \xi_{\mathbf{u}}^m\Vert_{L^2}^2 + \frac{D_{\mathbf{u}}}{\rho}\Vert\nabla \xi_{\mathbf{u}}^m\Vert_{L^2}^2= (\omega_{\mathbf{u}}^m,\xi_{\mathbf{u}}^m)-(\delta_t \theta_{\mathbf{u}}^m,\xi_{\mathbf{u}}^m) - B(\mathbf{u}^m-\mathbf{u}^{m-1},{\mathbf{u}}^m,\xi_{\mathbf{u}}^m)  \nonumber\\
&&\!\!\!\!\! -B(\xi^{m-1}_{\mathbf{u}}\!+\theta^{m-1}_{\mathbf{u}}\!,{\mathbf{u}}^m,\xi_{\mathbf{u}}^m) -B(\mathbf{u}^{m-1}_h\!,\theta^m_{\mathbf{u}},\xi_{\mathbf{u}}^m) + \frac{1}{\rho}((n^m-n^{m-1}+\xi^{m-1}_n+\theta^{m-1}_n) \nabla \phi,\xi_{\mathbf{u}}^m)\nonumber\\
&&\!\!\!\!\! =\sum_{k=1}^{6} L_k.
\end{eqnarray}
Then, using the H\"older and Young inequalities, the Poincar\'e inequality (\ref{PIa}), (\ref{StabStk1})-(\ref{StabStk2}) and  (\ref{aprox01}), we control the terms on the right hand side of (\ref{erru-int2}) as follows
\begin{equation}\label{Ea1au}
L_1\leq \displaystyle\frac{D_{\mathbf{u}}}{10\rho} \Vert \nabla \xi_{\mathbf{u}}^m\Vert_{L^2}^2 + \frac{C\rho}{D_{\mathbf{u}}}\Vert \omega_{\mathbf{u}}^m\Vert_{(H^1)'}^2\leq \frac{D_{\mathbf{u}}}{10\rho} \Vert \nabla \xi_{\mathbf{u}}^m\Vert_{L^2}^2 + \frac{C\rho}{D_{\mathbf{u}}}\Delta t \int_{t_{m-1}}^{t_m}\Vert {\mathbf{u}}_{tt}(t)\Vert_{(H^1)'}^2 dt,
\end{equation}
\begin{eqnarray}\label{Ea1bu}
&L_2&\!\!\!\leq  \Vert \xi_{\mathbf{u}}^m\Vert_{L^2} \Vert(\mathcal{I} - \mathbb{P}_{\mathbf{u}}) \delta_t {\mathbf{u}}^m \Vert_{L^2} \leq \frac{D_{\mathbf{u}}}{10\rho} \Vert \nabla \xi_{\mathbf{u}}^m\Vert_{L^2}^2 + \frac{C\rho}{D_{\mathbf{u}}} h^{2(r+1)}\Vert [\delta_t {\mathbf{u}}^m,\delta_t \pi^m]\Vert_{H^{r+1}\times H^r}^2\nonumber\\
&&\!\!\!\leq \frac{D_{\mathbf{u}}}{10\rho} \Vert \nabla \xi_{\mathbf{u}}^m\Vert_{L^2}^2 + \frac{C\rho}{D_{\mathbf{u}}\Delta t} h^{2(r+1)}\int_{t_{m-1}}^{t_m}\Vert [\mathbf{u}_t,\pi_t] \Vert_{H^{r+1}\times H^r}^2  dt,
\end{eqnarray}
\begin{eqnarray}\label{Ea1cu}
&L_3 + L_4&\!\!\! \leq (\Vert \mathbf{u}^m-\mathbf{u}^{m-1}\Vert_{L^2}+\Vert \xi_{\mathbf{u}}^{m-1}\Vert_{L^2}+\Vert \theta_{\mathbf{u}}^{m-1}\Vert_{L^2}  )  \Vert {\mathbf{u}}^m\Vert_{L^\infty\cap W^{1,3}} \Vert \xi_{\mathbf{u}}^m\Vert_{H^1}  \nonumber\\
&&\hspace{-1.4 cm} \leq\! \frac{D_{\mathbf{u}}}{10\rho} \Vert \nabla \xi_{\mathbf{u}}^m\Vert_{L^2}^2 +\frac{C\rho}{D_{\mathbf{u}}} (\Vert \mathbf{u}^m\!\!-\!\mathbf{u}^{m-1}\Vert_{L^2}^2+h^{2(r+1)} \|[\mathbf{u}^{m-1}\!,\pi^{m-1}]\|_{H^{r+1}\times H^{r}}^2+\Vert \xi_{\mathbf{u}}^{m-1}\Vert_{L^2}^2) \Vert \mathbf{u}^m\Vert_{H^2}^2,
\end{eqnarray}
\begin{eqnarray}\label{Ea1du}
&L_5&\!\!\! = B(\xi_{\mathbf{u}}^{m-1},\theta^m_{\mathbf{u}},\xi^m_{\mathbf{u}}) -B(\mathbb{P}_{\mathbf{u}} \mathbf{u}^{m-1},\theta^m_{\mathbf{u}},\xi^m_{\mathbf{u}})  \nonumber\\
&&\!\!\! \leq  \Vert \xi_{\mathbf{u}}^{m-1}\Vert_{L^2}   \Vert \theta_{\mathbf{u}}^m\Vert_{L^\infty\cap W^{1,3}} \Vert \xi_{\mathbf{u}}^m\Vert_{H^1} + \Vert \mathbb{P}_{\mathbf{u}} \mathbf{u}^{m-1}\Vert_{L^\infty\cap W^{1,3}}   \Vert \theta_{\mathbf{u}}^m\Vert_{L^2} \Vert \xi_{\mathbf{u}}^m\Vert_{H^1} \nonumber\\
&&\!\!\! \leq \ \frac{D_{\mathbf{u}}}{10\rho} \Vert \nabla \xi_{\mathbf{u}}^m\Vert_{L^2}^2 + \frac{C\rho}{D_{\mathbf{u}}}  \Vert [\mathbf{u}^m,\pi^m]\Vert_{H^2\times H^1}^2 \Vert \xi_{\mathbf{u}}^{m-1}\Vert_{L^2}^2 \nonumber\\
&&  + \displaystyle\frac{C\rho}{D_{\mathbf{u}}}h^{2(r +1)} \Vert [\mathbf{u}^{m-1},\pi^{m-1}]\Vert_{H^2\times H^1}^2   \Vert [\mathbf{u}^m,\pi^m]\Vert_{H^{r + 1}\times H^r}^2,
\end{eqnarray}
\begin{eqnarray}\label{Ea1eu}
&L_6&\!\!\! \leq \displaystyle\frac{1}{\rho}\Vert [n^m \!- \!n^{m-1}\!, \xi^{m-1}_{n},\theta^{m-1}_{n}]\Vert_{L^2}  \Vert\nabla \phi\Vert_{L^3} \Vert \xi_{\mathbf{u}}^m\Vert_{H^1}  \nonumber\\
&&\!\!\! \leq \frac{D_{\mathbf{u}}}{10\rho} \Vert \nabla \xi_{\mathbf{u}}^m\Vert_{L^2}^2 + \frac{C}{\rho D_{\mathbf{u}}}  ( \Vert n^m \!- \!n^{m-1}\Vert_{L^2}^2+h^{2(r_1+1)}\! \|n^{m-1}\|_{H^{r_1+1}}^2+ \Vert \xi^{m-1}_{n}\Vert_{L^2}^2)  \Vert\nabla \phi\Vert_{L^3}^2.
\end{eqnarray}
Therefore, from (\ref{erru-int2})-(\ref{Ea1eu}), we arrive at
\begin{eqnarray}\label{errufin}
&\displaystyle\frac{1}{2}&\!\!\!\!\!\delta_t  \Vert\xi_{\mathbf{u}}^m\Vert_{L^2}^2+\displaystyle\frac{\Delta t}{2}\Vert\delta_t \xi_{\mathbf{u}}^m\Vert_{L^2}^2 + \frac{D_{\mathbf{u}}}{2\rho}\Vert\nabla \xi_{\mathbf{u}}^m\Vert_{L^2}^2  \leq  \frac{C\rho}{D_{\mathbf{u}}} \int_{t_{m-1}}^{t_m}\!\!\!\Big(\! \Delta t\Vert {\mathbf{u}}_{tt}(t)\Vert_{(H^1)'}^2 + \frac{h^{2(r+1)}}{\Delta t}\Vert [\mathbf{u}_t,\pi_t] \Vert_{H^{r+1}\times H^r}^2\!\Big)  dt \nonumber\\
&&\!\!\!\!\!\!\!\!\!+\frac{C\rho}{D_{\mathbf{u}}} (\Vert \mathbf{u}^m\!-\!\mathbf{u}^{m-1}\Vert_{L^2}^2+h^{2(r+1)} \|[\mathbf{u}^{m-1}\!,\pi^{m-1}]\|_{H^{r+1}\times H^{r}}^2+\Vert \xi_{\mathbf{u}}^{m-1}\Vert_{L^2}^2) \Vert \mathbf{u}^m\Vert_{H^2}^2\nonumber\\
&&\!\!\!\!\!\!\!\!\!+ \displaystyle\frac{C\rho}{D_{\mathbf{u}}}(\Vert [\mathbf{u}^m,\pi^m]\Vert_{H^2\times H^1}^2 \Vert \xi_{\mathbf{u}}^{m-1}\Vert_{L^2}^2+ h^{2(r +1)} \Vert [\mathbf{u}^{m-1},\pi^{m-1}]\Vert_{H^2\times H^1}^2   \Vert [\mathbf{u}^m,\pi^m]\Vert_{H^{r + 1}\times H^r}^2)\nonumber\\
&&\!\!\!\!\!\!\!\!\! + \frac{C}{\rho D_{\mathbf{u}}}  ( \Vert n^m \!- \!n^{m-1}\Vert_{L^2}^2+h^{2(r_1+1)}\! \|n^{m-1}\|_{H^{r_1+1}}^2+ \Vert \xi^{m-1}_{n}\Vert_{L^2}^2)  \Vert\nabla \phi\Vert_{L^3}^2.
\end{eqnarray}
\vspace{0.5 cm}

{\underline{{\it 4. Error estimate for the flux} $\boldsymbol{\sigma}$}}\\

Then, taking into account (\ref{Interp1New})$_3$, from (\ref{errs}), (\ref{u1a})-(\ref{u1au}) and (\ref{u1ac}), we have
\begin{eqnarray}\label{errs-int}
&(\delta_t \xi_{\boldsymbol\sigma}^m&\!\!\!\!\!,\bar{\boldsymbol \sigma})+ D_c(\nabla \cdot \xi_{\boldsymbol\sigma}^m, \nabla \cdot\bar{\boldsymbol \sigma}) + D_c(\mbox{rot }\xi_{\boldsymbol\sigma}^m, \mbox{rot }\bar{\boldsymbol \sigma})  = (\omega_{\boldsymbol\sigma}^m,\bar{\boldsymbol \sigma})-(\delta_t \theta_{\boldsymbol\sigma}^m,\bar{\boldsymbol \sigma}) \nonumber\\
&&\!\!\!\!\!\! + ((\mathbf{u}^m-\mathbf{u}^{m-1})\cdot {\boldsymbol\sigma}^m+\mathbf{u}^{m-1}({\boldsymbol{\sigma}}^m-{\boldsymbol{\sigma}}^{m-1})+ (\xi^{m-1}_{\mathbf{u}}+\theta^{m-1}_{\mathbf{u}}){\boldsymbol{\sigma}}^{m-1},\nabla \cdot\bar{\boldsymbol \sigma})\nonumber\\
&&\!\!\!\!\!\! + (\mathbf{u}^{m-1}_h(\xi^{m-1}_{\boldsymbol{\sigma}}+\theta^{m-1}_{\boldsymbol{\sigma}})+ \gamma(n^{m}\!-n^{m-1})c^m+\gamma(n^{m-1}\!+\alpha_0)(c^m \!- c^{m-1}) ,\nabla \cdot\bar{\boldsymbol \sigma})
\nonumber\\
&&\!\!\!\!\!\! + \gamma((\xi^{m-1}_n+\theta^{m-1}_n) c_h^{m-1} + (n^{m-1}\!+\alpha_0)(\xi^{m-1}_c+\theta^{m-1}_c),\nabla \cdot\bar{\boldsymbol \sigma}) + D_c(\theta_{\boldsymbol\sigma}^m,\bar{\boldsymbol \sigma}).
\end{eqnarray}
Taking $\bar{\boldsymbol{\sigma}}=\xi_{\boldsymbol\sigma}^m$ in (\ref{errs-int}), we arrive at
\begin{eqnarray}\label{errs-int2}
&\displaystyle\frac{1}{2}&\!\!\!\!\!\delta_t \Vert\xi_{\boldsymbol\sigma}^m\Vert_{L^2}^2+\displaystyle\frac{\Delta t}{2}\Vert\delta_t \xi_{\boldsymbol\sigma}^m\Vert_{L^2}^2+ D_c\Vert \nabla \cdot \xi_{\boldsymbol\sigma}^m\Vert_{L^2}^2+ D_c\Vert\mbox{rot }\xi_{\boldsymbol\sigma}^m\Vert_{L^2}^2 = (\omega_{\boldsymbol\sigma}^m,\xi_{\boldsymbol\sigma}^m)-(\delta_t \theta_{\boldsymbol\sigma}^m,\xi_{\boldsymbol\sigma}^m) \nonumber\\
&&\!\!\!\!\!\! + ((\mathbf{u}^m-\mathbf{u}^{m-1})\cdot {\boldsymbol\sigma}^m+\mathbf{u}^{m-1}({\boldsymbol{\sigma}}^m-{\boldsymbol{\sigma}}^{m-1})+ (\xi^{m-1}_{\mathbf{u}}+\theta^{m-1}_{\mathbf{u}}){\boldsymbol{\sigma}}^{m-1},\nabla \cdot\xi_{\boldsymbol\sigma}^m)\nonumber\\
&&\!\!\!\!\!\! + (\mathbf{u}^{m-1}_h(\xi^{m-1}_{\boldsymbol{\sigma}}+\theta^{m-1}_{\boldsymbol{\sigma}})+ \gamma(n^{m}\!-n^{m-1})c^m+\gamma(n^{m-1}\!+\alpha_0)(c^m \!- c^{m-1}) ,\nabla \cdot\xi_{\boldsymbol\sigma}^m)
\nonumber\\
&&\!\!\!\!\!\! + \gamma((\xi^{m-1}_n+\theta^{m-1}_n) c_h^{m-1} + (n^{m-1}\!+\alpha_0)(\xi^{m-1}_c+\theta^{m-1}_c),\nabla \cdot\xi_{\boldsymbol\sigma}^m) + D_c(\theta_{\boldsymbol\sigma}^m,\xi_{\boldsymbol\sigma}^m)\nonumber\\
&&\!\!\!\!\!\! =\sum_{k=1}^{11} R_k.
\end{eqnarray}
Then, using the H\"older and Young inequalities, the equivalent norm in $\mathbf{H}^1_{\sigma}(\Omega)$ given in (\ref{EQs}), as well as  (\ref{StabStk1})-(\ref{StabStk2}) and (\ref{aprox01})-(\ref{aprox01-a}), we control the terms on the right hand side of (\ref{errs-int2}) as follows
\begin{eqnarray}\label{Ea1aS}
&R_1&\!\!\!\leq (\Vert \xi_{\boldsymbol \sigma}^m\Vert_{L^2}+ \Vert \nabla\cdot \xi_{\boldsymbol \sigma}^m\Vert_{L^2}+\Vert \mbox{rot } \xi_{\boldsymbol \sigma}^m\Vert_{L^2})\Vert \omega_{\boldsymbol \sigma}^m\Vert_{(H^1)'}\nonumber\\
&&\!\!\!\leq  (\Delta t \Vert \delta_t \xi_{\boldsymbol \sigma}^m\Vert_{L^2}+ \Vert  \xi_{\boldsymbol \sigma}^{m-1}\Vert_{L^2}+ \Vert \nabla\cdot \xi_{\boldsymbol \sigma}^m\Vert_{L^2}+\Vert \mbox{rot } \xi_{\boldsymbol \sigma}^m\Vert_{L^2})\Vert \omega_{\boldsymbol \sigma}^m\Vert_{(H^1)'}  \nonumber\\
&&\!\!\!\leq \displaystyle\frac{D_c}{6} \Vert \nabla\cdot \xi_{\boldsymbol \sigma}^m\Vert_{L^2}^2+\displaystyle\frac{D_c}{2}\Vert \mbox{rot } \xi_{\boldsymbol \sigma}^m\Vert_{L^2}^2+ \frac{\Delta t}{8}\Vert \delta_t \xi_{\boldsymbol \sigma}^m\Vert_{L^2}^2+ \frac{1}{2}\Vert  \xi_{\boldsymbol \sigma}^{m-1}\Vert_{L^2}^2 + \Big(\frac{C}{D_c} + C\Delta t+\frac{1}{2}\Big) \Vert \omega_{\boldsymbol \sigma}^m\Vert_{(H^1)'}^2 \nonumber\\
&&\!\!\!\leq \displaystyle\frac{D_c}{6} \Vert \nabla\cdot \xi_{\boldsymbol \sigma}^m\Vert_{L^2}^2+\displaystyle\frac{D_c}{2}\Vert \mbox{rot } \xi_{\boldsymbol \sigma}^m\Vert_{L^2}^2+ \frac{\Delta t}{8}\Vert \delta_t \xi_{\boldsymbol \sigma}^m\Vert_{L^2}^2+ \frac{1}{2}\Vert  \xi_{\boldsymbol \sigma}^{m-1}\Vert_{L^2}^2\nonumber\\
&&\ \ +\gamma_1C\Delta t \!\int_{t_{m-1}}^{t_m}\!\!\!\Vert {\boldsymbol \sigma}_{tt}(t)\Vert_{(H^1)'}^2 dt,
\end{eqnarray}
\begin{eqnarray}\label{Ea1bS}
&\hspace{-0.5cm}R_2+R_{11}&\!\!\!\!\leq  \Vert \xi_{\boldsymbol \sigma}^m\Vert_{L^2} \Vert(\mathcal{I} - \mathbb{P}_{\boldsymbol \sigma}) \delta_t {\boldsymbol \sigma}^m \Vert_{L^2}+ D_c \Vert \xi_{\boldsymbol \sigma}^m\Vert_{L^2} \Vert \theta_{\boldsymbol \sigma}^m\Vert_{L^2}\nonumber\\
&&\!\!\!\! \leq  (\Vert(\mathcal{I} - \mathbb{P}_{\boldsymbol \sigma}) \delta_t {\boldsymbol \sigma}^m \Vert_{L^2}+ D_c \Vert \theta_{\boldsymbol \sigma}^m\Vert_{L^2})(\Delta t \Vert \delta_t \xi_{\boldsymbol \sigma}^m\Vert_{L^2}+ \Vert  \xi_{\boldsymbol \sigma}^{m-1}\Vert_{L^2}) 
\nonumber\\
&&\!\!\!\! \leq \frac{\Delta t}{8}\Vert \delta_t \xi_{\boldsymbol \sigma}^m\Vert_{L^2}^2+ \frac{1}{2}\Vert  \xi_{\boldsymbol \sigma}^{m-1}\Vert_{L^2}^2+\gamma_2C h^{2(r_3+1)}\Big[\Vert \delta_t {\boldsymbol \sigma}^m\Vert_{H^{r_3+1}}^2+D_c^2 \Vert {\boldsymbol \sigma}^m \Vert_{H^{r_3 +1}}^2\Big] \nonumber\\
&&\!\!\!\!\leq \frac{\Delta t}{8}\Vert \delta_t \xi_{\boldsymbol \sigma}^m\Vert_{L^2}^2+ \frac{1}{2}\Vert  \xi_{\boldsymbol \sigma}^{m-1}\Vert_{L^2}^2 + \gamma_2C h^{2(r_3+1)}\Big[\displaystyle\frac{1}{\Delta t }\!\int_{t_{m-1}}^{t_m}\!\!\!\Vert {\boldsymbol \sigma}_t \Vert_{H^{r_3+1}}^2  dt+D_c^2 \Vert {\boldsymbol \sigma}^m \Vert_{H^{r_3 +1}}^2\Big],
\end{eqnarray}

\begin{eqnarray}\label{Ea1eS}
&\displaystyle\sum_{k=3, \ k\neq 6,9}^{10} R_k&\!\!\!\! 
\leq \frac{D_c}{6}\Vert \nabla\cdot \xi_{\boldsymbol \sigma}^m\Vert_{L^2}^2+ \frac{C}{D_c}\Vert [\mathbf{u}^m-\mathbf{u}^{m-1}\!,{\boldsymbol{\sigma}}^m-{\boldsymbol{\sigma}}^{m-1}]\Vert_{L^2}^2  \Vert[{\boldsymbol\sigma}^m\!,\mathbf{u}^{m-1}]\Vert_{L^\infty}^2 \nonumber\\
&&+\frac{C}{D_c} (h^{2(r+1)} \Vert [\mathbf{u}^{m-1},\pi^{m-1}]\Vert_{H^{r+1}\times H^r}^2 + \Vert \xi^{m-1}_{\mathbf{u}}\Vert_{L^2}^2)\Vert{\boldsymbol\sigma}^{m-1}\Vert_{L^\infty}^2\nonumber\\
&&+\frac{\gamma^2 C}{D_c} \Vert [n^m \!- \!n^{m-1}\!,c^m\! -\! c^{m-1}]\Vert_{L^2}^2 \Vert[c^m\!, n^{m-1}\!+\!\alpha_0]\Vert_{L^\infty}^2\nonumber\\
&&+\frac{\gamma^2C}{D_c} (h^{2(r_2+1)} \|c^{m-1}\|_{H^{r_2+1}}^2+ \Vert \xi^{m-1}_{c}\Vert_{L^2}^2)\Vert n^{m-1}\!+\!\alpha_0\Vert_{L^\infty}^2,
\end{eqnarray}
\begin{eqnarray}\label{Ea1e-newS}
& R_6 + R_{9}&\!\!\! =  ((\mathbb{P}_{\mathbf{u}} {\mathbf{u}}^{m-1} - \xi_{\mathbf{u}}^{m-1} )( \xi^{m-1}_{\boldsymbol{\sigma}}+\theta^{m-1}_{\boldsymbol{\sigma}}) +\gamma c^{m-1}_h\xi^{m-1}_{n}+ \gamma(\mathbb{P}_{c}{c}^{m-1} - \xi_{c}^{m-1})\theta^{m-1}_{n},\nabla \cdot\xi_{\boldsymbol\sigma}^m) \nonumber\\
&&\!\!\! \leq \displaystyle\frac{D_c}{6} \Vert \nabla \cdot\xi_{\boldsymbol\sigma}^m \Vert_{L^2}^2+ \frac{C}{D_c} \Vert \mathbb{P}_{\mathbf{u}} {\mathbf{u}}^{m-1}\Vert_{L^\infty}^2 (\Vert \xi^{m-1}_{\boldsymbol{\sigma}}\Vert_{L^2}^2+\Vert \theta^{m-1}_{\boldsymbol{\sigma}}\Vert_{L^2}^2)  +\displaystyle\frac{D_{\mathbf{u}}}{4\rho} \Vert \nabla \xi^{m-1}_{\mathbf{u}}\Vert_{L^2}^{2}  \nonumber\\
&&  +\frac{C\rho}{D_{\mathbf{u}}D_c^2} \Vert \xi^{m-1}_{\mathbf{u}}\Vert_{L^2}^{2}(\Vert \xi^{m-1}_{\boldsymbol{\sigma}}\Vert_{L^4}^{4}+\Vert \theta^{m-1}_{\boldsymbol{\sigma}}\Vert_{L^4}^{4})  + \displaystyle\frac{D_n}{8}\Vert \xi^{m-1}_{n}\Vert_{H^1}^{2} +\frac{C\gamma^4}{D_n D_c^2}\Vert \xi^{m-1}_{n}\Vert_{L^2}^{2} \Vert c^{m-1}_h\Vert_{H^1}^{2} \nonumber\\
&& +\displaystyle\frac{C\gamma^2}{D_c}\Vert \mathbb{P}_{c}{c}^{m-1}\Vert_{L^\infty}^2   \Vert \theta^{m-1}_n\Vert_{L^2}^2+\frac{C\gamma^2}{D_c} \Vert \theta_n^{m-1}\Vert_{L^\infty}^2  \Vert \xi^{m-1}_c\Vert_{L^2}^2\nonumber\\
&&\!\!\! \leq \displaystyle\frac{D_c}{6} \Vert \nabla \cdot\xi_{\boldsymbol\sigma}^m \Vert_{L^2}^2 + \displaystyle\frac{D_{\mathbf{u}}}{4\rho} \Vert \nabla \xi^{m-1}_{\mathbf{u}}\Vert_{L^2}^{2}+ \displaystyle\frac{D_n}{8}\Vert \xi^{m-1}_{n}\Vert_{H^1}^{2} +\frac{C\rho}{D_{\mathbf{u}}D_c^2} \Vert \xi^{m-1}_{\mathbf{u}}\Vert_{L^2}^{2} \nonumber\\
&&+ \displaystyle\frac{C}{D_c} (h^{2(r_3 +1)}\Vert {\boldsymbol{\sigma}}^{m-1}\Vert_{H^{r_3 + 1}}^2+\Vert \xi^{m-1}_{\boldsymbol{\sigma}}\Vert_{L^2}^2) \Vert \mathbf{u}^{m-1}\Vert_{H^2}^2  +\frac{C\gamma^4}{D_n D_c^2}\Vert \xi^{m-1}_{n}\Vert_{L^2}^{2} \Vert c^{m-1}_h\Vert_{H^1}^{2} \nonumber\\
&&+  \displaystyle\frac{C\gamma^2}{D_c}h^{2(r_1 +1)}\Vert {c}^{m-1}\Vert_{H^2}^2  \Vert n^{m-1}\Vert_{H^{r_1 + 1}}^2+\frac{C\gamma^2}{D_c} \Vert n^{m-1}\Vert_{H^2}^2  \Vert \xi^{m-1}_c\Vert_{L^2}^2,
\end{eqnarray}
where, in the first inequality of (\ref{Ea1e-newS}), the 2D interpolation inequality (\ref{in2D}) was used, and in the second inequality, the inductive hypothesis (\ref{IndHyp}) and Lemma \ref{uec} were used. Therefore, from (\ref{errs-int2})-(\ref{Ea1e-newS}), we arrive at
\begin{eqnarray}\label{errsfin}
&\displaystyle\frac{1}{2}&\!\!\!\!\!\delta_t \Vert\xi_{\boldsymbol\sigma}^m\Vert_{L^2}^2+\displaystyle\frac{\Delta t}{4}\Vert\delta_t \xi_{\boldsymbol\sigma}^m\Vert_{L^2}^2+ \frac{D_c}{2}\Vert \nabla \cdot \xi_{\boldsymbol\sigma}^m\Vert_{L^2}^2+ \frac{D_c}{2}\Vert\mbox{rot }\xi_{\boldsymbol\sigma}^m\Vert_{L^2}^2   \leq \gamma_1C\Delta t \!\int_{t_{m-1}}^{t_m}\!\!\!\Vert {\boldsymbol \sigma}_{tt}(t)\Vert_{(H^1)'}^2 dt \nonumber\\
&&\!\!\!\!\!\!\!\!\! +\Vert  \xi_{\boldsymbol \sigma}^{m-1}\Vert_{L^2}^2+ \gamma_2C h^{2(r_3+1)}\Big[\displaystyle\frac{1}{\Delta t }\!\int_{t_{m-1}}^{t_m}\!\!\!\Vert {\boldsymbol \sigma}_t \Vert_{H^{r_3+1}}^2  dt+D_c^2 \Vert {\boldsymbol \sigma}^m \Vert_{H^{r_3 +1}}^2\Big]\nonumber\\
&&\!\!\!\!\!\!\!\!\!+ \frac{C}{D_c}\Big(\Vert [\mathbf{u}^m-\mathbf{u}^{m-1}\!,{\boldsymbol{\sigma}}^m-{\boldsymbol{\sigma}}^{m-1}]\Vert_{L^2}^2  \Vert[{\boldsymbol\sigma}^m\!,\mathbf{u}^{m-1}]\Vert_{L^\infty}^2 +h^{2(r+1)} \Vert [\mathbf{u}^{m-1},\pi^{m-1}]\Vert_{H^{r+1}\times H^r}^2 \Vert{\boldsymbol\sigma}^{m-1}\Vert_{L^\infty}^2\Big)\nonumber\\
&&\!\!\!\!\!\!\!\!\!+\frac{C}{D_c}  \Vert \xi^{m-1}_{\mathbf{u}}\Vert_{L^2}^2\Vert{\boldsymbol\sigma}^{m-1}\Vert_{L^\infty}^2+\frac{\gamma^2 C}{D_c} \Vert [n^m \!- \!n^{m-1}\!,c^m\! -\! c^{m-1}]\Vert_{L^2}^2 \Vert[c^m\!, n^{m-1}\!+\!\alpha_0]\Vert_{L^\infty}^2\nonumber\\
&&\!\!\!\!\!\!\!\!\!+\frac{\gamma^2C}{D_c} (h^{2(r_2+1)} \|c^{m-1}\|_{H^{r_2+1}}^2+ \Vert \xi^{m-1}_{c}\Vert_{L^2}^2)\Vert n^{m-1}\!+\!\alpha_0\Vert_{L^\infty}^2 + \displaystyle\frac{D_{\mathbf{u}}}{4\rho} \Vert \nabla \xi^{m-1}_{\mathbf{u}}\Vert_{L^2}^{2}+ \displaystyle\frac{D_n}{8}\Vert \xi^{m-1}_{n}\Vert_{H^1}^{2}  \nonumber\\
&&\!\!\!\!\!\!\!\!\!+\frac{C\rho}{D_{\mathbf{u}}D_c^2} \Vert \xi^{m-1}_{\mathbf{u}}\Vert_{L^2}^{2}\Vert [{\boldsymbol{\sigma}}^{m-1}_h, {\boldsymbol{\sigma}}^{m-1}]\Vert_{H^1}^{2}+ \displaystyle\frac{C}{D_c} (h^{2(r_3 +1)}\Vert {\boldsymbol{\sigma}}^{m-1}\Vert_{H^{r_3 + 1}}^2+\Vert \xi^{m-1}_{\boldsymbol{\sigma}}\Vert_{L^2}^2) \Vert \mathbf{u}^{m-1}\Vert_{H^2}^2   \nonumber\\
&&\!\!\!\!\!\!\!\!\!+\frac{C\gamma^4}{D_n D_c^2}\Vert \xi^{m-1}_{n}\Vert_{L^2}^{2} \Vert c^{m-1}_h\Vert_{H^1}^{2}+  \displaystyle\frac{C\gamma^2}{D_c}h^{2(r_1 +1)}\Vert {c}^{m-1}\Vert_{H^2}^2  \Vert n^{m-1}\Vert_{H^{r_1 + 1}}^2\nonumber\\
&&\!\!\!\!\!\!\!\!\!+\frac{C\gamma^2}{D_c} \Vert n^{m-1}\Vert_{H^2}^2  \Vert \xi^{m-1}_c\Vert_{L^2}^2.
\end{eqnarray}
\vspace{0.5 cm}

{\underline{\it 5. Estimate for the terms $\Vert n^m - n^{m-1}\Vert_{L^2}$, $\Vert c^m - c^{m-1}\Vert_{L^2}$, $\Vert \mathbf{u}^m - \mathbf{u}^{m-1}\Vert_{L^2}$ and $\Vert{\boldsymbol{\sigma}^m} - \boldsymbol{\sigma}^{m-1}]\Vert_{L^2}$}}\\

Observe that the following estimate holds
\begin{eqnarray}\label{mm1}
&\displaystyle\Delta t \sum_{m=1}^r \Vert [n^m - n^{m-1}&\!\!\!\!,c^m-c^{m-1},\mathbf{u}^m - \mathbf{u}^{m-1},{\boldsymbol{\sigma}^m} - \boldsymbol{\sigma}^{m-1}]\Vert_{L^2}^2\nonumber\\
&&\leq C(\Delta t)^4 \Vert [n_{tt},c_{tt},\mathbf{u}_{tt},{\boldsymbol{\sigma}}_{tt}]\Vert^2_{L^2(L^2)} + C(\Delta t)^2 \Vert [n_{t},c_{t},\mathbf{u}_{t},{\boldsymbol{\sigma}}_{t}\Vert^2_{L^2(L^2)}.
\end{eqnarray}
In fact, 
$$
\Vert \omega_n^m\Vert_{L^2}=\Vert \delta_t n^m - (n^m)_t\Vert_{L^2}=\Big\Vert \frac{1}{\Delta t} (n^m- n^{m-1}) - (n^m)_t\Big\Vert_{L^2}\leq C(\Delta t)^{1/2} \Big(\int_{t_{m-1}}^{t_m}\Vert n_{tt}(t)\Vert_{L^2}^2 dt\Big)^{1/2}, 
$$
where the last inequality was obtained as in (\ref{Ea1a}), with the space norm in $L^2$ instead of $(H^1)',$ the dual of $H^1$. Therefore, we deduce 
$$
\Delta t \sum_{m=1}^r \Vert n^m - n^{m-1}\Vert_{L^2}^2\leq C(\Delta t)^4 \Vert n_{tt}\Vert^2_{L^2(L^2)} + C(\Delta t)^2 \Vert n_t\Vert^2_{L^2(L^2)}.
$$
Analogously, we obtain the estimate for $c, \mathbf{u}$ and ${\boldsymbol{\sigma}}$.

\vspace{0.7 cm}
Then, we can prove the following results:
 \begin{theo}\label{theo1N}
	Under Hypothesis of Lemma \ref{uen}, the following error estimate holds
\begin{equation}\label{EEtheo1}
\|[\xi^m_n,\xi^m_c,\xi^m_{\mathbf{u}},\xi^m_{\boldsymbol{\sigma}}]\|_{l^{\infty}(L^2)\cap l^2(H^1)} \leq C(T) \Big(\Delta t +\max\{h^{r_1+1},h^{r_2+1},h^{r_3+1},
h^{r+1} \}\Big),
\end{equation}
	where the constant $C(T)>0$ is independent of $m, \Delta t$ and $h$.
\end{theo}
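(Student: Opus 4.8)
The plan is to collapse the four term-by-term error identities (\ref{errnfin}), (\ref{errcfin}), (\ref{errufin}) and (\ref{errsfin}) into a single discrete energy inequality and then close it with the discrete Gronwall Lemma \ref{Diego2}. First I would add these four relations, multiply the sum by $\Delta t$, and add over $m=1,\dots,r$. The telescoping identity $\Delta t\sum_{m=1}^{r}\tfrac12\delta_t\Vert\xi^m\Vert_{L^2}^2=\tfrac12(\Vert\xi^r\Vert_{L^2}^2-\Vert\xi^0\Vert_{L^2}^2)$ applies to each of the four error variables, and since the scheme is initialized with the interpolants ($n_h^0=\mathbb{P}_n n_0$, $c_h^0=\mathbb{P}_c c_0$, $\mathbf{u}_h^0=\mathbb{P}_{\mathbf{u}}\mathbf{u}_0$, $\boldsymbol{\sigma}_h^0=\mathbb{P}_{\boldsymbol{\sigma}}\boldsymbol{\sigma}_0$), every initial discrete error vanishes, $\xi_n^0=\xi_c^0=\xi_{\mathbf{u}}^0=\xi_{\boldsymbol{\sigma}}^0=0$. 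This removes all boundary contributions at $m=0$ and is used repeatedly below.

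The most delicate point, which I expect to be the main obstacle, is the treatment of the \emph{shifted dissipation} terms. Inequality (\ref{errsfin}) carries on its right-hand side the two terms $\tfrac{D_{\mathbf{u}}}{4\rho}\Vert\nabla\xi_{\mathbf{u}}^{m-1}\Vert_{L^2}^2$ and $\tfrac{D_n}{8}\Vert\xi_n^{m-1}\Vert_{H^1}^2$, produced by the $\boldsymbol{\sigma}$--$\mathbf{u}$ and $\boldsymbol{\sigma}$--$n$ couplings estimated in $R_6+R_9$, while (\ref{errnfin}) already carries $-\tfrac{D_n}{8}\Vert\xi_n^{m-1}\Vert_{H^1}^2$ on its left. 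After multiplying by $\Delta t$ and summing, a shift of index turns each such term into $\Delta t\sum_{m=0}^{r-1}\Vert\cdot\Vert^2$, which, using $\xi^0=0$, is dominated by the corresponding left-hand dissipation sum $\Delta t\sum_{m=1}^{r}\Vert\cdot\Vert^2$. A direct count then shows that the $\xi_n$-dissipation coefficient collapses from $\tfrac{D_n}{2}$ to $\tfrac{D_n}{4}$ and the velocity-dissipation coefficient from $\tfrac{D_{\mathbf{u}}}{2\rho}$ to $\tfrac{D_{\mathbf{u}}}{4\rho}$, yet both stay strictly positive; the nonnegative quantities $\tfrac{\Delta t}{2}\Vert\delta_t\xi^m\Vert_{L^2}^2$ are simply discarded. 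This absorption is precisely what the fractional splits $D_n/12$, $D_{\mathbf{u}}/(10\rho)$, $D_c/8$, \dots chosen in Steps~1--4 were arranged to permit, and it is the only place where positivity of the resulting energy is not automatic.

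Next I would classify the remaining right-hand side into three groups. The consistency contributions $\Delta t\!\int_{t_{m-1}}^{t_m}\!\Vert n_{tt}\Vert_{(H^1)'}^2$, $\int\Vert c_{tt}\Vert_{(H^1)'}^2$, $\int\Vert\mathbf{u}_{tt}\Vert_{(H^1)'}^2$, $\int\Vert\boldsymbol{\sigma}_{tt}\Vert_{(H^1)'}^2$ sum to $C(\Delta t)^2$; the interpolation contributions carrying $h^{2(r_1+1)}$, $h^{2(r_2+1)}$, $h^{2(r_3+1)}$ and $h^{2(r+1)}$ sum to $C\max\{h^{2(r_i+1)},h^{2(r+1)}\}$ after pulling out the uniformly bounded exact-solution norms $\Vert n_t\Vert_{L^2(H^{r_1+1})}$, $\Vert[\mathbf{u},\pi]\Vert$, etc.; and the finite-difference increments $\Vert n^m-n^{m-1}\Vert_{L^2}^2$, $\Vert\boldsymbol{\sigma}^m-\boldsymbol{\sigma}^{m-1}\Vert_{L^2}^2$, \dots are absorbed into $C(\Delta t)^2$ by estimate (\ref{mm1}). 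What remains are the implicit feedback terms $\Vert\xi_n^{m-1}\Vert_{L^2}^2$, $\Vert\xi_c^{m-1}\Vert_{L^2}^2$, $\Vert\xi_{\mathbf{u}}^{m-1}\Vert_{L^2}^2$, $\Vert\xi_{\boldsymbol{\sigma}}^{m-1}\Vert_{L^2}^2$, whose Gronwall weights $g^k$ satisfy $\Delta t\sum_k g^k\le C(T)$: either the multiplying factor is bounded uniformly in $m$ (the continuous norms $\Vert n^m\Vert_{H^2}$, $\Vert c^m\Vert_{H^2}$, $\Vert\mathbf{u}^m\Vert_{H^2}$, $\Vert\nabla\phi\Vert_{L^3}$, by regularity) or it is only time-summable after the factor $\Delta t$ (the discrete norm $\Vert c_h^{m-1}\Vert_{H^1}^2$, by the $l^2(H^1)$ bound of Lemma \ref{uec}), while $\Vert\boldsymbol{\sigma}_h^{m-1}\Vert_{H^1}$ is controlled uniformly by the inductive hypothesis (\ref{IndHyp}).

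Setting $d^{m}=\Vert[\xi_n^{m},\xi_c^{m},\xi_{\mathbf{u}}^{m},\xi_{\boldsymbol{\sigma}}^{m}]\Vert_{L^2}^2$ and letting $b^{m}$ be the net strictly positive dissipation isolated in the second paragraph, the accumulated inequality matches the hypothesis of Lemma \ref{Diego2} with $\Delta t\sum h^k+B=C\big((\Delta t)^2+\max\{h^{2(r_i+1)},h^{2(r+1)}\}\big)$ and $\Delta t\sum g^k\le C(T)$. The lemma yields $d^{r}+\Delta t\sum_{m=1}^{r}b^{m}\le C(T)\big((\Delta t)^2+\max\{h^{2(r_i+1)},h^{2(r+1)}\}\big)$ with $C(T)$ independent of $m,\Delta t,h$. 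It then remains to upgrade the dissipation sum to the $l^2(H^1)$ norm: for $\xi_{\mathbf{u}}$ this is immediate from Poincar\'e (\ref{PIa}); for $\xi_{\boldsymbol{\sigma}}$ it follows from the equivalent norm (\ref{EQs}) together with the $l^\infty(L^2)$ control just obtained; for $\xi_c$ it follows from (\ref{EQu}) and $(\int_\Omega\xi_c)^2\le|\Omega|\,\Vert\xi_c\Vert_{L^2}^2$; and $\xi_n$ already appears in the full $H^1$-norm. Taking square roots delivers exactly (\ref{EEtheo1}).
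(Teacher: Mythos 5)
Your derivation of the estimate itself follows the paper's route essentially step for step: add (\ref{errnfin}), (\ref{errcfin}), (\ref{errufin}) and (\ref{errsfin}), multiply by $\Delta t$, sum, telescope using $[\xi^0_n,\xi^0_c,\xi^0_{\mathbf{u}},\xi^0_{\boldsymbol{\sigma}}]=[0,0,{\bf 0},{\bf 0}]$, absorb the shifted dissipation terms $\tfrac{D_n}{8}\Vert\xi_n^{m-1}\Vert_{H^1}^2$ and $\tfrac{D_{\mathbf{u}}}{4\rho}\Vert\nabla\xi_{\mathbf{u}}^{m-1}\Vert_{L^2}^2$ coming from $R_6+R_9$ into the left-hand dissipation after an index shift, control the consistency and increment terms via (\ref{mm1}) and the regularity (\ref{regul}), and close with Lemma \ref{Diego2} using Lemmas \ref{uen}--\ref{uec} and (\ref{IndHyp}) to bound the Gronwall weights. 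Your bookkeeping of the surviving dissipation coefficients ($D_n/2\to D_n/4$, $D_{\mathbf{u}}/2\rho\to D_{\mathbf{u}}/4\rho$) and the recovery of the full $l^2(H^1)$ norms via (\ref{PIa}), (\ref{EQu}) and (\ref{EQs}) are correct and consistent with what the paper intends.

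The one substantive omission is the second half of the paper's proof: the verification of the inductive hypothesis (\ref{IndHyp}). You invoke $\Vert\boldsymbol{\sigma}_h^{m-1}\Vert_{H^1}\leq K$ as given, which is defensible since the theorem is stated ``under the Hypothesis of Lemma \ref{uen}''; but the paper explicitly discharges that hypothesis inside this very proof, and without doing so the entire chain (Lemmas \ref{uen}, \ref{uec} and the estimate (\ref{EEtheo1})) remains conditional on an unverified assumption. The paper's argument is a bootstrap: from $\Vert\mathbb{P}_{\boldsymbol{\sigma}}\boldsymbol{\sigma}^{m-1}\Vert_{H^1}\leq C_0$ and the splitting $\Vert\boldsymbol{\sigma}_h^{m-1}\Vert_{H^1}\leq\Vert\xi^{m-1}_{\boldsymbol{\sigma}}\Vert_{H^1}+\Vert\mathbb{P}_{\boldsymbol{\sigma}}\boldsymbol{\sigma}^{m-1}\Vert_{H^1}$ it suffices to show $\Vert\xi^{m-1}_{\boldsymbol{\sigma}}\Vert_{H^1}\leq 1$, which is obtained from (\ref{EEtheo1}) by distinguishing two regimes: if $(\Delta t)^{1/2}/h\leq C$ one uses the inverse inequality $\Vert\xi^m_{\boldsymbol{\sigma}}\Vert_{H^1}\leq h^{-1}\Vert\xi^m_{\boldsymbol{\sigma}}\Vert_{L^2}$ together with the $l^\infty(L^2)$ part of the estimate; otherwise $h^2/\Delta t\leq C$ and one uses the $l^2(H^1)$ part. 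In either case $\Vert\xi^{m}_{\boldsymbol{\sigma}}\Vert_{H^1}\to 0$ as $(\Delta t,h)\to 0$ with no mesh restriction, so (\ref{IndHyp}) propagates recursively from $m=1$ onward. You should add this step (or at least acknowledge that it is deferred), since it is what makes the theorem unconditional in practice.
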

\begin{proof}
The proof follows adding the four inequalities (\ref{errnfin}), (\ref{errcfin}), (\ref{errufin}) and (\ref{errsfin}), multiplying the resulting expresion  by $\Delta t$, adding from $m=1$ to $m=s$, using (\ref{mm1}), (\ref{regul}) and Lemmas \ref{uen} - \ref{uec}, and applying the discrete Gronwall Lemma \ref{Diego2} (recalling that $[\xi^0_n,\xi^0_c,\xi^0_{\mathbf{u}},\xi^0_{\boldsymbol{\sigma}}]=[0,0,{\bf 0},{\bf 0}]$). It is clear that the error estimates were derived under the inductive hypothesis (\ref{IndHyp}). Now we have to check it. 
We derive (\ref{IndHyp}) by using (\ref{EEtheo1}) recursively. First, observe that $\Vert \mathbb{P}_{\boldsymbol{\sigma}}\boldsymbol{\sigma}^{m-1}\Vert_{H^1}\leq \Vert {\boldsymbol{\sigma}}\Vert_{L^\infty(H^1)}:=C_0$ for all $m\geq 1,$, from which we deduce that $\Vert {\boldsymbol{\sigma}}^{0}_h\Vert_{H^1}= \Vert \mathbb{P}_{\boldsymbol{\sigma}} {\boldsymbol{\sigma}}_0 \Vert_{H^1}\leq C_0\leq C_0+1:=K$. Now,  notice that
\begin{equation}\label{IndHyp1}
\Vert {\boldsymbol{\sigma}}^{m-1}_h\Vert_{H^1}\leq \Vert \xi^{m-1}_{\boldsymbol{\sigma}}\Vert_{H^1}+\Vert \mathbb{P}_{\boldsymbol{\sigma}}\boldsymbol{\sigma}^{m-1}\Vert_{H^1}.
\end{equation}
Then, it is enough to show that $\Vert  \xi^{m-1}_{\boldsymbol{\sigma}}\Vert_{H^1}\leq 1,$ for each $m\geq 2.$ For that, we consider two cases: First,  if $\frac{(\Delta t)^{1/2}}{h}\leq C,$ then, by using the inverse inequality  $\Vert \xi^m_{\boldsymbol{\sigma}}\Vert_{H^1}\leq h^{-1} \Vert \xi^m_{\boldsymbol{\sigma}}\Vert_{L^2},$ denoting $h_{max}:=\max\{h^{r_1+1},h^{r_2+1},h^{r_3+1},
h^{r+1} \},$ recalling that $h_{max}= h^k,$ (for some $k\geq 2$), and from the calculation of the norm $l^\infty(L^2)$ in (\ref{EEtheo1}), we get 
\begin{eqnarray}\label{j1}
\Vert \xi^1_{\boldsymbol{\sigma}}\Vert_{H^1}\leq \frac{1}{h}\Vert \xi^1_{\boldsymbol{\sigma}}\Vert_{L^2}\leq C(T,\Vert {\boldsymbol{\sigma}}^0_h\Vert_{H^1})\frac{1}{h}(\Delta t+h_{max})\leq  C(T,\Vert {\boldsymbol{\sigma}}^0_h\Vert_{H^1})(C(\Delta t)^{1/2}+h^{k-1}).
\end{eqnarray}
On the other hand, if $\frac{(\Delta t)^{1/2}}{h}$ is not bounded, then $\frac{h_{max}}{\Delta t}\leq \frac{h^2}{\Delta t}\leq C;$ thus, from the calculation of the norm $l^2(H^1)$ in (\ref{EEtheo1}), we get
\begin{eqnarray}\label{j2}
\Vert \xi^1_{\boldsymbol{\sigma}}\Vert^2_{H^1}\leq C(T,\Vert {\boldsymbol{\sigma}}^0_h\Vert_{H^1})\frac{1}{\Delta t}((\Delta t)^{2}+h_{max}^2)\leq  C(T,\Vert {\boldsymbol{\sigma}}^0_h\Vert_{H^1})(\Delta t+C h_{max}).
\end{eqnarray}
In any case, assuming $\Delta t$ and $h$ small enough (without any additional restriction relating the discrete parameters $(\Delta t,h)$), we conclude that $\Vert {\boldsymbol{\sigma}}^{1}_h\Vert_{H^1}\leq K$. Arguing recursively we conclude that $\Vert {\boldsymbol{\sigma}}^{m-1}_h\Vert_{H^1}\leq K$, for all $m\geq 1$.
\end{proof}

 \begin{theo}\label{theo1NCor}
Under Hypothesis of Lemma \ref{uen}, the following error estimate holds
\begin{equation}\label{EEtheo2}
\|\xi^m_{\mathbf{u}}\|_{l^{\infty}(H^1)\cap l^2(W^{1,6})} + \|\xi^m_{\pi}\|_{l^{2}(L^6)} \leq C(T) \Big(\Delta t +\max\{h^{r_1+1},h^{r_2+1},h^{r_3+1},
	h^{r}\}\Big),
\end{equation}
	where the constant $C(T)>0$ is independent of $m, \Delta t$ and $h$.
\end{theo}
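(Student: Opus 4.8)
The plan is to climb from the weak-norm bounds of Theorem \ref{theo1N} to the stronger norms in two stages: first upgrade the velocity error to $l^{\infty}(H^1)$ (while simultaneously controlling the discrete time derivative $\delta_t\xi_{\mathbf{u}}^m$ in $l^2(L^2)$) by testing the momentum error equation with $\delta_t\xi_{\mathbf{u}}^m$; and then reinterpret the velocity--pressure error pair as a discrete \emph{stationary} Stokes system and invoke its $W^{1,6}\times L^6$ stability to reach $l^2(W^{1,6})$ for $\xi_{\mathbf{u}}^m$ and $l^2(L^6)$ for $\xi_{\pi}^m$.

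For the first stage I would take $\bar{\mathbf{u}}=\delta_t\xi_{\mathbf{u}}^m$ in (\ref{erru-int}) and $\bar{\pi}=\tfrac1\rho\xi_\pi^m$ in (\ref{errpi-int}). The pressure coupling disappears because $(\xi_\pi^m,\nabla\cdot\delta_t\xi_{\mathbf{u}}^m)=\tfrac1{\Delta t}\big[(\xi_\pi^m,\nabla\cdot\xi_{\mathbf{u}}^m)-(\xi_\pi^m,\nabla\cdot\xi_{\mathbf{u}}^{m-1})\big]=0$, using (\ref{errpi-int}) at levels $m$ and $m-1$ with $\bar{\pi}=\xi_\pi^m\in\mathcal{X}_\pi$. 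The diffusion term telescopes via $(\nabla\xi_{\mathbf{u}}^m,\nabla(\xi_{\mathbf{u}}^m-\xi_{\mathbf{u}}^{m-1}))=\tfrac12(\|\nabla\xi_{\mathbf{u}}^m\|_{L^2}^2-\|\nabla\xi_{\mathbf{u}}^{m-1}\|_{L^2}^2+\|\nabla(\xi_{\mathbf{u}}^m-\xi_{\mathbf{u}}^{m-1})\|_{L^2}^2)$, so the left-hand side retains $\|\delta_t\xi_{\mathbf{u}}^m\|_{L^2}^2$ together with the telescoped gradient. The consistency error $\omega_{\mathbf{u}}^m$, the interpolation term $\delta_t\theta_{\mathbf{u}}^m$ and the forcing term $\tfrac1\rho(n^m-n^{m-1}+\xi^{m-1}_n+\theta^{m-1}_n)\nabla\phi$ are paired against $\delta_t\xi_{\mathbf{u}}^m$ in $L^2$ and a fraction of $\|\delta_t\xi_{\mathbf{u}}^m\|_{L^2}^2$ absorbed by Young, the remainders being controlled through (\ref{regul}), (\ref{StabStk1})--(\ref{aprox01}), (\ref{mm1}) and the $l^\infty(L^2)\cap l^2(H^1)$ bounds of Theorem \ref{theo1N} and Lemmas \ref{uen}--\ref{uec}. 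Multiplying by $\Delta t$, summing from $m=1$ to $r$ and applying the discrete Gronwall Lemma \ref{Diego2} (with $\xi_{\mathbf{u}}^0={\bf 0}$) yields the $l^\infty(H^1)$ bound and $\Delta t\sum_m\|\delta_t\xi_{\mathbf{u}}^m\|_{L^2}^2\le C(\Delta t+\max_i h^{r_i+1})^2$, both of the claimed order.

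For the second stage I read (\ref{erru-int})--(\ref{errpi-int}) as a discrete Stokes problem for $[\xi_{\mathbf{u}}^m,\xi_\pi^m]$ with data $\mathbf{G}^m:=\omega_{\mathbf{u}}^m-\delta_t\theta_{\mathbf{u}}^m-\delta_t\xi_{\mathbf{u}}^m-(\text{trilinear terms})+\tfrac1\rho(\dots)\nabla\phi$, and apply the discrete $W^{1,6}\times L^6$ Stokes estimate (the discrete counterpart of (\ref{StabStk2}) combined with the inf--sup condition (\ref{LBB})) to obtain $\|\xi_{\mathbf{u}}^m\|_{W^{1,6}}+\|\xi_\pi^m\|_{L^6}\le C\|\mathbf{G}^m\|_{\mathbf{L}^{6/5}}$. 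The term $\delta_t\xi_{\mathbf{u}}^m$ is handled by its $l^2(L^2)$ bound from the first stage (since $L^2\hookrightarrow L^{6/5}$); the trilinear contributions are bounded through $\|(\mathbf{a}\cdot\nabla)\mathbf{b}\|_{L^{6/5}}\le\|\mathbf{a}\|_{L^3}\|\nabla\mathbf{b}\|_{L^2}$, using the uniform $H^1$ (hence $L^3$) control of $\mathbf{u}_h^{m-1}=\mathbb{P}_{\mathbf{u}}\mathbf{u}^{m-1}-\xi_{\mathbf{u}}^{m-1}$ together with the newly gained $l^\infty(H^1)$ bound on $\xi_{\mathbf{u}}$; the remaining consistency, interpolation and forcing pieces are estimated directly. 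Squaring, multiplying by $\Delta t$ and summing then gives the $l^2(W^{1,6})$ and $l^2(L^6)$ estimates with the rate $\Delta t+\max\{h^{r_1+1},h^{r_2+1},h^{r_3+1},h^{r}\}$.

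I expect the genuine obstacle to lie in the trilinear terms of the first stage, specifically $B(\mathbf{u}_h^{m-1},\xi_{\mathbf{u}}^m,\delta_t\xi_{\mathbf{u}}^m)$, where the factor $\delta_t\xi_{\mathbf{u}}^m$ is only available in $L^2$ yet occupies the slot that otherwise would absorb a derivative. To avoid any CFL-type link between $\Delta t$ and $h$ I would split $\mathbf{u}_h^{m-1}=\mathbb{P}_{\mathbf{u}}\mathbf{u}^{m-1}-\xi_{\mathbf{u}}^{m-1}$: the projection part is tamed with the uniform $W^{1,6}\hookrightarrow L^\infty$ bound from (\ref{StabStk2}), whereas the genuinely quadratic part $B(\xi_{\mathbf{u}}^{m-1},\xi_{\mathbf{u}}^m,\delta_t\xi_{\mathbf{u}}^m)$ must be reorganized through the skew-symmetries (\ref{a7}) and $B(\mathbf{v}_1,\mathbf{v}_2,\mathbf{v}_3)=-B(\mathbf{v}_1,\mathbf{v}_3,\mathbf{v}_2)$: writing $\delta_t\xi_{\mathbf{u}}^m=\tfrac1{\Delta t}(\xi_{\mathbf{u}}^m-\xi_{\mathbf{u}}^{m-1})$ and exploiting $B(\mathbf{a},\mathbf{b},\mathbf{b})=0$ cancels the top-order piece and leaves lower-order contributions estimable by the $2$D interpolation inequality (\ref{in2D}) and the $l^\infty(H^1)\cap l^2(H^1)$ bounds. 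Making this absorption work with constants that are genuinely independent of $(\Delta t,h)$ is, I believe, the delicate heart of the argument.
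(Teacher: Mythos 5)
Your two key ingredients are the same as the paper's: testing the momentum error equation with $\delta_t\xi_{\mathbf{u}}^m$ (with the pressure coupling killed exactly as you describe, via (\ref{errpi-int}) at levels $m$ and $m-1$), and a discrete $W^{1,6}\times L^6$ Stokes regularity estimate for the pair $[\xi_{\mathbf{u}}^m,\xi_\pi^m]$ (the paper invokes Lemma 11 of \cite{GV}, yielding (\ref{errPI1})). The gap is in your decision to run these two steps \emph{sequentially}. Your Stage 1 does not close on its own, and you have correctly located where it breaks -- the quadratic trilinear term -- but your proposed repair does not work. The skew-symmetry manipulation only converts $B(\xi_{\mathbf{u}}^{m-1},\xi_{\mathbf{u}}^m,\delta_t\xi_{\mathbf{u}}^m)$ into $B(\xi_{\mathbf{u}}^{m-1},\xi_{\mathbf{u}}^{m-1},\delta_t\xi_{\mathbf{u}}^m)$; nothing of top order cancels. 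That term pairs $(\xi_{\mathbf{u}}^{m-1}\cdot\nabla)\xi_{\mathbf{u}}^{m-1}$ against a function you control only in $L^2$, so you need $\|\xi_{\mathbf{u}}^{m-1}\|_{L^\infty}\|\nabla\xi_{\mathbf{u}}^{m-1}\|_{L^2}$ or $\|\xi_{\mathbf{u}}^{m-1}\|_{L^6}\|\nabla\xi_{\mathbf{u}}^{m-1}\|_{L^3}$-type products, neither of which is controlled by the $l^\infty(L^2)\cap l^2(H^1)$ information of Theorem \ref{theo1N}; falling back on inverse inequalities reintroduces exactly the CFL-type coupling of $(\Delta t,h)$ you set out to avoid.

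The paper's resolution is to keep the two estimates coupled: it adds $\lambda$ times the $W^{1,6}\times L^6$ bound (\ref{errPI1}) to the $\delta_t\xi_{\mathbf{u}}^m$-tested identity, so that $\lambda\Vert[\xi_{\mathbf{u}}^m,\xi_\pi^m]\Vert_{W^{1,6}\times L^6}^2$ sits on the \emph{left} of (\ref{errPI2}) and can absorb the $\varepsilon\Vert\xi_{\mathbf{u}}^m\Vert_{W^{1,6}}^2$ contributions produced when the quadratic trilinear terms are estimated via (\ref{in3D}) and Young's inequality. The residue of that absorption is the quartic factor $\Vert\xi_{\mathbf{u}}^{m-1}\Vert_{H^1}^4+\Vert\xi_{\mathbf{u}}^{m-1}\Vert_{H^1}^2\Vert\xi_{\mathbf{u}}^{m}\Vert_{H^1}^2$ multiplying $\Vert\xi_{\mathbf{u}}^m\Vert_{H^1}^2$, and the unconditional character of the result is then rescued by the dichotomy (\ref{HHa})--(\ref{HHb}): for any $(\Delta t,h)$ either $h_{max}^4/\Delta t$ or $(\Delta t)^4/h^4$ is bounded, and in each case (\ref{EEtheo1}) (used once via the $l^2(H^1)$ reading, once via the inverse inequality on the $l^\infty(L^2)$ reading) gives (\ref{pre2}). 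If you want to keep your two-stage architecture you must either import the $W^{1,6}$ norm into Stage 1 from the start (which is precisely the paper's simultaneous treatment) or find a genuinely different bound for $B(\xi_{\mathbf{u}}^{m-1},\xi_{\mathbf{u}}^m,\delta_t\xi_{\mathbf{u}}^m)$ that uses only weak norms; as written, the proposal does not supply one. Your Stage 2, by contrast, is sound once Stage 1 is available.
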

\begin{proof}
Applying Lemma 11 of \cite{GV} to (\ref{erru-int})-(\ref{errpi-int}) and usind the interpolation inequality (\ref{in3D}), we have
\begin{eqnarray}\label{errPI1}
&\Vert [\xi_{\mathbf{u}}^m,\xi_{\pi}^m]\Vert_{W^{1,6}\times L^6}^2&\!\!\!\! \leq C (\Vert \delta_t \xi_{\mathbf{u}}^m\Vert_{L^2}^2+ \Vert \omega_{\mathbf{u}}^m\Vert_{L^2}^2+ \Vert \delta_t \theta_{\mathbf{u}}^m\Vert_{L^2}^2+  \Vert \mathbf{u}^{m}-\mathbf{u}^{m-1}\Vert_{L^2}^2 \Vert \nabla \mathbf{u}^{m}\Vert_{L^\infty}^2\nonumber\\
&&+(\Vert \xi_{\mathbf{u}}^{m-1}\Vert_{H^1}^2+\Vert \theta_{\mathbf{u}}^{m-1}\Vert_{H^1}^2) \Vert \mathbf{u}^{m}\Vert_{L^\infty\cap W^{1,3}}^2 + \Vert \xi_{\mathbf{u}}^{m-1}\Vert_{H^1}^2 \Vert \theta_{\mathbf{u}}^{m}\Vert_{L^\infty\cap W^{1,3}}^2\nonumber\\
&&  + \Vert \mathbb{P}_{\mathbf{u}} \mathbf{u}^{m-1}\Vert_{L^\infty\cap W^{1,3}}^2(\Vert \theta_{\mathbf{u}}^{m}\Vert_{H^1}^2+ \Vert \xi_{\mathbf{u}}^{m}\Vert_{H^1}^2)  + \varepsilon(\Vert \xi_{\mathbf{u}}^{m}\Vert_{W^{1,6}}^2 +  \Vert \xi_{\mathbf{u}}^{m-1}\Vert_{W^{1,6}}^2 )\nonumber\\
&& + C_{\varepsilon} \Vert \xi_{\mathbf{u}}^{m}\Vert_{H^1}^2(\Vert \xi_{\mathbf{u}}^{m-1}\Vert_{H^1}^4+\Vert \xi_{\mathbf{u}}^{m-1}\Vert_{H^1}^2\Vert \xi_{\mathbf{u}}^{m}\Vert_{H^1}^2) \nonumber\\
&& + (\Vert n^m- n^{m-1} \Vert_{L^2}^2+\Vert \xi_{n}^{m-1}\Vert_{L^2}^2+\Vert \theta_{n}^{m-1}\Vert_{L^2}^2) \Vert \nabla \phi\Vert_{L^\infty}^2),
\end{eqnarray}
where the constant $C$ depends on the data $(\rho, D_{\mathbf{u}})$. Now, testing (\ref{erru-int}) by $\bar{\mathbf{u}}=\delta_t \xi_{\mathbf{u}}^m$, bounding the terms on the  right hand side as in (\ref{errPI1}) (taking into account that the pressure term vanishes), and adding the resulting expresion with $\lambda$(\ref{errPI1}) (for $0<\lambda<1$), we arrive at 
\begin{eqnarray}\label{errPI2}
& \displaystyle \frac{D_{\mathbf{u}}}{2\rho} &\!\!\!\! \delta_t \Vert \xi_{\mathbf{u}}^{m}\Vert_{H^1}^2 + \Vert \delta_t \xi_{\mathbf{u}}^{m}\Vert_{L^2}^2+ \lambda\Vert [\xi_{\mathbf{u}}^m,\xi_{\pi}^m]\Vert_{W^{1,6}\times L^6}^2 \leq C (\lambda\Vert \delta_t \xi_{\mathbf{u}}^m\Vert_{L^2}^2+ \Vert \omega_{\mathbf{u}}^m\Vert_{L^2}^2+ \Vert \delta_t \theta_{\mathbf{u}}^m\Vert_{L^2}^2\nonumber\\
&&\!\!\!\!+  \Vert \mathbf{u}^{m}-\mathbf{u}^{m-1}\Vert_{L^2}^2 \Vert \nabla \mathbf{u}^{m}\Vert_{L^\infty}^2+(\Vert \xi_{\mathbf{u}}^{m-1}\Vert_{H^1}^2+\Vert \theta_{\mathbf{u}}^{m-1}\Vert_{H^1}^2) \Vert \mathbf{u}^{m}\Vert_{L^\infty\cap W^{1,3}}^2 + \Vert \xi_{\mathbf{u}}^{m-1}\Vert_{H^1}^2 \Vert \theta_{\mathbf{u}}^{m}\Vert_{L^\infty\cap W^{1,3}}^2\nonumber\\
&&\!\!\!\!  + \Vert \mathbb{P}_{\mathbf{u}} \mathbf{u}^{m-1}\Vert_{L^\infty\cap W^{1,3}}^2(\Vert \theta_{\mathbf{u}}^{m}\Vert_{H^1}^2+ \Vert \xi_{\mathbf{u}}^{m}\Vert_{H^1}^2)  + \varepsilon(\Vert \xi_{\mathbf{u}}^{m}\Vert_{W^{1,6}}^2 +  \Vert \xi_{\mathbf{u}}^{m-1}\Vert_{W^{1,6}}^2 )\nonumber\\
&& \!\!\!\! + C_{\varepsilon} \Vert \xi_{\mathbf{u}}^{m}\Vert_{H^1}^2(\Vert \xi_{\mathbf{u}}^{m-1}\Vert_{H^1}^4+\Vert \xi_{\mathbf{u}}^{m-1}\Vert_{H^1}^2\Vert \xi_{\mathbf{u}}^{m}\Vert_{H^1}^2) \nonumber\\
&& \!\!\!\!+ (\Vert n^m- n^{m-1} \Vert_{L^2}^2+\Vert \xi_{n}^{m-1}\Vert_{L^2}^2+\Vert \theta_{n}^{m-1}\Vert_{L^2}^2) \Vert \nabla \phi\Vert_{L^\infty}^2).
\end{eqnarray}
We take $\lambda$ small enough in order to absorb the term $C \lambda\Vert \delta_t \xi_{\mathbf{u}}^m\Vert_{L^2}^2 $ at the right hand side, and $\varepsilon$ small enough with respect to $\lambda$. Moreover, observe that 
\begin{equation}\label{pre2}
\Vert \xi_{\mathbf{u}}^{m-1}\Vert_{H^1}^4+\Vert \xi_{\mathbf{u}}^{m-1}\Vert_{H^1}^2\Vert \xi_{\mathbf{u}}^{m}\Vert_{H^1}^2\leq C.
\end{equation}
In fact, recalling that $h_{max}:=\max\{h^{r_1+1},h^{r_2+1},h^{r_3+1},
h^{r+1} \}$, from estimate (\ref{EEtheo1}) we have in particular that $\|\xi^m_{\mathbf{u}}\|_{H^1}^2 \leq C(T) \Big(\Delta t +\frac{1}{\Delta t}h_{max}^2\Big)$, which implies that
$$
\Vert \xi_{\mathbf{u}}^{m-1}\Vert_{H^1}^4+\Vert \xi_{\mathbf{u}}^{m-1}\Vert_{H^1}^2\Vert \xi_{\mathbf{u}}^{m}\Vert_{H^1}^2 \leq C(T) \Big(\Delta t +\frac{1}{\Delta t}h_{max}^2\Big) \Big((\Delta t)^2 +h_{max}^2\Big).
$$
Therefore, we conclude (\ref{pre2}) under the hypothesis 
\begin{equation}\label{HHa}
\frac{h_{max}^4}{\Delta t}\leq C.
\end{equation}
On the other hand, from (\ref{EEtheo1}) we also have $\|\xi^m_{\mathbf{u}}\|_{L^2}^2 \leq C(T) \Big(\Delta t +h_{max}^2\Big)$ for each $m$. Therefore, by using the inverse inequality $\Vert \xi^m_{\mathbf{u}}\Vert_{H^1}\leq h^{-1} \Vert \xi^m_{\mathbf{u}}\Vert_{L^2}$ we obtain
$$
\Vert \xi_{\mathbf{u}}^{m-1}\Vert_{H^1}^4+\Vert \xi_{\mathbf{u}}^{m-1}\Vert_{H^1}^2\Vert \xi_{\mathbf{u}}^{m}\Vert_{H^1}^2 \leq C(T) \frac{1}{h^4}\Big((\Delta t)^2 +h_{max}^2\Big)^2.
$$
Therefore, we conclude (\ref{pre2}) under the hypothesis 
\begin{equation}\label{HHb}
\frac{(\Delta t)^4}{h^4}\leq C.
\end{equation}
Thus, we conclude (\ref{pre2}) without imposing any restriction on the discrete parameters $(\Delta t,h)$ because for any choice of $(\Delta t,h)$ either (\ref{HHa}) or (\ref{HHb}) holds. Therefore, multiplyng (\ref{errPI2}) by $\Delta t$, adding from $m=1$ to $m=s$, bounding the terms $\Vert \omega_{\mathbf{u}}^m\Vert_{L^2}^2$ and $\Vert \delta_t \theta_{\mathbf{u}}^m\Vert_{L^2}^2$ as in (\ref{Ea1au}) and (\ref{Ea1bu}) respectively, using (\ref{regul}),  (\ref{StabStk1}), (\ref{StabStk2}), (\ref{aprox01})$_1$, (\ref{mm1}), (\ref{EEtheo1}) and taking into account that $\xi^0_{\mathbf{u}}={\bf 0}$, we conclude (\ref{EEtheo2}).
\end{proof}

\begin{remark}
From (\ref{EEtheo1})-(\ref{EEtheo2}), in particular we deduce that $\|[\mathbf{u}^m_h,{\boldsymbol{\sigma}}^m_h]\|_{l^{\infty}(L^2)\cap l^2(H^1)} \leq C(T)$ and \newline{$\Vert [\mathbf{u}^m_h,\pi^m_h]\Vert_{l^{\infty}(H^1)\times l^2(L^6)} \leq C(T)$, for all $m=1,...,N$}.
\end{remark}

\begin{remark}
Theorems \ref{theo1N} and \ref{theo1NCor} in particular imply the convergence of the discrete solutions of the scheme (\ref{scheme1}) towards weak solutions of model (\ref{KNS}), when the parameters $\Delta t$ and $h$ go to $0$.
\end{remark}
	
\section{Some comments on the three-dimensional case}\label{Sub3D}
The results obtained in Subsections \ref{NScheme}-\ref{ESWN}
can be proved also in the three-dimensional case. In fact, observe that the unconditional well-posedness of the scheme (\ref{scheme1}) and the mass-conservation property (\ref{mass01b}) were proved independent on the dimension (see Lemma \ref{MCs} and Theorem \ref{WPs}, respectively). Now, in order to analyze the convergence in the 3D case, we need to modify the inductive hypothesis (\ref{IndHyp}) by the following one:  there exists a positive constant $K>0$, independent of $m$, such that
\begin{equation}\label{IndHyp3D}
\Vert [{\boldsymbol{\sigma}}^{m-1}_h,c^{m-1}_h]\Vert_{H^1}\leq K, \qquad \forall m\geq 1.
\end{equation}
Then, we can prove error estimates for any solution of the scheme (\ref{scheme1}), with respect to a sufficiently regular solution of (\ref{Chemoweak3}). 
 \begin{theo}\label{theo1N3D}
Assume that there exists a sufficiently regular solution of (\ref{Chemoweak3}). If the inductive hypothesis (\ref{IndHyp3D}) is satisfied, then the following error estimates hold
	$$\|[\xi^m_n,\xi^m_c,\xi^m_{\mathbf{u}},\xi^m_{\boldsymbol{\sigma}}]\|_{l^{\infty}(L^2)\cap l^2(H^1)} \leq C(T) \Big(\Delta t +\max\{h^{r_1+1},h^{r_2+1},h^{r_3+1},
	h^{r+1} \}\Big),
	$$
	$$
	\|\xi^m_{\mathbf{u}}\|_{l^{\infty}(H^1)\cap L^2(W^{1,6})} + \|\xi^m_{\pi}\|_{l^{2}(L^6)} \leq C(T) \Big(\Delta t +\max\{h^{r_1+1},h^{r_2+1},h^{r_3+1},
	h^{r}\}\Big),
	$$
	where the constant $C(T)>0$ is independent of $m, \Delta t$ and $h$.
\end{theo}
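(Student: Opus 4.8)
The plan is to mirror, term by term, the two-dimensional argument of Theorems \ref{theo1N} and \ref{theo1NCor}: I would re-derive the four local-in-time error inequalities (\ref{errnfin}), (\ref{errcfin}), (\ref{errufin}) and (\ref{errsfin}) and then close the estimate with the discrete Gronwall Lemma \ref{Diego2}. The only structural change lies in the nonlinear coupling terms, where the 2D interpolation (\ref{in2D}) and the embedding $H^1(\Omega)\hookrightarrow L^4(\Omega)$ were used; in three dimensions this embedding fails and must be replaced by $H^1(\Omega)\hookrightarrow L^6(\Omega)$ together with the interpolation inequality (\ref{in3D}). Concretely, in the cell-density inequality the term $I_9$ of (\ref{Ea1e-new}) would now be estimated by Hölder as $\chi(\xi_n^{m-1}{\boldsymbol\sigma}_h^{m-1},\nabla\xi_n^m)\le \chi\|\xi_n^{m-1}\|_{L^3}\|{\boldsymbol\sigma}_h^{m-1}\|_{L^6}\|\nabla\xi_n^m\|_{L^2}$, where $\|{\boldsymbol\sigma}_h^{m-1}\|_{L^6}\le C\|{\boldsymbol\sigma}_h^{m-1}\|_{H^1}\le CK$ by (\ref{IndHyp3D}) and $\|\xi_n^{m-1}\|_{L^3}\le C\|\xi_n^{m-1}\|_{L^2}^{1/2}\|\xi_n^{m-1}\|_{H^1}^{1/2}$ by (\ref{in3D}); Young's inequality then reproduces precisely the three contributions bounded by $\frac{D_n}{12}\|\xi_n^m\|_{H^1}^2$, $\frac{D_n}{8}\|\xi_n^{m-1}\|_{H^1}^2$ and $C\|\xi_n^{m-1}\|_{L^2}^2$ that already appear in (\ref{Ea1e-new}).

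The analogous modification in the flux inequality is the decisive point, and it is exactly where the augmented hypothesis (\ref{IndHyp3D}) becomes indispensable. The block $R_6+R_9$ of (\ref{Ea1e-newS}) contains the product $\gamma c_h^{m-1}\xi_n^{m-1}$ tested against $\nabla\cdot\xi_{\boldsymbol\sigma}^m$. In two dimensions this was absorbed using only the $l^2(H^1)$ control of $c_h^{m-1}$ furnished by Lemma \ref{uec}. In three dimensions, Hölder with $\frac{1}{6}+\frac{1}{3}+\frac{1}{2}=1$ forces the splitting $\gamma(c_h^{m-1}\xi_n^{m-1},\nabla\cdot\xi_{\boldsymbol\sigma}^m)\le \gamma\|c_h^{m-1}\|_{L^6}\|\xi_n^{m-1}\|_{L^3}\|\nabla\cdot\xi_{\boldsymbol\sigma}^m\|_{L^2}$, and the uniform bound $\|c_h^{m-1}\|_{L^6}\le C\|c_h^{m-1}\|_{H^1}\le CK$ is available only through (\ref{IndHyp3D}). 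Using (\ref{in3D}) on $\xi_n^{m-1}$ and Young's inequality then yields $\frac{D_c}{6}\|\nabla\cdot\xi_{\boldsymbol\sigma}^m\|_{L^2}^2+\frac{D_n}{8}\|\xi_n^{m-1}\|_{H^1}^2+CK^4\|\xi_n^{m-1}\|_{L^2}^2$, which matches the structure already present in (\ref{errsfin}) (with a constant, rather than $\|c_h^{m-1}\|_{H^1}^2$-weighted, coefficient on the $L^2$ term). The same recipe, Hölder into $L^6\cdot L^3\cdot L^2$ followed by (\ref{in3D}) and Young, disposes of the remaining triple products $\xi_{\mathbf u}^{m-1}(\xi_{\boldsymbol\sigma}^{m-1}+\theta_{\boldsymbol\sigma}^{m-1})$ and $\theta_n^{m-1}\xi_c^{m-1}$ in $R_6+R_9$, and it also carries Lemmas \ref{uen}-\ref{uec} over to 3D under (\ref{IndHyp3D}) once (\ref{in2D}) is replaced by (\ref{in3D}).

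With the four inequalities in hand, the first estimate follows exactly as in Theorem \ref{theo1N}: add them, multiply by $\Delta t$, sum from $m=1$ to $m=s$, insert (\ref{mm1}), the regularity (\ref{regul}) and Lemmas \ref{uen}-\ref{uec}, and apply Lemma \ref{Diego2}, recalling $[\xi_n^0,\xi_c^0,\xi_{\mathbf u}^0,\xi_{\boldsymbol\sigma}^0]=[0,0,\mathbf 0,\mathbf 0]$. The stronger velocity and pressure estimate then follows the proof of Theorem \ref{theo1NCor}: I would test (\ref{erru-int}) by $\delta_t\xi_{\mathbf u}^m$, combine with the auxiliary bound from Lemma 11 of \cite{GV} and (\ref{in3D}), and use that for every choice of $(\Delta t,h)$ either (\ref{HHa}) or (\ref{HHb}) holds, so that (\ref{pre2}) remains unconditional.

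Finally, the induction (\ref{IndHyp3D}) is closed recursively, now for the pair $[{\boldsymbol\sigma}_h^{m-1},c_h^{m-1}]$. Writing $\|{\boldsymbol\sigma}_h^{m-1}\|_{H^1}\le \|\xi_{\boldsymbol\sigma}^{m-1}\|_{H^1}+\|\mathbb{P}_{\boldsymbol\sigma}{\boldsymbol\sigma}^{m-1}\|_{H^1}$ and $\|c_h^{m-1}\|_{H^1}\le \|\xi_c^{m-1}\|_{H^1}+\|\mathbb{P}_c c^{m-1}\|_{H^1}$, the interpolant norms are controlled by $\|[{\boldsymbol\sigma},c]\|_{L^\infty(H^1)}$ via (\ref{aprox01-aNN}), while the discrete errors $\|\xi_{\boldsymbol\sigma}^{m-1}\|_{H^1}$ and $\|\xi_c^{m-1}\|_{H^1}$ are made $\le 1$ for $\Delta t,h$ small by feeding the just-proved estimate into the inverse inequality and distinguishing the regimes $\frac{(\Delta t)^{1/2}}{h}\le C$ and $\frac{(\Delta t)^{1/2}}{h}$ unbounded, exactly as in (\ref{j1})-(\ref{j2}). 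The hard part is conceptual rather than computational: the loss of the $L^4$ embedding is what dictates enlarging the induction hypothesis to include $c_h$ in $H^1$, because the coupling term $\gamma(c_h^{m-1}\xi_n^{m-1},\nabla\cdot\xi_{\boldsymbol\sigma}^m)$ cannot be absorbed without a uniform $L^6$ bound on $c_h^{m-1}$, and one must verify that this enlarged induction still closes.
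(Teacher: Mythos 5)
Your proposal is correct and follows essentially the same route as the paper: the modifications are localized to the bounds for $I_9$ in (\ref{Ea1e-new}) and $R_6+R_9$ in (\ref{Ea1e-newS}), where the 2D interpolation (\ref{in2D}) is replaced by (\ref{in3D}) together with $H^1\hookrightarrow L^6$, and the enlarged hypothesis (\ref{IndHyp3D}) supplies exactly the uniform $L^6$ control of $c_h^{m-1}$ (and ${\boldsymbol\sigma}_h^{m-1}$) needed to absorb the coupling term $\gamma(c_h^{m-1}\xi_n^{m-1},\nabla\cdot\xi_{\boldsymbol\sigma}^m)$. The remaining steps (Gronwall summation, the two-regime argument for (\ref{pre2}), and the recursive verification of the induction) are carried over unchanged, just as in the paper.
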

\begin{proof}
The proof follows as in Theorems \ref{theo1N} and \ref{theo1NCor}, but in this case, in order to bound the terms in the estimates (\ref{Ea1e-new}) and (\ref{Ea1e-newS}), we need to use the 3D interpolation inequality (\ref{in3D}) and the inductive hypothesis (\ref{IndHyp3D}), as follows: 
\begin{eqnarray*}
& I_9&\!\!\! = \chi (\xi^{m-1}_n {\boldsymbol{\sigma}}^{m-1}_h,\nabla \xi_n^m) - \chi(\theta^{m-1}_n \xi_{\boldsymbol{\sigma}}^{m-1},\nabla \xi_n^m)+ \chi(\theta^{m-1}_n \mathbb{P}_{\boldsymbol{\sigma}}{\boldsymbol{\sigma}}^{m-1},\nabla \xi_n^m) \nonumber\\
&&\!\!\! \leq \chi(\Vert \xi^{m-1}_{n}\Vert_{L^2}^{1/2}\Vert \xi^{m-1}_{n}\Vert_{H^1}^{1/2} \Vert{\boldsymbol{\sigma}}^{m-1}_h\Vert_{L^6}+ \Vert \xi_{\boldsymbol\sigma}^{m-1}\Vert_{L^2}   \Vert \theta_n^{m-1}\Vert_{L^\infty}  + \Vert \mathbb{P}_{\boldsymbol{\sigma}} \boldsymbol{\sigma}^{m-1}\Vert_{L^\infty}  \Vert \theta_n^{m-1}\Vert_{L^2}) \Vert \xi_n^m\Vert_{H^1}  \nonumber\\
&&\!\!\! \leq \displaystyle\frac{D_n}{12} \Vert \xi_n^m\Vert_{H^1}^2 +  \displaystyle\frac{D_n}{4} \Vert \xi^{m-1}_{n}\Vert_{H^1}^{2}+ \frac{C\chi^4}{D_n^3} \Vert \xi^{m-1}_{n}\Vert_{L^2}^{2} \nonumber\\
&&+\frac{C\chi^2}{D_n} \Vert n^{m-1}\Vert_{H^2}^2 \Vert \xi_{\boldsymbol{\sigma}}^{m-1}\Vert_{L^2}^2    + \displaystyle\frac{C\chi^2 }{D_n} h^{2(r_1 +1)} \Vert {\boldsymbol{\sigma}}^{m-1}\Vert_{H^2}^2   \Vert n^{m-1}\Vert_{H^{r_1 + 1}}^2,
\end{eqnarray*}
and 
\begin{eqnarray*}
& R_6 + R_{9}&\!\!\! =  ((\mathbb{P}_{\mathbf{u}} {\mathbf{u}}^{m-1} - \xi_{\mathbf{u}}^{m-1} )( \xi^{m-1}_{\boldsymbol{\sigma}}+\theta^{m-1}_{\boldsymbol{\sigma}}) +\gamma c^{m-1}_h\xi^{m-1}_{n}+ \gamma(\mathbb{P}_{c}{c}^{m-1} - \xi_{c}^{m-1})\theta^{m-1}_{n},\nabla \cdot\xi_{\boldsymbol\sigma}^m) \nonumber\\
&&\!\!\! \leq \displaystyle\frac{D_c}{6} \Vert \nabla \cdot\xi_{\boldsymbol\sigma}^m \Vert_{L^2}^2+ \frac{C}{D_c} \Vert \mathbb{P}_{\mathbf{u}} {\mathbf{u}}^{m-1}\Vert_{L^\infty}^2 (\Vert \xi^{m-1}_{\boldsymbol{\sigma}}\Vert_{L^2}^2+\Vert \theta^{m-1}_{\boldsymbol{\sigma}}\Vert_{L^2}^2)  +\displaystyle\frac{D_{\mathbf{u}}}{4\rho} \Vert \nabla \xi^{m-1}_{\mathbf{u}}\Vert_{L^2}^{2}  \nonumber\\
&&  +\frac{C\rho}{D_{\mathbf{u}}D_c^2} \Vert \xi^{m-1}_{\mathbf{u}}\Vert_{L^2}^{2}(\Vert \xi^{m-1}_{\boldsymbol{\sigma}}\Vert_{L^6}^{4}+\Vert \theta^{m-1}_{\boldsymbol{\sigma}}\Vert_{L^6}^{4})  + \displaystyle\frac{D_n}{8}\Vert \xi^{m-1}_{n}\Vert_{H^1}^{2} +\frac{C\gamma^4}{D_n D_c^2}\Vert \xi^{m-1}_{n}\Vert_{L^2}^{2} \Vert c^{m-1}_h\Vert_{L^6}^{4} \nonumber\\
&& +\displaystyle\frac{C\gamma^2}{D_c}\Vert \mathbb{P}_{c}{c}^{m-1}\Vert_{L^\infty}^2   \Vert \theta^{m-1}_n\Vert_{L^2}^2+\frac{C\gamma^2}{D_c} \Vert \theta_n^{m-1}\Vert_{L^\infty}^2  \Vert \xi^{m-1}_c\Vert_{L^2}^2\nonumber\\
&&\!\!\! \leq \displaystyle\frac{D_c}{6} \Vert \nabla \cdot\xi_{\boldsymbol\sigma}^m \Vert_{L^2}^2 + \displaystyle\frac{D_{\mathbf{u}}}{4\rho} \Vert \nabla \xi^{m-1}_{\mathbf{u}}\Vert_{L^2}^{2}+ \displaystyle\frac{D_n}{8}\Vert \xi^{m-1}_{n}\Vert_{H^1}^{2} +\frac{C\rho}{D_{\mathbf{u}}D_c^2} \Vert \xi^{m-1}_{\mathbf{u}}\Vert_{L^2}^{2}\nonumber\\
&&+ \displaystyle\frac{C}{D_c} (h^{2(r_3 +1)}\Vert {\boldsymbol{\sigma}}^{m-1}\Vert_{H^{r_3 + 1}}^2+\Vert \xi^{m-1}_{\boldsymbol{\sigma}}\Vert_{L^2}^2) \Vert \mathbf{u}^{m-1}\Vert_{H^2}^2  +\frac{C\gamma^4}{D_n D_c^2}\Vert \xi^{m-1}_{n}\Vert_{L^2}^{2}  \nonumber\\
&&+  \displaystyle\frac{C\gamma^2}{D_c}h^{2(r_1 +1)}\Vert {c}^{m-1}\Vert_{H^2}^2  \Vert n^{m-1}\Vert_{H^{r_1 + 1}}^2+\frac{C\gamma^2}{D_c} \Vert n^{m-1}\Vert_{H^2}^2  \Vert \xi^{m-1}_c\Vert_{L^2}^2.
\end{eqnarray*}
Finally, the inductive hypothesis (\ref{IndHyp3D}) can be verified in the same spirit of the two-dimensional case (see the proof of Theorem \ref{theo1N}).
\end{proof}
\section{Numerical simulations} 
In this section, we present two numerical experiments: the first one, is used to verify that our scheme gives a good approximation to chemotaxis phenomena in a liquid environment; and the second has been considered in order to check numerically the error estimates proved in our theoretical analysis. All the numerical results are computed by using the software Freefem++. We have considered the spaces for $\eta$, $c$, ${\boldsymbol\sigma}$, $\mathbf{u}$ and $\pi$, generated by $\mathbb{P}_1,\mathbb{P}_1,\mathbb{P}_{1},\mathbb{P}_1-bubble,\mathbb{P}_1$-continuous FE, respectively.\\

\vspace{0.05 cm}
\underline{Test 1:} In this experiment we consider the rectangular domain $\Omega=[0,2] \times [0,1],$ and the initial conditions
\begin{eqnarray*}
\eta_0 &=& \sum_{i=1}^3  \Big(80 \, \text{exp} (-8(x-s_i)^2-10(y-1)^2)\Big),\\
c_0 &=& \, 100 \, \text{exp} (-5(x-1)^2-5(y-0.5)^2),\\
\mathbf{u}_0&=&{\bf 0},
\end{eqnarray*}
where $s_1=0.2$, $s_2=0.5$ and $s_3=1.2$. The numerical solution is computed with mesh parameter $h = 1/40$ and time step $ \Delta t = 1e-5$. Additionally,  we consider the parameters values $\chi=8,$ $D_c=5,$ $\gamma=8,$ $D_{\mathbf{u}}=10$, $D_n=\rho=1$ and $\phi(x,y)=-1000y$. We show the simulations results for the times $t = 0$, $t = 12e -5$ and $t = 30e -5$. The evolution results for the cell density and chemical signal are presented in Figure \ref{fig:NC1}, while the evolution of the velocity field is shown in Figure \ref{fig:U1}. Initially, the cells are in two clusters in the upper part of our domain, and we observe that they begin to orient their movement in the direction of greater concentration of the chemical signal (in this case, the center of the domain). We can see how the clusters of organisms generate a kind of bridge between them and after, we see how organisms tend to agglomerate in the center of the rectangle. Moreover, we can see that the attraction signal $c_h$ is consumed by the organisms, and  some changes in the velocity field are evidenced, influenced by the movement of the cells.

\bigskip

\begin{minipage}{\textwidth}
\begin{tabular}{ccc}
\includegraphics[width=60mm]{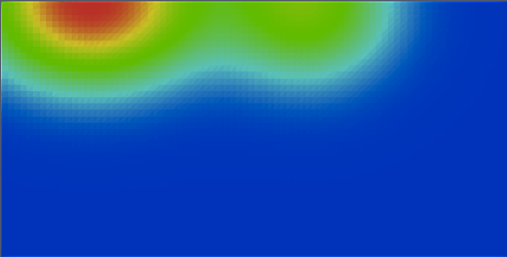} && \includegraphics[width=60mm]{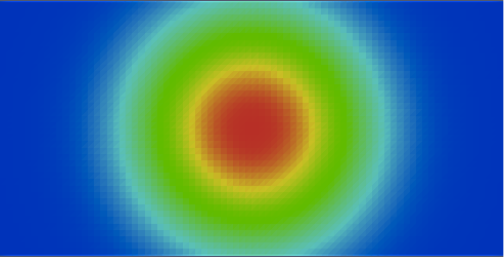}  \\[1mm]
(a) Discrete cell density at time t=0 &&  (b) Discrete cell density at time t=0 \\[2mm]
\includegraphics[width=60mm]{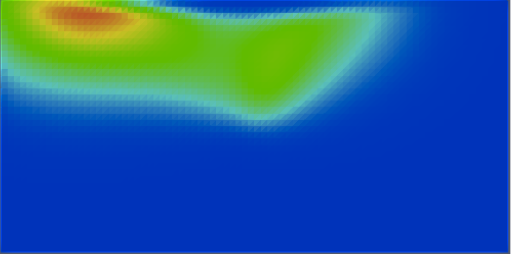} && \includegraphics[width=60mm]{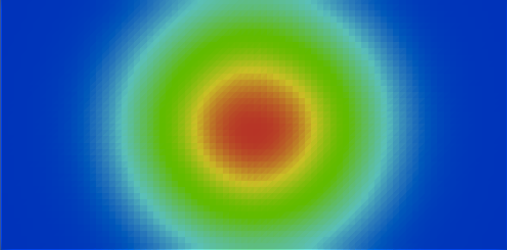}  \\[1mm]
(c) Discrete cell density at time t=12e-5 &&  (d) Discrete chemical signal  at time t=12e-5 \\[2mm]
\includegraphics[width=60mm]{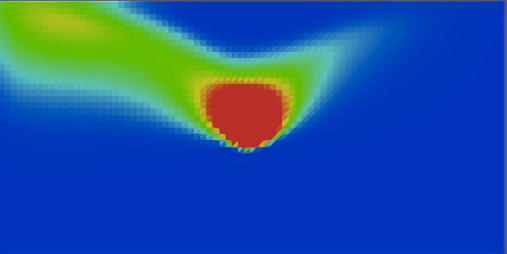} && \includegraphics[width=60mm]{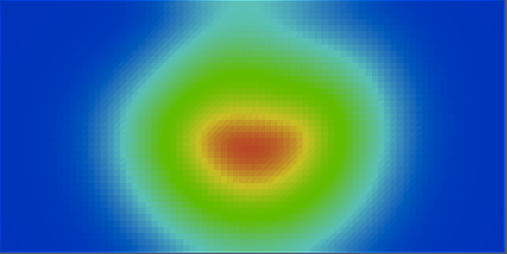}  \\[1mm]
(e) Discrete cell density at time t=30e-5 &&  (f) Discrete chemical signal  at time t=30e-5 \\[-2mm]
\end{tabular}
\figcaption{Cell density vs Chemical concentration.} \label{fig:NC1}
\end{minipage}

%\begin{figure}[htbp] 
%	\centering 
%	\subfigure[Discrete cell density at time t=0]{\includegraphics[width=60mm]{n0}} \hspace{1cm}
%\subfigure[Discrete chemical signal at time t=0]{\includegraphics[width=60mm]{c0}} 
%	\subfigure[Discrete cell density at time t=12e-5]{\includegraphics[width=60mm]{n1}}  \hspace{1cm}
%	\subfigure[Discrete chemical signal  at time t=12e-5]{\includegraphics[width=60mm]{c1}} 
%		\subfigure[Discrete cell density at time t=30e-5]{\includegraphics[width=60mm]{n2}} \hspace{1cm}
%	\subfigure[Discrete chemical signal  at time t=30e-5]{\includegraphics[width=60mm]{c2}} 
%	\caption{Cell density vs Chemical concentration.} \label{fig:NC1}
%\end{figure}
%\end{minipage}

\begin{figure}[htbp] 
	\centering 
	\subfigure[Discrete velocity field at time t=1e-5]{\includegraphics[width=60mm]{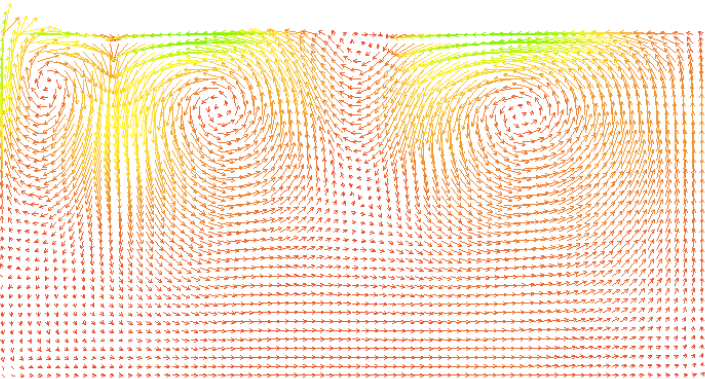}} \hspace{1cm}
	\subfigure[Discrete velocity field at time t=12e-5]{\includegraphics[width=60mm]{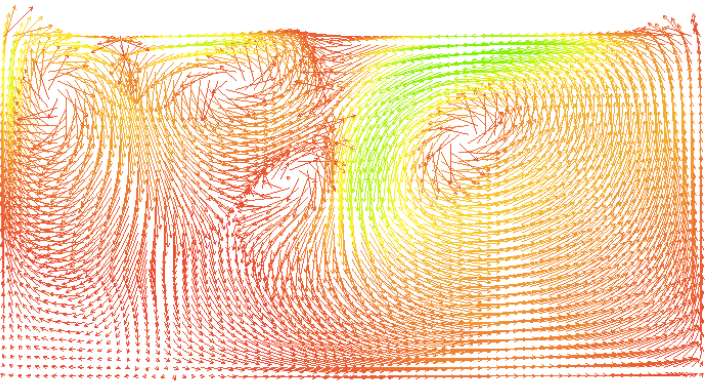}} 
	\subfigure[Discrete velocity field at time t=30e-5]{\includegraphics[width=60mm]{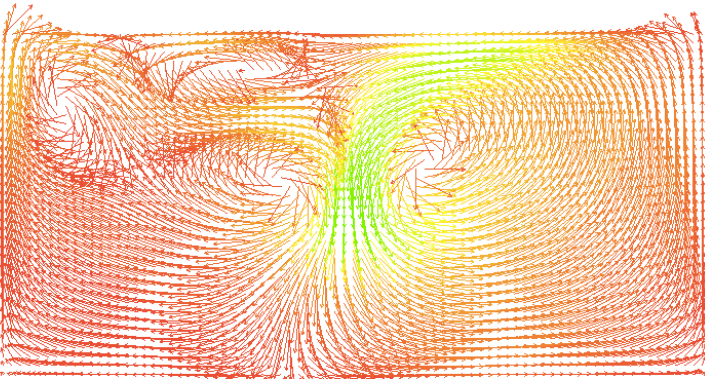}}  \hspace{1cm}
	\caption{Evolution of the velocity field of the fluid.} \label{fig:U1}
\end{figure}

%\newpage

\vspace{0.4 cm}
\underline{Test 2 (Convergence rates):} In this experiment we take $\Omega=[0,1]\times [0,1]$ and we consider the exact solutions $\eta=e^{-t}(cos(2\pi x) +cos(2\pi y) +3)$, $c=e^{-t}(sin(2\pi y)+cos(2\pi x) - 2\pi y +9)$, ${\boldsymbol\sigma}=\nabla c= 2\pi e^{-t} (-sin(2\pi x), cos(2\pi y)-1)$, $\mathbf{u}=e^{-t}(sin(2 \pi y)(-cos(2\pi x + \pi)-1),sin(2 \pi x)(cos(2\pi y + \pi)+1))$ and $\pi=e^{-t}(cos(2\pi x) + sin(2 \pi y))$; and all parameters in (\ref{scheme1}) equal to 1. Note that $\mathbf{u}={\bf 0}$ and $\frac{\partial c}{\partial \boldsymbol{\nu}}=\frac{\partial \eta}{\partial \boldsymbol{\nu}}=0$ on $\partial\Omega$, $\nabla \cdot \mathbf{u}=0$ in $\Omega$ and $\int_{\Omega} \pi =0$. Moreover, we use a uniform partition with $k+1$ nodes in each direction.

Numerical results are presented in Tables \ref{T1}-\ref{T5} for $\Delta t=2e-4$ with respect to time $T=0.01$. We observe the second-order convergence for the total errors in $e^m_{\eta},e^m_{c}, e^m_{\mathbf{u}}$ in $l^{\infty}(L^2)$-norm, and the first-order convergence for  $e^m_{\eta},e^m_{c}, e^m_{\mathbf{u}}$ in $l^{2}(H^1)$-norm and $e^m_{\mathbf{u}}$ in $l^{\infty}(H^1)$-norm, which is
agreement with our theoretical analysis.

\bigskip

\begin{minipage}{\textwidth}
%\begin{table}[h] 
	\begin{footnotesize} 
		\begin{center}
			\begin{tabular}{|| c | c | c | c | c||}
				\hline
				\hline
				$k \times k$ & $\Vert \eta(t_m) - \eta^m_h \Vert_{l^\infty(L^2)}$ & Order & $\Vert \eta(t_m) - \eta^m_h \Vert_{l^2(H^1)}$ & Order  \\
				\hline
				$10 \times 10$ & $5.7265 \times 10^{-2}$  & -   & $1.1682 \times 10^{-1}$  & - \\ 
				\hline
				$20 \times 20$ &  $1.4350 \times 10^{-2}$ & 1.9966 & $5.7520 \times 10^{-2}$ & 1.0223 \\      
				\hline
				$30 \times 30$  &  $6.3060 \times 10^{-3}$   & 2.0279 & $3.8242 \times 10^{-2}$ & 1.0067 \\       
				\hline
				$40 \times 40$  &  $3.4829 \times 10^{-3}$ & 2.0635 & $2.8656 \times 10^{-2}$ & 1.0031 \\
				\hline
				$50 \times 50$  & $2.1757 \times 10^{-3}$   & 2.1085  &  $2.2915 \times 10^{-2}$  & 1.0017 \\    
				\hline
				\hline
			\end{tabular}
		\tabcaption{Convergence rates in space for $\eta$.} 
		\label{T1} 
			\end{center}
	\end{footnotesize} 
%\end{table} 
\end{minipage}

\begin{minipage}{\textwidth}
%\begin{table}[h] 
	\begin{footnotesize} 
		\begin{center}
			\begin{tabular}{|| c | c | c | c | c||}
				\hline
				\hline
				$k \times k$ & $\Vert c(t_m) - c^m_h \Vert_{l^\infty(L^2)}$ & Order & $\Vert c^m_h - c^m_h \Vert_{l^2(H^1)}$ & Order  \\
				\hline
				$10 \times 10$ & $3.5731 \times 10^{-2}$  & -   & $1.1338 \times 10^{-1}$  & - \\ 
				\hline
				$20 \times 20$ &  $8.9904 \times 10^{-3}$ & 1.9907 & $5.7106 \times 10^{-2}$ & 0.9895 \\      
				\hline
				$30 \times 30$  &  $4.0004 \times 10^{-3}$   & 1.9971 & $3.8126 \times 10^{-2}$ & 0.9964 \\       
				\hline
				$40 \times 40$  &  $2.2512 \times 10^{-3}$ & 1.9986 & $2.8610 \times 10^{-2}$ & 0.9981 \\
				\hline
				$50 \times 50$  & $1.4410 \times 10^{-3}$   & 1.9991  &  $2.2894 \times 10^{-2}$  & 0.9989 \\    
				\hline
				\hline
			\end{tabular}
		\tabcaption{Convergence rates in space for $c$.} 
		\label{T2} 
			\end{center}
	\end{footnotesize} 
%\end{table} 
\end{minipage}

\begin{minipage}{\textwidth}
%\begin{table}[h!] 
	\begin{footnotesize} 
		\begin{center}
			\begin{tabular}{|| c | c | c | c | c||}
				\hline
				\hline
				$k \times k$ & $\Vert \mathbf{u}_1(t_m) - (\mathbf{u}_1)^m_h \Vert_{l^\infty(L^2)}$ & Order & $\Vert\mathbf{u}_1(t_m) - (\mathbf{u}_1)^m_h \Vert_{l^2(H^1)}$ & Order  \\
				\hline
				$10 \times 10$ & $4.1118 \times 10^{-2}$  & -   & $1.5654 \times 10^{-1}$  & - \\ 
				\hline
				$20 \times 20$ &  $1.0106 \times 10^{-2}$ & 2.0245 & $7.7874 \times 10^{-2}$ & 1.0074 \\      
				\hline
				$30 \times 30$  &  $4.4569 \times 10^{-3}$   & 2.0192 & $5.1820 \times 10^{-2}$ & 1.0045 \\       
				\hline
				$40 \times 40$  &  $2.4902 \times 10^{-3}$ & 2.0234 & $3.8827 \times 10^{-2}$ & 1.0034 \\
				\hline
				$50 \times 50$  & $1.5822 \times 10^{-3}$   & 2.0324  &  $3.1043 \times 10^{-2}$  & 1.0027 \\    
				\hline
				\hline
			\end{tabular}
		\tabcaption{Convergence rates in space for $\mathbf{u}_1$ in weak norms.} 
		\label{T3} 
			\end{center}
	\end{footnotesize} 
%\end{table} 
\end{minipage}

\begin{minipage}{\textwidth}
%\begin{table}[h] 
	\begin{footnotesize} 
		\begin{center}
			\begin{tabular}{|| c | c | c | c | c||}
				\hline
				\hline
				$k \times k$ & $\Vert \mathbf{u}_2(t_m) - (\mathbf{u}_2)^m_h \Vert_{l^\infty(L^2)}$ & Order & $\Vert\mathbf{u}_2(t_m) - (\mathbf{u}_2)^m_h \Vert_{l^2(H^1)}$ & Order  \\
				\hline
				$10 \times 10$ & $4.1175 \times 10^{-2}$  & -   & $1.5655 \times 10^{-1}$  & - \\ 
				\hline
				$20 \times 20$ &  $1.0125 \times 10^{-2}$ & 2.0238 & $7.7875 \times 10^{-2}$ & 1.0075 \\      
				\hline
				$30 \times 30$  &  $4.4658 \times 10^{-3}$   & 2.0190 & $5.1821 \times 10^{-2}$ & 1.0046 \\       
				\hline
				$40 \times 40$  &  $2.4952 \times 10^{-3}$ & 2.0232 & $3.8827 \times 10^{-2}$ & 1.0034 \\
				\hline
				$50 \times 50$  & $1.5855 \times 10^{-3}$   & 2.0322  &  $3.1043 \times 10^{-2}$  & 1.0027 \\    
				\hline
				\hline
			\end{tabular}
		\tabcaption{Convergence rates in space for $\mathbf{u}_2$ in weak norms.} 
		\label{T4} 
			\end{center}
	\end{footnotesize} 
%\end{table} 
\end{minipage}

\begin{minipage}{\textwidth}
%\begin{table}[h] 
	\begin{footnotesize} 
		\begin{center}
			\begin{tabular}{|| c | c | c | c | c||}
				\hline
				\hline
				$k \times k$ & $\Vert \mathbf{u}_1(t_m) - (\mathbf{u}_1)^m_h \Vert_{l^{\infty}(H^1)}$ & Order & $\Vert\mathbf{u}_2(t_m) - (\mathbf{u}_2)^m_h \Vert_{l^{\infty}(H^1)}$ & Order  \\
				\hline
				$10 \times 10$ & $2.3353 $  & -   & $2.3353$  & - \\ 
				\hline
				$20 \times 20$ &  $1.1882$ & 0.9747 & $1.1882$ & 0.9747 \\      
				\hline
				$30 \times 30$  &  $7.9477 \times 10^{-1}$   & 0.9920 & $7.9477 \times 10^{-1}$ & 0.9920\\       
				\hline
				$40 \times 40$  &  $5.9675 \times 10^{-1}$ & 0.9960 & $5.9675 \times 10^{-1}$ & 0.9960 \\
				\hline
				$50 \times 50$  & $4.7765 \times 10^{-1}$   & 0.9976  &  $4.7765 \times 10^{-1}$  & 0.9976 \\    
				\hline
				\hline
			\end{tabular}
		\tabcaption{Convergence rates in space for $\mathbf{u}_1$ and $\mathbf{u}_2$ in strong norms.} 
		\label{T5} 
			\end{center}
	\end{footnotesize} 
%\end{table} 
\end{minipage}

\end{document}